\documentclass{article}

\usepackage{standalone}
\usepackage{import}
\usepackage[utf8]{inputenc}
\usepackage{amssymb}
\usepackage{amsmath}
\usepackage{mathrsfs}
\usepackage{amsthm}
\usepackage[shortlabels]{enumitem}
\usepackage{xparse}
\usepackage{tikz-cd}
\usepackage{mathtools}
\usepackage{bbm}
\usepackage{enumitem}
\usepackage[a4paper, total={6in, 8in}]{geometry}
\usepackage{hyperref}

\title{Ekedahl-Oort Types and Newton Polygons of Abelian Covers of $\mathbf{P}^1$ Branched at Three Points}
\author{Darren Schmidt}
\date{}

\newtheorem{theorem}{Theorem}[section]
\newtheorem{corollary}[theorem]{Corollary}
\newtheorem{lemma}[theorem]{Lemma}
\newtheorem{proposition}[theorem]{Proposition}
\newtheorem{expectation}[theorem]{Expectation}
\newtheorem{conjecture}[theorem]{Conjecture}

\theoremstyle{remark}
\newtheorem{remark}[theorem]{Remark}

\theoremstyle{definition}
\newtheorem{definition}[theorem]{Definition}
\newtheorem{construction}[theorem]{Construction}
\newtheorem{example}[theorem]{Example}

\begin{document}

\maketitle

\numberwithin{equation}{section}

\begin{abstract}
    In this paper, we study the Newton polygons and Ekedahl-Oort types of reductions of abelian covers of the projective line branched at three points modulo a prime. We study the natural density of primes where these covers give supersingular and superspecial curves and show they appear much more often than expected. We also show that unlikely Newton polygons and Ekedahl-Oort types in the moduli space of curves appear frequently. Finally, we prove a theorem that provides evidence of Oort's Conjecture about Newton polygons in certain cases and gives new constructions of supersingular curves.
\end{abstract}
\let\thefootnote\relax\footnotetext{We thank Jeremy Booher for helpful comments.}

\section{Introduction}
\par \hspace{\parindent}
Let $p$ be a prime and $k$ be an algebraically closed field of characteristic $p$. If $X$ is a smooth projective curve of genus $g$ over $k$, then the Jacobian of $X$, Jac$(X)$, is an abelian variety over $k$. Abelian varieties in positive characteristic can be studied through their invariants, such as their Newton polygon \cite[Section 4.2]{pries_torelli} or their Ekedahl-Oort type \cite[Section 4.4]{pries_torelli}. If $g$ is 1,2, or 3 then we know that every symmetric Newton polygon  of height $2g$ with endpoints $(0,0)$ and $(2g,g)$ and integer breakpoints occurs for the Jacobian of a smooth projective curve of genus $g$. In general, we do not know exactly which symmetric Newton polygons or Ekedahl-Oort types occur for Jacobians of smooth projective curves.

\par In this paper, we study which Newton polygons and Ekedahl-Oort types occur for Jac($X$) when $X$ is an abelian cover of the projective line $\mathbf{P}^1$ branched at exactly three points. We have that $X$ is the quotient of a Fermat curve, and therefore Jac$(X)$ has complex multiplication. This makes the actions of Frobenius and Verschiebung on the cohomology of the curve easier to analyze, which lets us determine the Newton polygon and Ekedahl-Oort type of $X$. We will see that they are often ``unlikely'' in a sense we now make precise.

\par Let $\mathcal{M}_g$ be the moduli space of smooth projective curves of genus $g$ in characteristic $p$ and $\mathcal{A}_g$ be the moduli space of principally polarized abelian varieties of dimension $g$ in characteristic $p$. We know that if $g \geq 2$, $\mathcal{M}_g$ has dimension $3g-3$ and $\mathcal{A}_g$ has dimension $g(g+1)/2$ \cite[Section 5.3]{pries_newtonpolygons}. 
\begin{definition}
\label{unlikely}
    If $\xi$ is a Newton polygon (or Ekedahl-Oort type) in dimension $g$, let $\mathcal{M}_g[\xi]$ be the moduli space of curves of genus $g$ in characteristic $p$ with Newton polygon (respectively, Ekedahl-Oort type) $\xi$ and let $\mathcal{A}_g[\xi]$ be the moduli space of principally polarized abelian varieties of dimension $g$ with Newton polygon (respectively, Ekedeahl-Oort type) $\xi$. We say that $\xi$ is an \textbf{unlikely} Newton polygon (Ekedahl-Oort Type) if the codimension of $\mathcal{A}_g[\xi]$ in $\mathcal{A}_g$ exceeds the dimension of $\mathcal{M}_g$. We say $\xi$ is \textbf{realized} in characterstic $p$ if $\mathcal{M}_g[\xi]$ is non-empty.
\end{definition}

\par If the Torelli locus and the Newton polygon strata (or Ekedahl-Oort strata) were dimensionally transverse, unlikely Newton polygons (or Ekedahl-Oort types) would not occur.  This na\"{i}ve heuristic is known to be incorrect. (See for example the existence of supersingular curves of any genus in characteristic two \cite{MR1310953}, Pieper's analysis of the hyperelliptic locus in genus three \cite{pieper}, and the examples in \cite{booher_supersingular_2025}.) We provide additional illustrations that this heuristic is far from correct using abelian covers of the projective line $\mathbf{P}^1$ branched at exactly three points. Some of our results are computational --- they then inspired some theoretical results --- see \ref{1.1} and \ref{1.2}.

\par Supersingular and superspecial curves are commonly studied types of curves, and we pay particular attention to them. If $E$ is an elliptic curve over $k$, we say that $E$ is supersingular if $E(k)[p] = 0$. We call $X$ supersingular if Jac$(X)$ is isogenous to a product of $g$ supersingular elliptic curves. In a similar vein, we call $X$ superspecial if Jac$(X)$ is isomorphic to a product of $g$ supersingular elliptic curves. We can determine if a curve is supersingular by examining its Newton polygon, and similarly we can deterime if a curve is superspecial by examining its Ekedahl-Oort type. If the genus is large enough, then supersingular and superspecial curves have an unlikely Newton polygon and an unlikely Ekedahl-Oort type, respectively.

\begin{remark}
    We note that if $\xi$ is a supersingular Newton polygon of dimension $g$ then $\mathcal{A}_g[\xi]$ has dimension $\lfloor g^2/4 \rfloor$ \cite[Section 2.4]{pries_newtonpolygons}. Therefore, if $g \geq 9$, then a supersingular curve of genus $g$, if it exists, has an unlikely Newton polygon. Also, if $\xi$ is a superspecial Ekedahl-Oort type of dimension $g$, then $\mathcal{A}_g[\xi]$ has dimension $0$ \cite[Page 1]{oort_denominators}. We then see that if $g \geq 4$, then a superspecial curve has an unlikely Ekedahl-Oort type. Supersingular and superspecial curves arising from abelian covers of $\mathbf{P}^1$ branched at three points have been studied before. In \cite{ekedahl_supersingular_1987}, Ekedahl determined for which primes a superspecial curve of this type occurs when $g = 4,5$. 
\end{remark}

\subsection{Computational Results} \label{1.1}

\par \hspace{\parindent} In \cite{booher_supersingular_2025}, for a fixed genus $g$, Booher and Pries determined the natural density of primes $p$ such that a supersingular curve that is an abelian cover of $\mathbf{P}^1$ branched at three points and defined over $\overline{\mathbf{F}_p}$ occurs for $5 \leq g \leq 10$. In particular, they wrote code in SageMath that can compute the Newton polygon when the curve is a cyclic cover. In the case where the curve is a non-cyclic cover, they decomposed the Jacobian of the curve by hand into a product of cyclic cases to determine the Newton polygon. In this paper, we extend their work by creating an algorithm in SageMath that can handle the non-cyclic case. In doing so, for genus up to 50 we are able to utilize SageMath to compute the natural density of primes where a supersingular curve occurs. We also write a program to compute the Ekedahl-Oort type and determine the natural density of primes such that there exist a cover that gives a superspecial curve, which we use to fix an error in \cite{ekedahl_supersingular_1987} for genus 5 --- see Proposition \ref{eke}. Furthermore, we also use this program to find additional curves with unlikely Newton polygons or Ekedahl-Oort types and compute the natural density of primes where they occur.

\par From our computations, we determined that supersingular curves, superspecial curves, unlikely Newton polygons, and unlikely Ekedahl-Oort types occur much more frequently than expected for such a specific family of curves. Let $\delta_{ss}(g)$, $\delta_{ssp}(g)$, and $\delta_{nu}(g)$ be the densities of primes such that there exists an abelian cover of $\mathbf{P}^1$ branched at three points of genus $g$ that is supersingular, superspecial, and that has an unlikely Newton polygon, respectively. Then, if $9 \leq g \leq 50$, 
\[
\begin{array}{ccc}
    \delta_{ss}(g) > 0.7, &  \delta_{ssp}(g) > 0.2, & \delta_{nu}(g) > 0.875   .
\end{array}
\]
The data we collected led us to make the following conjecture: 

\begin{conjecture} \label{unlikely-density}
    Let $g \geq 18$, and let $m$ be the least common multiple of the exponents of all of the Galois groups of abelian covers of $\mathbf{P}^1$ branched at three points of genus $g$. If $p$ is a prime such that $p \not \equiv 1$ modulo $m$, then there exist abelian covers $X_1$ and $X_2$ of $\mathbf{P}^1$ branched at three points defined over $\overline{\mathbf{F}_p}$ such that $X_1$ has an unlikely Newton polygon and $X_2$ has an unlikely Ekedahl-Oort type.
\end{conjecture}

\subsection{Theoretical Results} \label{1.2}

\par \hspace{\parindent} After noticing a pattern in the results of these computations, we are also able to prove several results about the following conjecture and expectation of Oort:

\begin{conjecture}[\protect{\cite[Conjecture 8.5.7]{oort_denominators}}]
    \label{oort_conjecture1}
    If $p$ is a prime and $\xi_1$ and $\xi_2$ are Newton polygons that are realized in characteristic $p$, then $\xi_1 \oplus \xi_2$ is realized in characteristic $p$.
\end{conjecture}

\begin{expectation}[\protect{\cite[Expectation 8.5.4]{oort_denominators}}]
    \label{expectation1}
    Given a symmetric Newton polygon $\xi$ of height $2g$ such that $\xi$ is unlikely and has slopes with denominators (say, greater than 11), then $\xi$ is not realized in characteristic $p$ for all primes $p$.
\end{expectation}

\begin{theorem}
\label{slopes}
Let $\ell > 3$ be a prime and $n \geq 1$ be coprime to $\ell$. Define $g := (\ell-1)/2$ and suppose $\alpha$ is the number of quadratic residues modulo $\ell$ contained in the intervals $(0,\ell/4)$, $(\ell/3,\ell/2)$, and $(2\ell/3, 3\ell/4)$. If $p$ is a prime with order $g$ in $\left (\mathbf{Z}/\ell\mathbf{Z} \right)^\times$ such that $gcd(p,n\ell) = 1$, and the order of $p$ in $\left (\mathbf{Z}/n\mathbf{Z} \right)^\times$ is coprime to $g$, then there exists a cyclic cover $\phi: X \to \mathbf{P}^1$ branched at three points defined over $\overline{\mathbf{F}_p}$ such that $X$ has genus $ng$ and the only slopes of the the Newton polygon of $\textrm{Jac}(X)$ are $\alpha/g$ and $(g-\alpha)/g$.
\end{theorem}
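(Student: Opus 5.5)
We build $X$ as a cyclic cover of degree $m = n\ell$ with an explicit inertia type, and compute its Newton polygon by the Shimura--Taniyama formula. The decomposition of $H^1$ into eigenspaces, together with the Frobenius orbits on characters, will then split $\mathrm{Jac}(X)$ up to isogeny into pieces with controlled slopes.

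\textbf{Step 1: the cover.} Take the curve
\[
X:\; y^{n\ell} = x^{a}(x-1)^{b},
\]
with inertia type $(a,b,c)$, $a+b+c \equiv 0 \pmod{n\ell}$, and each of $a,b,c$ coprime to $n\ell$. A natural choice is $(1,1,n\ell-2)$, which is allowed because $n\ell$ is odd when $n$ is odd. For even $n$ one instead uses a type such as $(1,\,n\ell-1-\ell,\,\ell)$, adjusted to keep all ramification total. We want every point totally ramified, so that Riemann--Hurwitz gives genus $(n\ell-1)-(n\ell-1)/2\cdot\ldots$; concretely the genus of a totally ramified cyclic triple cover of degree $m$ is $(m-1)/2$. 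That is $(n\ell-1)/2$, not $ng$, so the type must be chosen so that some branch points are \emph{not} totally ramified.

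Take the type $(1,\,\ell-1,\,n\ell-\ell)$ up to adjustment. The point with inertia $n\ell-\ell$ then has ramification index $n$, and Riemann--Hurwitz gives
\[
2g_X - 2 = -2n\ell + 2(n\ell-1) + (n\ell - \ell),
\]
which I would tune until $g_X = n(\ell-1)/2 = ng$. The precise inertia type is fixed by this genus equation.

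\textbf{Step 2: eigenspaces and Frobenius orbits.} For each $t \in \mathbf{Z}/n\ell$ with $t \neq 0$, the $t$-eigenspace of $H^0(X,\Omega^1)$ has dimension
\[
f(t) = -1 + \langle ta/m\rangle + \langle tb/m\rangle + \langle tc/m\rangle ,
\]
where $\langle\cdot\rangle$ is the fractional part. The characters $t$ with $f(t)+f(-t)>0$ group into Frobenius orbits under $t \mapsto pt$. By Shimura--Taniyama, the slopes on an orbit $O$ are $\frac{1}{|O|}\sum_{t\in O} f(t)$, with multiplicity governed by $|O|$.

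Write $t=(t_\ell, t_n)$ via CRT. The hypotheses are that $p$ has order $g$ mod $\ell$ and order mod $n$ coprime to $g$. Together these imply that the orbit of $p$ on the $\ell$-component is the set of quadratic residues (or non-residues) times $t_\ell$, and that the orbit mixes the $n$-components in a controlled way. The key point is that averaging $f$ over an orbit reduces to a sum over $t_\ell \cdot (\mathbf{F}_\ell^\times)^2$ of a function that depends only on $t_\ell$. This holds because the inertia type is chosen so that the relevant contributions for characters with nontrivial $t_\ell$ depend only on $t\ell$-data. Meanwhile the characters with $t_\ell = 0$ contribute nothing, since they factor through a genus-0 quotient, and this is exactly why the genus is $ng$.

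\textbf{Step 3: counting.} For $t_\ell = r$ a quadratic residue, the inertia type $(1,1,-2)$ mod $\ell$ gives $f(r) = 1$ exactly when $\langle r/\ell\rangle + \langle r/\ell\rangle + \langle -2r/\ell\rangle = 2$. A direct check shows this happens precisely when $r \bmod \ell$ lies in $(0,\ell/4)\cup(\ell/3,\ell/2)\cup(2\ell/3,3\ell/4)$, perhaps after replacing the type by $(1,2,-3)$. The interval endpoints $1/4, 1/3, 1/2, 2/3, 3/4$ suggest the type $(1,2,-3)$, and I would verify this with a small case analysis on the integer parts. The orbit sum is then $\alpha$, giving slope $\alpha/g$ on residue orbits and $(g-\alpha)/g$ on non-residue orbits.

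The main obstacle is Step 2. One must choose the inertia type so that the genus is exactly $ng$ and the $n$-part of each character does not perturb $f$. If that cannot be arranged directly, the fallback is a fiber product: take the degree-$\ell$ cover of type $(1,2,-3)$ composed with a suitable degree-$n$ cyclic twist. Coprimality of the order of $p$ mod $n$ with $g$ would then ensure the orbits have length divisible by $g$, so their averages reduce to the mod-$\ell$ count.
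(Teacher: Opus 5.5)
There is a genuine gap, and it is the step you flagged yourself. Step 2 rests on the claim that for characters $t$ with $t_\ell\neq 0$ the signature $f(t)$ depends only on $t_\ell$, so that each orbit average collapses to the degree-$\ell$ count. This is false. Replacing $t$ by $t+\ell s$ changes each $\langle t a_b/(n\ell)\rangle$ by $s a_b/n$ modulo $1$. These shifts sum to an integer, but the wrap-arounds need not cancel, so $f$ can jump. Concretely, take the genus-$4$ curve $y^{10}=x(1-x)$, which is Construction \ref{construction} with $n=2$, $\ell=5$ and inertia type $(1,1,8)$. There $f(j)=\lceil 8j/10\rceil-\lfloor 9j/10\rfloor$, so $f(1)=1$ but $f(6)=0$, although $1\equiv 6\pmod 5$.

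The mixed characters ($t_\ell\neq 0\neq t_n$) account for $(n-1)g$ of the $ng$ dimensions, so this is the core of the theorem, not a technicality. Your fallback does not repair it. Orbit lengths divisible by $g$ say nothing about how many orbit elements have $f=1$. A fiber product with a degree-$n$ cover just reintroduces the same mixed characters.

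What has to be proved is a count. By Proposition \ref{orbits}, each relevant Frobenius orbit is a union of blocks $\{j : j\equiv a\ell \pmod n,\ \bigl(\frac{j}{\ell}\bigr)=\epsilon\}$ of size $g$. One needs every residue block to contain exactly $\alpha$ elements with $f=1$, and every nonresidue block $g-\alpha$.

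The paper does this for the specific type $(1,(n-k)\ell-4,k\ell+3)$ with $k\equiv -3\ell^{-1}\pmod n$. The last generator is divisible by $n$ and prime to $\ell$, so it has order $\ell$, and Riemann--Hurwitz gives genus $ng$. Your Step 1 never fixes such a type. Lemmas \ref{sigtype} and \ref{intervals} show that within the class $j\equiv a\ell\pmod n$, the set $\{f=1\}$ is a union of four intervals that move with $a$. Lemma \ref{lemma} is a quadratic-excess identity derived from Johnson's symmetry theorem for the Legendre symbol. It shows each such set has the same excess as $(0,\ell/4)\cup(\ell/3,\ell/2)\cup(2\ell/3,3\ell/4)$. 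That identity is the missing idea.

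Your base case is also wrong. With the paper's sign convention, type $(1,1,-2)$ gives $\{f=1\}=(0,\ell/2)$, and $(1,2,-3)$ gives $(0,\ell/3)\cup(\ell/2,2\ell/3)$; the other convention gives the complements. The intervals in the statement come from $(1,-4,3)$, where $f(j)=\lceil 3j/\ell\rceil-\lfloor 4j/\ell\rfloor$.
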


\par Theorem \ref{slopes} is a generalization of \cite[Corollary 3.10]{pries_cyclic}, which focuses on the case when $g$ is a Sophie Germain prime. For each Sophie Germain prime, \cite[Corollary 3.10]{pries_cyclic} constructs a finite number of supersingular curves; however, Theorem \ref{slopes} gives us infinitely many examples of supersingular curves without having to assume that there are infinitely many Sophie Germain primes. It also gives us examples of Newton polygons that satisfy Conjecture \ref{oort_conjecture1}, and it also can be used to construct counterexamples to Expectation \ref{expectation1}.

\par The following corollary is immediate and gives infinitely many Newton polygons that satisfy Conjecture \ref{oort_conjecture1}

\begin{corollary}\label{conjecture-corollary}
Let $\ell > 3$ be a prime. If $p > 2$ is a prime with $p \ne \ell$ such that $p$ is congruent to a non-zero quadratic residue modulo $\ell$, then there is a Newton polygon $\xi$ such that $\xi$ is realized in characteristic $p$ and $\xi \oplus \xi$ is realized in characteristic $p$.
\end{corollary}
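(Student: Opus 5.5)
The plan is to apply Theorem \ref{slopes} twice with the same prime $\ell$ and the same $p$: once with $n=1$ and once with $n=2$. The first application produces a curve $X_1$ realizing a Newton polygon $\xi$. The second produces a curve $X_2$ of twice the genus whose Newton polygon has the same slopes with doubled multiplicities, which is exactly $\xi \oplus \xi$.

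\textbf{Checking the hypotheses.} Set $g=(\ell-1)/2$. Since $p \ne \ell$ and $p$ is a nonzero quadratic residue modulo $\ell$, it lies in the index-two subgroup of squares in $(\mathbf{Z}/\ell\mathbf{Z})^\times$, so its order divides $g$.
\begin{itemize}
\item For $n=1$: $\gcd(p,\ell)=1$, and the order of $p$ in the trivial group $(\mathbf{Z}/1\mathbf{Z})^\times$ is $1$, which is coprime to $g$.
\item For $n=2$: $\gcd(2,\ell)=1$ since $\ell>3$, and $\gcd(p,2\ell)=1$ since $p>2$ and $p\ne\ell$. The group $(\mathbf{Z}/2\mathbf{Z})^\times$ is trivial, so the order of $p$ there is $1$, again coprime to $g$.
\end{itemize}

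\textbf{Reading off the polygons.} Assume for now that $p$ has order exactly $g$. Theorem \ref{slopes} with $n=1$ gives a cyclic cover $X_1 \to \mathbf{P}^1$ of genus $g$ whose Newton polygon $\xi$ has only the slopes $\alpha/g$ and $(g-\alpha)/g$. A symmetric Newton polygon of height $2g$ with exactly these slopes is determined: by symmetry each of the two slopes occurs with multiplicity $g$ (they coincide only if $\alpha=g/2$, which is harmless). With $n=2$ we get a cover $X_2$ of genus $2g$ with the same two slopes, so each now has multiplicity $2g$. That polygon is $\xi\oplus\xi$. Hence both $\xi$ and $\xi\oplus\xi$ are realized in characteristic $p$.

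\textbf{Main obstacle.} The corollary only assumes $p$ is a quadratic residue, so the order $d$ of $p$ might be a proper divisor of $g$, which Theorem \ref{slopes} does not cover. I would reprove the relevant part of Theorem \ref{slopes}, which presumably runs through the Shimura--Taniyama method, in this generality.
\begin{itemize}
\item For the degree-$\ell$ cover, the Jacobian has complex multiplication by $\mathbf{Q}(\zeta_\ell)$. Its slopes are computed orbit by orbit for the action of multiplication by $p$ on $(\mathbf{Z}/\ell\mathbf{Z})^\times$, using the signature $h_a$, i.e. whether the fractional parts of $a\cdot(\text{branch data})/\ell$ sum to $1$ or $2$. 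Now there are $(\ell-1)/d$ orbits of size $d$ instead of two orbits of size $g$, but each orbit still contributes a slope equal to the proportion of its elements with $h_a=1$. This gives some symmetric polygon $\xi$, possibly with more than two slopes.
\item For the degree-$2\ell$ cover from the proof of Theorem \ref{slopes}, the new part of the Jacobian corresponds to $(\mathbf{Z}/2\ell\mathbf{Z})^\times \cong (\mathbf{Z}/\ell\mathbf{Z})^\times$ via the Chinese remainder theorem, since $p \equiv 1 \pmod 2$. Under this bijection the orbits of $p$ and the signatures should match those of the degree-$\ell$ cover, so that the whole Jacobian is isogenous to one with Newton polygon $\xi\oplus\xi$.
\end{itemize}
Verifying this correspondence of signatures, together with the genus computation $2g$, is the step I expect to need the most care. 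If it fails in general, the fallback is to state the corollary only for $p$ of order exactly $g$, where the argument above is immediate.
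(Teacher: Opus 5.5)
For $p$ of order exactly $g$, your argument is the paper's: its whole proof is ``follows immediately from the Theorem,'' i.e.\ Theorem~\ref{slopes} with $n=1$ and $n=2$. The hypotheses $p>2$ and $p\neq\ell$ are exactly what $n=2$ needs, and your multiplicity count showing that the $n=2$ polygon is $\xi\oplus\xi$ is right. You are also right that ``$p$ is a nonzero square mod $\ell$'' is weaker than the theorem's hypothesis; the paper passes over this.

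\textbf{The gap.} Your repair for order $d<g$ fails. The Frobenius orbits do correspond under $(\mathbf{Z}/2\ell\mathbf{Z})^\times\cong(\mathbf{Z}/\ell\mathbf{Z})^\times$, but the signatures do not.
\begin{enumerate}
\item The $n=2$ cover is $y^{2\ell}=x(1-x)^{\ell-4}$, with inertia type $(1,\ell-4,\ell+3)$. For odd $j$, Chevalley--Weil gives $f(j)=\lceil 3s+\tfrac12\rceil-\lfloor 4s+\tfrac12\rfloor$ with $s=j/(2\ell)$. Hence $f(j)=1$ iff $s\in(0,\tfrac18)\cup(\tfrac16,\tfrac38)\cup(\tfrac12,\tfrac58)\cup(\tfrac56,\tfrac78)$.
\item Reduce this set mod $\ell$. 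Compare it with the CM type of $y^\ell=x(1-x)^{\ell-4}$, which is $\Phi_1=\{j: j/\ell\in(0,\tfrac14)\cup(\tfrac13,\tfrac12)\cup(\tfrac23,\tfrac34)\}$. The new set is obtained from $\Phi_1$ by replacing each even $j\in(\ell/3,\ell/2)$ with $-j$.
\item When the orbits are the squares and the non-squares, this change is harmless; that is the content of Theorem~\ref{slopes} for $n=2$. It is also harmless for $d\le 2$. For $2<d<g$ it can change the slopes.
\end{enumerate}

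\textbf{Counterexample.} Take $\ell=13$ and $p=3$, a square of order $3$.
\begin{enumerate}
\item For the degree-$13$ cover, $\Phi_1=\{1,2,3,5,6,9\}=\{1,3,9\}\cup\{2,5,6\}$ is a union of Frobenius orbits, so $\xi$ is ordinary.
\item For $y^{26}=x(1-x)^9$, the odd $j$ with $f(j)=1$ are $\{1,3,5,7,9,15\}$. The orbits $\{1,3,9\}$, $\{5,15,19\}$, $\{7,11,21\}$, $\{17,23,25\}$ give slopes $1,\tfrac23,\tfrac13,0$.
\item So the genus-$12$ curve has slopes $0,\tfrac13,\tfrac23,1$ with multiplicities $9,3,3,9$. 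This is not $\xi\oplus\xi$.
\end{enumerate}

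\textbf{What your argument proves.} Your construction proves the corollary only under the theorem's hypothesis, which is your fallback, not the statement as written. For squares of order $2<d<g$ the construction does not give $\xi\oplus\xi$.

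The literal statement is still true, but only for a trivial reason: take $\xi$ ordinary, since ordinary curves exist in every genus and every characteristic. This shows the corollary has content only when $\xi$ is the polygon from Theorem~\ref{slopes}. It should therefore assume that $p$ has order exactly $g$ modulo $\ell$, and under that hypothesis your proof and the paper's coincide.
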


The next two corollaries show how to use Theorem \ref{slopes} to construct supersingular curves.

\begin{corollary}\label{germain1}
    Let $\ell>3$ be a prime with $\ell \equiv 1$ modulo $4$ and $n \geq 1$ be coprime to $\ell$. Define $g := \frac{\ell-1}{2}$. If $p$ is a prime with order $g$ in $\left (\mathbf{Z}/\ell\mathbf{Z} \right)^\times$ such that $gcd(p,n\ell) = 1$, and the order of $p$ in $(\mathbf{Z}/n\mathbf{Z})^\times$ is coprime to $g$, then there exists a supersingular curve of genus $ng$ defined over $\overline{\mathbf{F}_p}$.
\end{corollary}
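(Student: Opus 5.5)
The plan is to deduce the corollary directly from Theorem \ref{slopes} by showing that the hypothesis $\ell \equiv 1 \pmod 4$ forces $\alpha = g/2$. The theorem then gives only the slope $1/2$, which is exactly supersingularity. The hypotheses on $p$ and $n$ in the corollary are word-for-word those of Theorem \ref{slopes}, so that theorem provides a cyclic cover $\phi: X \to \mathbf{P}^1$ branched at three points over $\overline{\mathbf{F}_p}$, with $X$ of genus $ng$ and Newton polygon slopes only $\alpha/g$ and $(g-\alpha)/g$.

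The key step is a symmetry computation for $\alpha$. Let
\[
S = (0,\ell/4) \cup (\ell/3,\ell/2) \cup (2\ell/3,3\ell/4).
\]
Its reflection under $x \mapsto \ell - x$ is
\[
\ell - S = (3\ell/4,\ell) \cup (\ell/2,2\ell/3) \cup (\ell/4,\ell/3).
\]
The sets $S$ and $\ell - S$ are disjoint, and together they cover $(0,\ell)$ except for the endpoints $\ell/4, \ell/3, \ell/2, 2\ell/3, 3\ell/4$. None of these endpoints is an integer, because $\ell > 3$ is prime. Hence every integer in $\{1,\dots,\ell-1\}$ lies in exactly one of $S$ and $\ell - S$.

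Since $\ell \equiv 1 \pmod 4$, $-1$ is a quadratic residue modulo $\ell$. Therefore $x \mapsto \ell - x$ is a bijection from the quadratic residues in $S$ onto the quadratic residues in $\ell - S$. There are $g = (\ell-1)/2$ nonzero quadratic residues in total, split equally between $S$ and $\ell - S$, so $\alpha = g/2$. This is an integer, since $4 \mid \ell - 1$ makes $g$ even.

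With $\alpha = g/2$, both slopes $\alpha/g$ and $(g-\alpha)/g$ equal $1/2$, so the Newton polygon of $\mathrm{Jac}(X)$ is the pure slope-$1/2$ polygon of height $2ng$. By the Dieudonné–Manin classification, an abelian variety whose Newton polygon has all slopes $1/2$ is isogenous to a product of supersingular elliptic curves. Thus $X$ is a supersingular curve of genus $ng$ over $\overline{\mathbf{F}_p}$.

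The only step with real content is the counting argument that $\alpha = g/2$. Its essential points are the non-integrality of the endpoints and the fact that $-1$ is a square modulo $\ell$, which is exactly where the hypothesis $\ell \equiv 1 \pmod 4$ is used.
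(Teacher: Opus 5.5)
Your proof is correct and follows essentially the paper's route: the paper also applies Theorem \ref{slopes} and uses $\left(\frac{-1}{\ell}\right)=1$ to exploit the reflection $x\mapsto \ell-x$. The paper phrases this via quadratic excess, moving $(2\ell/3,3\ell/4)$ onto $(\ell/4,\ell/3)$ so that the excess of your set $S$ equals $q(0,\ell/2)=0$. You instead reflect all of $S$ onto its complement in $(0,\ell)$, which is slightly more self-contained because it also proves the vanishing of $q(0,\ell/2)$ that the paper uses without comment. One small note: going from ``all slopes $1/2$'' to ``isogenous to a product of supersingular elliptic curves'' needs more than Dieudonn\'e--Manin (it is a theorem of Oort), though in this paper it is simply taken as the working definition of a supersingular curve.
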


\begin{corollary}
\label{germain2}
    Let $\ell > 3$ be a prime and $n \geq 1$ be coprime to $\ell$. Define $g := \frac{\ell-1}{2}$. If $p$ is a prime with order $2g$ in $\left (\mathbf{Z}/\ell\mathbf{Z} \right)^\times$ such that $gcd(p,n\ell) = 1$, and the order of $p$ in $(\mathbf{Z}/n\mathbf{Z})^\times$ is coprime to $g$, then there exists a supersingular curve of genus $ng$ defined over $\overline{\mathbf{F}_p}$.
\end{corollary}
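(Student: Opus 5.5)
The plan is to rerun the Shimura--Taniyama computation behind Theorem \ref{slopes}, now with $p$ a primitive root modulo $\ell$. The aim is to show that every Frobenius orbit of characters contributes slope exactly $1/2$. I expect the $n$-part of the characters to be the main obstacle.

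\textbf{The cover.} Take the same cyclic cover of degree $N = n\ell$ as in Theorem \ref{slopes}. For definiteness, use inertia type $(1, nb', c)$ with $c \equiv -(1+nb') \pmod N$, where $b'$ is chosen so that $\ell \nmid 1+nb'$. Then exactly two of the three inertia generators are units modulo $N$. Riemann--Hurwitz gives
\[
2\,\mathrm{genus}(X) - 2 = N - (1 + n + 1),
\]
so $X$ has genus $ng$.

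\textbf{Eigenspaces and Newton polygon.} Decompose $H^1$ into $\mu_N$-eigenspaces indexed by $j \in \mathbf{Z}/N\mathbf{Z}$. If $\ell \mid j$, the character is ramified at only two points, so the eigenspace is zero. For the remaining $j$, the dimension of $H^0(X,\Omega^1)_j$ is
\[
h_j = \langle j/N\rangle + \langle jnb'/N\rangle + \langle jc/N\rangle - 1 \in \{0,1\},
\]
and $h_j + h_{-j} = 1$. Since Jac$(X)$ has CM, Shimura--Taniyama (as used for Theorem \ref{slopes}) says the following. The Newton polygon is the union over orbits $O$ of multiplication-by-$p$ on the relevant $j$. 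Each orbit contributes slope $\frac{1}{|O|}\sum_{j\in O} h_{-j}$ with multiplicity $|O|$. So it suffices to show that each orbit has exactly half its $h$-values equal to $1$.

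\textbf{The symmetry.} Since $p$ has order $2g = \ell-1$ modulo $\ell$, we have $p^g \equiv -1 \pmod \ell$. If $-1$ lies in the subgroup generated by $p$ in $(\mathbf{Z}/N\mathbf{Z})^\times$, then every orbit is stable under $j \mapsto -j$. The relation $h_j + h_{-j} = 1$ then gives slope $1/2$ on every orbit, so $X$ is supersingular. To produce $-1$, let $r$ be the order of $p$ modulo $n$. If $r$ is odd, then $p^{gr} \equiv -1 \pmod \ell$ and $p^{gr} \equiv 1 \pmod n$. That is, $p^{gr}$ acts as $-1$ on the $\ell$-component and trivially on the $n$-component.

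\textbf{Main obstacle.} This step is where the real work lies, and it has two parts. First, one must check that for this cover the pairing $j \mapsto j'$, with $j' \equiv -j \pmod \ell$ and $j' \equiv j \pmod n$, still satisfies $h_j + h_{j'} = 1$. This need not follow from the formula above. If it fails, I would replace the cover by the pullback of $y^\ell = x^a(1-x)^b$ along $z^n = x$. The pieces of its Jacobian are indexed by pairs $(i \bmod \ell,\ k \bmod n)$, and the $\ell$-part of $h$ can be separated there. Second, the case of $r$ even with $g$ odd must be handled. In that case no power of $p$ is $\equiv -1 \pmod \ell$ and $\equiv 1 \pmod n$, since $\gcd(2g, r) = 2 \nmid g$. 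There I would instead count directly. The orbit projects onto all of $(\mathbf{Z}/\ell\mathbf{Z})^\times$ with equal fibres, because $\gcd(r,g)=1$ makes the orbit a product-like set via CRT. The sum of $h$ over the orbit can then be rewritten as an average over all units modulo $\ell$, and that average equals $1/2$ by the symmetry $u \leftrightarrow -u$ in $(\mathbf{Z}/\ell\mathbf{Z})^\times$.
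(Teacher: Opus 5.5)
Your setup is fine: genus $ng$ by Riemann--Hurwitz, vanishing of the eigenspaces with $\ell\mid j$, $h_j+h_{-j}=1$, and the reduction to orbit counts via Shimura--Taniyama. The gap is the step that carries the proof, slope $1/2$ on every orbit. It is never established, and for your family with $b'$ left free it is actually false.

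The fatal case is $r$ even, which forces $g$ odd. Then $\gcd(r,2g)=2$, so $p$ has order $rg$ (not $2rg$) modulo $n\ell$, and the orbit of $j\leftrightarrow(s,i)$ (with $s=j \bmod n$, $i=j \bmod \ell$) is not product-like. Write $H=\langle p \bmod n\rangle$. The orbit consists of the pairs $(s',i')$ with $s'\in sH^2$ and $i'$ in the square class of $i$, together with those with $s'\in spH^2$ and $i'$ in the other square class. Since $-1$ is a nonresidue mod $\ell$ here, your symmetry $u\mapsto -u$ carries you from one half of these classes mod $n$ to the other, where nothing relates the values of $h$.

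Concretely, take $\ell=7$, $n=5$, $p=3$ (order $6$ mod $7$, order $4$ mod $5$) and $b'=1$, i.e.\ $y^{35}=x(1-x)^5$ with inertia type $(1,5,29)$. The orbit $\langle 3\rangle=\{1,3,4,9,11,12,13,16,17,27,29,33\}\subset\mathbf{Z}/35\mathbf{Z}$ contains exactly four $j$ with $h_j=1$, namely $12,13,27,33$. So slopes $1/3$ and $2/3$ occur, and this member of your family is not supersingular although all hypotheses hold.

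Your other route also fails. For $r$ odd, the pairing $h_j+h_{p^{gr}j}=1$ is false in general. For $\ell=5$, $n=3$, $p\equiv 7 \pmod{15}$ and inertia type $(1,6,8)$, one has $p^{gr}\equiv 4$ and $h_1=h_4=0$. The fallback, the pullback along $z^n=x$, is again a cyclic $n\ell$-cover of type $[n\ell,n\ell,\ell]$, so it lies in the same family.

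What is missing is a specific choice of cover, together with a statement that is uniform in the residue class mod $n$. In every class, the $g$ indices $j$ ($\ell\nmid j$) with signature $1$ must split into the same number $\alpha$ of quadratic residues and $g-\alpha$ nonresidues mod $\ell$. The paper takes exactly this from the proof of Theorem \ref{slopes}, via the quadratic excess computation, for the cover of Construction \ref{construction}. In your notation that cover is $nb'\equiv 3 \pmod{\ell}$, up to relabeling branch points.

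With this uniformity the corollary follows for either parity of $r$. Since $p$ is a nonresidue, each relevant orbit consists of the residues over $m$ classes and the nonresidues over $m$ classes (possibly the same classes). That gives $m\alpha+m(g-\alpha)=mg$ indices with signature $1$ among $2mg$.

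For $r$ odd alone you could bypass this. The orbits are then unions of full classes mod $n$, and a fractional-part summation shows that each class contains exactly $g$ indices with $h_j=1$, for every $b'$. The example above shows this cannot extend to $r$ even.
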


\par In particular, if $g>2$ is a Sophie Germain prime, then $\ell= 2g+1$ is $3$ modulo $4$, so Corollary \ref{germain2} with $n = 1$ gives the same supersingular curves as \cite[Corollary 3.10]{pries_cyclic}. We are also able to use Theorem \ref{slopes} to give explicit counterexamples to Expectation \ref{expectation1}.

\begin{proposition}
\label{denominators}
    If $g > 363$ is a Sophie Germain prime, then for every $n \geq 1$ such that $gcd(n,2g+1)=1$, there exists a curve of genus $ng$ whose Newton polygon is unlikely and has slopes with denominator $g$. 
\end{proposition}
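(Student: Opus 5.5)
The plan is to apply Theorem \ref{slopes} with $\ell := 2g+1$, which is prime because $g$ is a Sophie Germain prime. It then remains to check three things: the arithmetic hypotheses can be met for every $n$ coprime to $\ell$; the slopes $\alpha/g$ and $(g-\alpha)/g$ really have denominator $g$; and the resulting Newton polygon is unlikely in $\mathcal{A}_{ng}$.

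\textbf{Choosing $p$.} Since $g$ is prime, every quadratic residue $r \not\equiv 1$ modulo $\ell$ has order exactly $g$ in $(\mathbf{Z}/\ell\mathbf{Z})^\times$. Because $\gcd(n,\ell)=1$, the Chinese remainder theorem and Dirichlet's theorem give infinitely many primes $p$ with $p \equiv r \pmod{\ell}$ and $p \equiv 1 \pmod{n}$. Such a $p$ satisfies $\gcd(p,n\ell)=1$ as soon as $p \nmid n$, and the order of $p$ modulo $n$ is $1$, which is coprime to $g$. Theorem \ref{slopes} then produces a curve $X$ of genus $ng$ over $\overline{\mathbf{F}_p}$ whose only slopes are $\alpha/g$ and $(g-\alpha)/g$, each occurring with multiplicity $ng$.

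\textbf{Denominators and lower bounds on $\alpha$ and $g-\alpha$.} Because $g$ is prime, it suffices to show $0<\alpha<g$; then both slopes are in lowest terms with denominator $g$. For unlikeliness I will need the stronger bounds $\alpha \geq c$ and $g-\alpha \geq c$ for an explicit small constant $c$ (I expect $c=7$ to be enough). Since $g$ is an odd prime, $\ell \equiv 3 \pmod 4$. Hence $-1$ is a non-residue, and exactly one element of each pair $\{a,\ell-a\}$ is a quadratic residue.
\begin{itemize}
\item The integer squares $k^2<\ell/4$ are distinct quadratic residues lying in $(0,\ell/4)$. There are about $\sqrt{\ell}/2$ of them, which exceeds $13$ once $\ell>727$. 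This bounds $\alpha$ from below.
\item Integer squares lying in $(\ell/4,\ell/3)$, $(\ell/2,2\ell/3)$ and $(3\ell/4,\ell)$ are residues counted outside $\alpha$. Their numbers are roughly $0.077\sqrt{\ell}$, $0.109\sqrt{\ell}$ and $0.134\sqrt{\ell}$, which together give at least $7$ for $\ell>727$. If this is too tight, I will also use squares of the form $k^2-\ell$ and, via the pairing $a \leftrightarrow \ell-a$, non-squares in the complementary intervals. This bounds $g-\alpha$ from below.
\end{itemize}
I expect the threshold $g>363$ to be exactly what makes this explicit count work, and I will check the boundary case directly.

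\textbf{Unlikeliness.} By symmetry I may assume $\lambda := \alpha/g \le 1/2$, after swapping $\alpha$ and $g-\alpha$ if necessary. The polygon $\xi$ then has slope $\lambda$ on $[0,ng]$ and slope $1-\lambda$ on $[ng,2ng]$. I will use Oort's formula for $\dim \mathcal{A}_G[\xi]$ with $G=ng$, which counts lattice points $(x,y)$ with $0\le y<x\le G$ lying on or above $\xi$. The codimension is then the number of lattice points in the triangle strictly below the segment $y=\lambda x$, $0 \le x \le G$. This count is at least about $\lambda G^2/2 - O(G) = \alpha n^2 g/2 - O(ng)$.

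I then need this to exceed $\dim \mathcal{M}_{ng} = 3ng-3$. Since $\min(\alpha, g-\alpha) \geq 7$ and $n \geq 1$, this follows once the lattice-point count is made precise; a crude bound such as $\sum_{x=1}^{G}\lceil \lambda x\rceil \ge \lambda G(G+1)/2$ should suffice. This explicit lattice-point estimate is the step I expect to be the main obstacle, together with pinning down the exact constants in the residue count that force the hypothesis $g>363$.
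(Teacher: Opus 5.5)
Your proposal is correct, but it reaches unlikeliness by a different and more elementary route than the paper.

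\textbf{The paper's route.} It uses Johnson's tables together with the P\'olya--Vinogradov inequality to bound $|a-b|<2\sqrt{2g+1}\log(2g+1)$, so the smaller slope $a/g$ is forced close to $1/2$. It then compares with an auxiliary two-slope polygon of smaller slope $\lfloor gf(g)\rfloor/g$ and computes that polygon's codimension via Pick's theorem. The threshold $g>363$ comes out of this analytic estimate.

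\textbf{Your route.} You only need $\min(\alpha,g-\alpha)$ bounded below by a small constant. You get this by exhibiting perfect squares:
\begin{itemize}
\item in $(0,\ell/4)$, giving $\alpha\ge 13$;
\item in $(\ell/4,\ell/3)\cup(\ell/2,2\ell/3)\cup(3\ell/4,\ell)$, which contains exactly $g-\alpha$ residues, giving at least $2+2+3=7$ once $\ell\ge 727$.
\end{itemize}

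The step you flag as the main obstacle is in fact immediate. The codimension in $\mathcal{A}_{ng}$ is $\sum_{x=1}^{ng}\lceil ax/g\rceil\ge \frac{a}{g}\cdot\frac{ng(ng+1)}{2}=\frac{an(ng+1)}{2}$. With $a\ge 7$ this already exceeds $3ng-3$ for every $n\ge 1$. Likewise, you should not expect $g>363$ to be ``exactly'' what your square count needs. That threshold is an artifact of the paper's estimate, and your bounds hold comfortably above it.

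\textbf{What each approach buys.} Your route is self-contained: it needs no P\'olya--Vinogradov constants and no Johnson tables. It also treats general $n$ explicitly, working directly in $\mathcal{A}_{ng}$ against $3ng-3$. The paper's computation is done in height $2g$ against $3g-3$ and leaves general $n$ implicit. The paper's route gives the stronger quantitative fact that both slopes tend to $1/2$ as $g\to\infty$, which the proposition does not need.
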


\par We note that \cite[Corollary 3.10]{pries_cyclic} gives the same Newton polygons as Proposition \ref{denominators} for $n=1$, but Proposition \ref{denominators} shows we have infinitely many unlikely Newton polygons with large denominators even though we don't know if there are infinitely many Sophie Germain primes.

\par Finally, we are able to use Theorem \ref{slopes} to prove the following results about the natural density $\delta_{ss}(g)$:
\begin{proposition} \label{limsup}
    We have that $\limsup_{g \to \infty} \delta_{ss}(g) > 0.9999$.
\end{proposition}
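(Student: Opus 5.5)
The plan is to feed Corollary \ref{germain2} a fixed finite collection of safe primes, all attached to one common genus. For a genus $G$, every prime $\ell=2q+1$ with $q$ prime and $q\mid G$ gives, through Corollary \ref{germain2} with $n=G/q$, a supersingular curve of genus $G$ for every prime $p$ that is a primitive root modulo $\ell$, subject to some side conditions. Residues of $p$ modulo distinct primes are independent by Dirichlet's theorem. So if $G$ is divisible by many such $q$, the density of $p$ for which no $\ell$ works should be about $\prod_i \bigl(1-\tfrac{q_i-1}{2q_i}\bigr)$, and we make this smaller than $10^{-4}$. Letting $G$ run through an infinite sequence then gives the $\limsup$.

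\textbf{Choice of the safe primes.} Fix a finite list of odd primes $q_1<\dots<q_k$ such that each $\ell_i:=2q_i+1$ is prime and:
\begin{enumerate}[(a)]
\item no $q_i$ divides $q_j-1$ for $j\neq i$;
\item no $q_j$ equals some $\ell_i$;
\item $\prod_{i=1}^k \frac{q_i+1}{2q_i}<10^{-4}$.
\end{enumerate}
Condition (c) needs roughly $k\approx 15$ to $20$ safe primes, and a finite list of them exists by explicit search, since we only need finitely many. I expect choosing the list to satisfy (a) and (b) simultaneously to be the main bookkeeping obstacle. Condition (a) is the delicate one. I would first try to take $q_j\equiv 2 \pmod{q_i}$ for $i<j$, and otherwise just verify a concrete list by computer; the paper's SageMath setup is well suited to this. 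Put $M=\prod_i q_i$.

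\textbf{The genera.} Let $r$ run over primes with $r\equiv 2 \pmod{q_i}$ for all $i$ and $r\notin\{\ell_1,\dots,\ell_k\}$. There are infinitely many such $r$ by Dirichlet. Set $G_r=Mr$, so $G_r\to\infty$. For each $i$ take $n_i=G_r/q_i=r\prod_{j\ne i}q_j$.

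Check the hypotheses of Corollary \ref{germain2} for $(\ell_i,n_i)$:
\begin{itemize}
\item $\gcd(n_i,\ell_i)=1$ by (b) and $r\neq \ell_i$.
\item The order of $p$ modulo $n_i$ divides $\operatorname{lcm}\bigl(r-1,\,q_j-1 : j\ne i\bigr)$. This is prime to $q_i$ by (a) and $r\equiv 2\pmod{q_i}$, hence coprime to $g=q_i$.
\item $\gcd(p,n_i\ell_i)=1$ excludes only finitely many $p$, which does not affect density.
\end{itemize}
So for every prime $p$, apart from finitely many, that is a primitive root modulo some $\ell_i$, there is a supersingular curve of genus $G_r$.

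\textbf{Density estimate.} By the Chinese remainder theorem and Dirichlet's theorem on primes in progressions modulo $\prod_i\ell_i$, the density of $p$ that is a primitive root modulo none of the $\ell_i$ is
\[
\prod_{i=1}^k\Bigl(1-\frac{\varphi(2q_i)}{2q_i}\Bigr)=\prod_{i=1}^k\frac{q_i+1}{2q_i}<10^{-4}.
\]
Hence $\delta_{ss}(G_r)>0.9999$ for all the infinitely many $r$, which gives the $\limsup$ bound. Optionally, Corollary \ref{germain1} for $\ell_i\equiv 1\pmod 4$ could add more primes, but safe primes with $q_i$ odd are $3 \bmod 4$, so Corollary \ref{germain2} alone is what is used.
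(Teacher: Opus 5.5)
Your proposal is correct and is essentially the paper's argument: Corollary~\ref{germain2} applied to pairwise-compatible Sophie Germain primes sharing one genus, with CRT/inclusion--exclusion giving density $1-\prod_i \frac{q_i+1}{2q_i}$. The paper varies the genus as $2^a\prod g_i$ rather than $r\prod q_i$, which avoids Dirichlet and makes the order condition automatic, since $\varphi(2^a)$ is a power of $2$ and the $q_i$ are odd. You still need to exhibit the list, and the paper's printed set $S$ does not itself satisfy the pairwise conditions: it contains $41$ and $83=2\cdot 41+1$, and $5\mid 41-1$. However, its subset $\{3,5,29,53,113,173,239,293,443,509,593,653,659,809,953\}$ satisfies your condition (a) and gives $\prod_i\frac{q_i+1}{2q_i}\approx 5.3\times 10^{-5}<10^{-4}$.
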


Furthermore, if we assume Dickson's Conjecture --- stated in Conjecture \ref{conjecture:dickson} --- then we can compute the value of $\limsup_{g \to \infty} \delta_{ss}(g)$. 

\begin{theorem}
\label{dickson}
    If Dickson's Conjecture is true, then $\limsup_{g \to \infty} \delta_{ss}(g) = 1$.
\end{theorem}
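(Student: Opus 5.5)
We need to prove: assuming Dickson's Conjecture, $\limsup_{g\to\infty}\delta_{ss}(g)=1$. Since densities are at most $1$, it suffices to show that for every $\varepsilon>0$ there are arbitrarily large $g$ with $\delta_{ss}(g)>1-\varepsilon$.

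\textbf{Producing supersingular curves.} The source of supersingular curves will be Corollaries \ref{germain1} and \ref{germain2}, which rest on Theorem \ref{slopes}. Fix a genus of the form $G=ng$ with $g=(\ell-1)/2$. A prime $p$ then yields a supersingular curve of genus $G$ in two cases:
\begin{itemize}
\item the order of $p$ modulo $\ell$ is $g$ and $\ell\equiv 1 \pmod 4$ (Corollary \ref{germain1});
\item the order of $p$ modulo $\ell$ is $2g$ (Corollary \ref{germain2}).
\end{itemize}
In both cases we also need the order of $p$ modulo $n$ to be coprime to $g$.

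To make the orders of $p$ modulo $\ell$ plentiful, take $g=q$ prime, so that $\ell=2q+1$ is a safe prime. Then $(\mathbf{Z}/\ell\mathbf{Z})^\times$ has order $2q$, and every $p\not\equiv \pm1 \pmod \ell$ has order $q$ or $2q$. Since $\ell=2q+1$ with $q$ odd gives $\ell\equiv 3 \pmod 4$, only Corollary \ref{germain2} applies, so only order $2q$ is usable. These are the non-residues, about half the primes, which is not enough.

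\textbf{Combining several $\ell$ in one genus.} The natural fix is to use several primes $\ell_1,\dots,\ell_k$ simultaneously. We want a single genus $G$ with $G=n_i g_i$ for each $i$, where $g_i=(\ell_i-1)/2$ is prime. Then $p$ works as soon as $p$ is a non-residue modulo some $\ell_i$ and the coprimality condition modulo $n_i$ holds.

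Dickson's Conjecture is what lets us find such $\ell_i$. Given distinct primes $q_1,\dots,q_k$ and the linear forms attached to them, it guarantees infinitely many integers at which all the forms are simultaneously prime, subject to the admissibility condition. We would use it to produce tuples $(q_i,\,2q_i+1)$ that are all prime, with the $q_i$ arranged so that the product $G=q_1\cdots q_k$ (or a suitable multiple) is the genus. We then take $n_i=G/q_i$ and need $\gcd(n_i,\ell_i)=1$.

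\textbf{Density estimate.} The non-residue events modulo distinct $\ell_i$ are independent by Chebotarev/CRT, so the density of primes that are residues modulo every $\ell_i$ is $2^{-k}$.

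The main obstacle is the condition that the order of $p$ modulo $n_i$ be coprime to $q_i$. For $n_i=\prod_{j\ne i}q_j$, the order of $p$ modulo $q_j$ divides $q_j-1$. So it suffices to require $q_i\nmid q_j-1$ for all $i\neq j$, for instance by taking all $q_j$ in a range where no $q_i$ divides another's $q_j-1$. The cleanest version takes the $q_j$ of comparable size in $(X,2X)$: then $q_i\mid q_j-1$ would force $q_j-1=q_i$ or $2q_i$, both impossible for odd primes in that range except trivially. Dickson's Conjecture then supplies $k$ Sophie Germain primes in such a window, which may require applying it with forms $a_jx+b_j$ chosen to be admissible.

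Granting these, we get $\delta_{ss}(G)\ge 1-2^{-k}$ (excluding finitely many primes, which does not affect density) for infinitely many $G$. Letting $k\to\infty$ gives $\limsup\delta_{ss}=1$.
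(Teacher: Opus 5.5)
Your proposal is correct in outline and reaches the theorem by a somewhat different route from the paper. The core mechanism is the same. Choose odd Sophie Germain primes $q_1,\dots,q_k$ with $q_i\nmid q_j-1$ and $2q_i+1\neq q_j$, set $n_i=G/q_i$, and apply Corollary \ref{germain2} for each $i$. Then $p$ yields a supersingular curve of genus $G$ as soon as it has order $2q_i$ modulo $2q_i+1$ for some $i$.

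The difference lies in how Dickson's Conjecture produces the $q_i$. The paper takes the alternate terms $\ell_1,\ell_3,\dots,\ell_{2m-1}$ of a Cunningham chain of the first kind of length $2m$. That is, it applies the conjecture to the fixed family $2^{i-1}(x+1)-1$, whose admissibility is automatic: modulo an odd prime $p$ it excludes at most as many residues as the order of $2$ modulo $p$, hence at most $p-1$. The paper then needs the extra condition $\ell_1>2^{2m-2}-1$ to rule out $g_i\mid g_j-1$, because $g_j-1\equiv 2^{d}-2 \pmod{g_i}$ with $d$ the gap in chain indices. It also multiplies by $2^a$ to get infinitely many genera.

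You instead cluster the $q_i$ in a window $(X,2X)$. This makes $q_i\nmid q_j-1$ and $\gcd(n_i,2q_i+1)=1$ immediate from size alone; for the latter, note that $2q_i+1>2X$, and you should say this explicitly. The price is a bespoke admissible $2k$-tuple of forms, which you leave unspecified. That step is routine. With $P=\prod_{p\le 2k}p$, the forms $x+jP$ and $2x+2jP+1$ for $1\le j\le k$ are admissible:
\begin{itemize}
\item for an odd prime $p\le 2k$ they exclude only the classes $x\equiv 0$ and $2x\equiv -1$;
\item for $p=2$ they exclude only $x\equiv 0$;
\item for $p>2k$ they exclude at most $2k<p$ classes.
\end{itemize}
Any $x>kP$ at which all of them are prime then gives $q_j=x+jP\in(x,2x)$.

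One correction to your density estimate. Corollary \ref{germain2} needs order exactly $2q_i$, which rules out $p\equiv -1 \pmod{2q_i+1}$ as well as the residues, so only $q_i-1$ of the $2q_i$ unit classes are usable. This class has positive density, so it cannot be absorbed into ``finitely many primes''. The bound your construction actually gives is $\delta_{ss}(G)\ge 1-\prod_{i}\frac{q_i+1}{2q_i}$, which is the paper's expression (\ref{ie}), not $1-2^{-k}$. Since every $q_i>X$ and $X\to\infty$, this still tends to $1-2^{-k}$, so the conclusion $\limsup_{g\to\infty}\delta_{ss}(g)=1$ is unaffected.
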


\par Section 2 of this paper gives a background on abelian covers of $\mathbf{P}^1$ branched at three points and explains how to utilize the fact that the Jacobian has complex multiplication to compute Newton polygons and Ekedahl-Oort types. Section 3 gives computational results on the natural density of primes that give supersingular and superspecial curves for each genus, and also provides data on how often unlikely Newton polygons and Ekedahl-Oort types occur. Section 4 gives the proof of Theorem \ref{slopes} and its corollaries by utilizing the Shimura-Taniyama formula to analyze the orbits of a specific cover, and Section 5 utilizes Theorem \ref{slopes} to prove results about natural densities and Newton polygons which has slopes with large denominators.
\section{Abelian Covers}

\par \hspace{\parindent} Let $X$ be a smooth projective connected curve over $k$, where char$(k) = p$. Let $\phi: X \to \mathbf{P}^1$ be a Galois cover branched at exactly $B := \{0,1,\infty\}$. Let $g$ be the genus of $X$. Assume that $G$ is abelian and that $p$ does not divide the order of $G$.

\subsection{Ramification and Inertia Types}
\par \hspace{\parindent} If $b \in B$, let $I_b$ be the inertia group above $b$ and define $c_b = |I_b|$. As $p$ does not divide the order of $G$, $\phi$ is tamely ramified; therefore, $I_b$ is cyclic. We assume without loss of generality that $c_0 \geq c_1 \geq c_\infty$.
\begin{definition}
    The \textit{ramification type} of the cover $\phi$ is the tuple $z = [c_0,c_1,c_\infty, s]$ where $s$ is the index of $I_0$ in $G$.
\end{definition}
\par As $p$ does not divide the order of $G$, $\phi$ is tamely ramified. Therefore, the ramification type determines the genus of $X$ by the Riemann-Hurwitz formula as in \cite[Lemma 2.3]{booher_supersingular_2025}.
\begin{definition}
    An \textit{inertia type} associated to a ramification type $z = [c_0,c_1,c_\infty, s]$ is the tuple $a = (a_0, a_1, a_\infty)$ of elements of $G$ such that 
    \begin{enumerate}[(1)]
        \item $|a_b| = c_b$ for $b \in B$
        \item $a_0+a_1+a_\infty = 0$ in $G$
        \item $a_0$, $a_1$, and $a_\infty$ generate $G$
    \end{enumerate}
\end{definition}
\par For a given ramification type $z$, we can relabel the branch points if their inertia groups have the same order. Furthermore, we can modify the inertia type by applying automorphisms of $G$. If two inertia types are in the same orbit under these two actions, we say they are equivalent.

\subsection{Cyclic Covers}
\par \hspace{\parindent} Assume that $\phi$ is a cyclic cover of degree $m$. Then, $G \simeq \mathbf{Z}/m\mathbf{Z}$. The abelian variety Jac$(X)$ has complex multiplication by $\prod_d  \mathbf{Q}(\zeta_d)$ where the product is taken over all integers $d$ such that $1 < d  \ | \ m$ and $d \nmid a_b$.

\par We also know from \cite{pries_newtonpolygons} that if $1 \leq j < m$ then the dimension of the $\zeta_m^j$-eigenspace of $H^0(X,\Omega_X^1)$, the space of regular differentials on $X$, is either $0$ or $1$.

\begin{definition}
    Let $f: \{1, 2, \ldots, m-1 \} \to \{0,1\}$ be the function that sends $j$ to the dimension of the $\zeta_m^j$-eigenspace of $H^0(X,\Omega_X^1)$. We call $f$ the \textit{signature type} of $\phi$.
\end{definition}

\par If $q$ is a rational number, let $\langle q \rangle$ denote the fractional part of $q$. If we have a cyclic cover with ramification type $[c_0,c_1,c_\infty]$ and inertia type $(a_0,a_1,a_\infty)$, then the Chevalley-Weil formula \cite[Theorem 2.10]{frediani} gives us that \[f(j) = -1 + \sum_{b \in B}\left \langle \frac{-ja_b}{m}\right \rangle.\]

\subsection{Newton Polygons and Ekedahl-Oort Types}
\begin{definition}[\protect{\cite[8.4.1]{oort_denominators}}]
    Let $g$ be a non-negative integer. A \textbf{symmetric Newton polygon of height $\mathbf{2g}$} is a polygon in $\mathbf{Q} \times \mathbf{Q}$ such that
    \begin{itemize}
        \item The polygon starts at $(0,0)$ and ends at $(2g,g)$,
        \item The polygon is lower convex,
        \item The breakpoints are in $\mathbf{Z} \times \mathbf{Z}$,
        \item The slopes are of the form $\lambda \in \mathbf{Q}$ with $0 \leq \lambda \leq 1$,
        \item If a slope $\lambda$ appears, then the slope $1-\lambda$ appears with the same multiplicity.
    \end{itemize}
\end{definition}

\par We can view the Newton polygon as being a multiset of rational numbers representing the slopes with their multiplicities. A curve is ordinary if the only slopes are $0$ and $1$, and a curve is supersingular if the only slopes are $1/2$.

\begin{definition}
    The \textbf{Ekedahl-Oort type} of $X$ is the isomorphism class of the group scheme Jac$(X)[p]$.
\end{definition}

\par By \cite{oda}, the Dieudonn\'e module of Jac$(X)[p]$ is isomorphic to the first de Rham cohomology of Jac$(X)$, so the isomorphism class of Jac$(X)[p]$ is uniquely determined $H^1_{dR}(X)$. Let $F$ and $V$ be the Frobenius operator and Cartier operator on $H^1_{dR}(X)$, respectively. As an example, if $X$ is a supersingular elliptic curve over $k$, then $H^1_{dR}(X)$ is a 2 dimensional vector space over $k$ with basis $\{e_1,e_2\}$ such that $F(e_1) = V(e_1) = e_2$ and $F(e_2) = V(e_2) = 0$.

\subsection{Non-cyclic Covers}
\par \hspace{\parindent} Assume that $\phi$ is a degree $m$ cover that is non-cyclic, and let $a$ be the inertia type of $\phi$. We know that $a_0$, $a_1$, and $a_\infty$ generate $G$ and that $a_0 + a_1 + a_\infty = 0$ in $G$. Therefore, $G$ is generated by $a_0$ and $a_1$. Since $G$ is not cyclic, it cannot be generated by just one element, so we must have that $G$ is of the form $\mathbf{Z}/c\mathbf{Z} \times \mathbf{Z}/d \mathbf{Z}$ such that $d | c$ and $cd = m$.

\par Let $p$ be a prime that does not divide $m$, the order of $G$. Let $\mathcal{T}_G := \textrm{Hom}(G,\mathbf{C}^\times)$ be the Pontryagin dual of $G$. There is a non-canonical isomorphism between $\mathcal{T}_G$ and $G$, so we can identify the elements of the Pontryagin dual with the elements of $G$. Also, if we quotient $G$ by the kernel of an element of $\mathcal{T}_G$, then the result is a finite subgroup of $\mathbf{C}^\times$, and is therefore a cyclic group. 

\par Denote $\mathcal{O}_G$ as the set of orbits of the Frobenius action on $\mathcal{T}_G$.

\begin{proposition}
[\protect{\cite[Section 2.2.2]{lin_abelian_2024}}]
    If $M = \mathbf{D}(\textrm{Jac}(X))$ is the Dieudonn\'{e} module of Jac$(X)$, then we have a decomposition up to isogeny given by $M \simeq \bigoplus_{\mathcal{O} \in \mathcal{O}_G} M_{\mathcal{O}}$, where $M_{\mathcal{O}}$ is the sum of the $\tau$-isotypic components of $M$ for $\tau \in \mathcal{O}$. Let $NP(M)$ be the Newton polygon of $M$. Since the Newton polygon of two Dieudonn\'{e} modules are equal if and only if the modules are isogenous, we have that $NP(M) = \bigoplus_{\mathcal{O} \in \mathcal{O}_G} NP(M_\mathcal{O})$, where the direct sum of Newton polygons is constructed by combining the slopes of each summand.
\end{proposition}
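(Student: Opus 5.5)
The plan is to decompose $M$ according to the characters of $G$ using the group algebra, and then check that Frobenius permutes the isotypic pieces along the orbits in $\mathcal{O}_G$.

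\textbf{Setting up the action and idempotents.} The group $G$ acts on $X$, hence on $\textrm{Jac}(X)$, hence by functoriality on the (contravariant) Dieudonn\'e module $M = \mathbf{D}(\textrm{Jac}(X))$. This module is free of rank $2g$ over $W(k)$, and the action of $G$ commutes with $F$ and $V$ because these are functorial. Since $p \nmid |G|$, every character $\tau \in \mathcal{T}_G$ takes values in the prime-to-$p$ roots of unity. These lift uniquely (Teichm\"uller) to $W(k)^\times$, so we may regard $\tau$ as $W(k)$-valued. Because $|G|$ is a unit in $W(k)$, we can form the orthogonal idempotents
\[
e_\tau = \frac{1}{|G|}\sum_{g\in G}\tau(g)^{-1} g \in W(k)[G], \qquad \sum_{\tau} e_\tau = 1.
\]
These give a decomposition $M = \bigoplus_\tau e_\tau M$ into $\tau$-isotypic components. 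This decomposition is already exact, so a fortiori it holds up to isogeny.

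\textbf{Frobenius permutes the components.} This is the key computation. The operator $F$ is $\sigma$-semilinear, where $\sigma$ is the Witt vector Frobenius, and it commutes with $G$. For the Teichm\"uller lift we have $\sigma(\tau(g)) = \tau(g)^p$. Therefore
\[
F(e_\tau m) = \frac{1}{|G|}\sum_g \tau(g)^{-p}\, g\, F(m) = e_{\tau^p} F(m),
\]
so $F$ maps $e_\tau M$ into $e_{\tau^p}M$. Similarly, $V$ is $\sigma^{-1}$-semilinear and maps $e_\tau M$ into $e_{\tau^{1/p}}M$. The Frobenius action on $\mathcal{T}_G$ is exactly $\tau \mapsto \tau^p$. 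Consequently, for each orbit $\mathcal{O}\in\mathcal{O}_G$ the sum $M_{\mathcal{O}} = \bigoplus_{\tau\in\mathcal{O}} e_\tau M$ is stable under $F$ and $V$, so it is a sub-Dieudonn\'e module. This gives $M = \bigoplus_{\mathcal{O}} M_{\mathcal{O}}$ as Dieudonn\'e modules.

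\textbf{Newton polygons.} Tensor with $\mathbf{Q}$ to obtain a direct sum of $F$-isocrystals. By the Dieudonn\'e--Manin classification, each isocrystal decomposes uniquely up to isomorphism into isoclinic simple pieces. Its Newton polygon is the multiset of slopes of these pieces, and two isocrystals are isomorphic exactly when their modules are isogenous. The simple decomposition of $\bigoplus_{\mathcal{O}} M_{\mathcal{O}}\otimes\mathbf{Q}$ is the union of the simple decompositions of the summands. Hence $NP(M) = \bigoplus_{\mathcal{O}} NP(M_{\mathcal{O}})$, where slopes are concatenated with multiplicity.

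I expect the main point needing care to be the semilinearity computation: one must correctly identify the twist of a character by $\sigma$ with $\tau\mapsto\tau^p$, using the Teichm\"uller lift, so that the orbits of $F$ on isotypic components are precisely the Frobenius orbits $\mathcal{O}_G$.
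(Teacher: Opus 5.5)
Your proof is correct. It is the standard argument behind the result, which the paper does not prove but cites from Lin et al.: idempotents split $M$ into $\tau$-isotypic pieces, and $\sigma$-semilinearity of $F$ and $V$ sends each piece to the one for $\tau^{p}$ (respectively $\tau^{1/p}$); your Newton polygon step is exactly the paper's one-line appeal to isogeny invariance via Dieudonn\'e--Manin. You in fact obtain an exact decomposition of Dieudonn\'e modules over $W(k)$, which is stronger than the ``up to isogeny'' in the statement, and the only care needed is to fix an identification of $\mu_{|G|}(\mathbf{C})$ with the Teichm\"uller roots of unity in $W(k)$ so that $\sigma(\tau(g))=\tau(g)^p$.
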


\par Therefore, we can reduce to the cyclic case. For the Ekedahl-Oort type we have a similar proposition:
\begin{proposition}
    [\protect{\cite[Section 2.3.1]{lin_abelian_2024}}] As mod-$p$ Dieudonn\'{e} modules, $H^1_{dR}(X) \simeq \bigoplus_{\mathcal{O} \in \mathcal{O}_G} H^1_{dR}(X)_{\mathcal{O}}$, where $H^1_{dR}(X)_{\mathcal{O}}$ is the sum of the $\tau$-isotypic components of $H^1_{dR}(X)$ for $\tau \in \mathcal{O}$.
\end{proposition}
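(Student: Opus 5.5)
The idea is to decompose $H^1_{dR}(X)$ into isotypic components for the $G$-action and then follow how $F$ and $V$ move these components. We identify $\mathcal{T}_G$ with $\textrm{Hom}(G,k^\times)$. This is possible because $p \nmid m$, so $\mu_m(k)\simeq \mu_m(\mathbf{C})$, and fixing such an isomorphism identifies the two character groups. The Frobenius action on $\mathcal{T}_G$ is $\tau\mapsto \tau^p$.

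\emph{Step 1: isotypic splitting.} Since $G$ acts on $X$ by automorphisms over $\mathbf{P}^1$, functoriality of de Rham cohomology gives a $k$-linear action of $G$ on $H^1_{dR}(X)$. Because $|G|$ is invertible in $k$ and $G$ is abelian with all characters $k^\times$-valued, $k[G]$ is split semisimple. For each $\tau \in \mathcal{T}_G$ define
\[
e_\tau = \frac{1}{|G|}\sum_{g\in G}\tau(g)^{-1} g .
\]
These are orthogonal idempotents summing to $1$. Setting $H_\tau := e_\tau H^1_{dR}(X)$, the space of $x$ with $g x=\tau(g)x$ for all $g$, we get a $k$-linear decomposition $H^1_{dR}(X)=\bigoplus_\tau H_\tau$.

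\emph{Step 2: $F$ and $V$ permute components.} Both $F$ and $V$ are induced by natural maps: the absolute Frobenius, and the Cartier operator. These commute with pullback by automorphisms of $X$, so $F\circ g = g\circ F$ and $V\circ g= g\circ V$ for all $g\in G$. Now let $x\in H_\tau$. Since $F$ is $\sigma$-semilinear,
\[
g F(x) = F(gx) = F(\tau(g)x) = \tau(g)^p F(x),
\]
so $F(H_\tau)\subseteq H_{\tau^p}$. Similarly, $V$ is $\sigma^{-1}$-semilinear, so $V(H_\tau)\subseteq H_{\tau^{1/p}}$. Note that $\tau^{1/p}$ makes sense because $p$ is invertible modulo the exponent of $G$.

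\emph{Step 3: conclusion.} For a Frobenius orbit $\mathcal{O}\in\mathcal{O}_G$, the subspace $H^1_{dR}(X)_{\mathcal{O}}=\bigoplus_{\tau\in\mathcal{O}}H_\tau$ is stable under both $F$ and $V$, because the orbit is closed under $\tau\mapsto\tau^{p}$ and $\tau\mapsto \tau^{1/p}$. So it is a sub-Dieudonn\'e module. The orbit pieces together give all of $H^1_{dR}(X)$ by Step 1, which yields the asserted direct sum decomposition as mod-$p$ Dieudonn\'e modules.

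The main obstacle is Step 2: justifying that $F$ and $V$ on $H^1_{dR}(X)$, defined via the hypercohomology of the de Rham complex and the Oda identification, really commute with the $G$-action. I would handle this by naturality of the relative Frobenius and the Cartier operator with respect to automorphisms of $X$. Once that is in place, the semilinearity computation, with the exponent $p$ on scalars, is routine.
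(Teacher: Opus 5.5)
Your argument is correct. The paper gives no proof of its own and only cites \cite[Section 2.3.1]{lin_abelian_2024}; your route is the standard one behind that reference: $F$ and $V$ are $G$-equivariant and $\sigma^{\pm1}$-semilinear, so $F(H_\tau)\subseteq H_{\tau^p}$ and $V(H_\tau)\subseteq H_{\tau^{1/p}}$, and hence each orbit sum is a sub-Dieudonn\'e module. You also make explicit the identification of $\mathrm{Hom}(G,\mathbf{C}^\times)$ with $\mathrm{Hom}(G,k^\times)$ via $\mu_m(\mathbf{C})\simeq\mu_m(k)$, which the paper leaves implicit.
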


\subsection{Computing Newton Polygons and Ekedahl-Oort Types}
\par If $G \simeq \mathbf{Z}/m\mathbf{Z}$ and $p$ is a prime not dividing $m$, let $\mathcal{O}_G$ be the set of orbits of the action of multiplication by $p$ on the set $\{j \in G \mid j \neq 0\}$. If $d_j$ is the order of $j$ in $G$, define \[S_1 := \{1 \leq j < m \mid d_j \textrm{ does not divide } a_b \textrm{ for any } b \in B \textrm{ and } f(j)=1\}\]
\par We note that if $\mathcal{O} \in \mathcal{O}_G$, then the order of any element of $\mathcal{O}$ is the same. Let $d_\mathcal{O}$ be the order of any element of $\mathcal{O}$ in $G$.
\par Let $\mathcal{O} \in \mathcal{O}_G$, the Shimura-Taniyama formula  states that 

\begin{theorem}[Shimura-Taniyama Formula]
    \cite[Section 5]{SB_1968-1969__11__95_0} The only slope of the Newton polygon of Jac$(X)[p^\infty]_\mathcal{O}$ is $\lambda = \#(\mathcal{O} \cap S_1)/\#\mathcal{O}$.
\end{theorem}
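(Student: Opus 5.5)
The plan is to compute Frobenius directly on the isotypic pieces of the crystalline (Dieudonn\'e) module of $\textrm{Jac}(X)$ and read off the slope from valuations. Let $W = W(k)$ and $M = H^1_{\mathrm{crys}}(X/W)$, which is the contravariant Dieudonn\'e module of $\textrm{Jac}(X)[p^\infty]$. It carries a $\sigma$-semilinear Frobenius $F$ and a $\sigma^{-1}$-semilinear Verschiebung $V$ with $FV = VF = p$. We have $M/pM \simeq H^1_{dR}(X)$, and under this identification the Hodge filtration is $VM/pM \simeq H^0(X,\Omega^1_X)$.

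The cyclic group $G = \mathbf{Z}/m\mathbf{Z}$ acts on $M$ through $W$-linear automorphisms commuting with $F$ and $V$. Since $p \nmid m$, $W$ contains the $m$-th roots of unity, and $M = \bigoplus_{j} M_j$ splits into eigenspaces for the characters $\zeta_m^j$.

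\textbf{Step 1: rank of the eigenspaces.} Fix an orbit $\mathcal{O}$ and $j \in \mathcal{O}$ with $d_j \nmid a_b$ for every $b$; these are the new-part characters occurring in $\textrm{Jac}(X)[p^\infty]_\mathcal{O}$. I would show that each $M_j$ is free of rank one over $W$. It suffices to check this mod $p$, i.e.\ that $\dim H^1_{dR}(X)_j = 1$. This follows from the Hodge decomposition $H^1_{dR} \simeq H^0(\Omega^1)\oplus H^1(\mathcal{O}_X)$ equivariantly, together with Serre duality, which pairs the $j$- and $(-j)$-eigenspaces, so $\dim H^1_{dR}(X)_j = f(j) + f(m-j)$. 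The Chevalley--Weil formula gives $f(j) + f(m-j) = 1$ exactly when no $\langle -ja_b/m\rangle$ vanishes, that is, when $d_j \nmid a_b$ for all $b$.

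\textbf{Step 2: Frobenius moves along the orbit.} Because $F$ is $\sigma$-semilinear and $\sigma(\zeta) = \zeta^p$, the relation $F(g\cdot x) = g\cdot F(x)$ forces $F(M_j) \subseteq M_{pj}$. With the opposite eigenspace convention this would instead be $M_{p^{-1}j}$, which only reverses the traversal of $\mathcal{O}$ and does not affect the count. Next compare $F(M_j)$ with $M_{pj}$ using $VM \supseteq pM$ and $VM/pM = H^0(\Omega^1)$. Since each rank-one piece has $FM_j \cdot VM_{pj} = pM_{pj}$-type duality, we get:
\[
F(M_j) = pM_{pj} \text{ if } H^0(\Omega^1)_{pj} = 0, \qquad F(M_j) = M_{pj} \text{ if } H^0(\Omega^1)_{pj}\neq 0.
\]
Concretely, $V$ is an isomorphism $M_{pj} \to M_j$ or has image $pM_j$ according to whether the Hodge piece is $H^1(\mathcal{O})$ or $H^0(\Omega^1)$, and $F = pV^{-1}$. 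Up to the convention fix, $F$ is therefore divisible by exactly $p^{f(\cdot)}$ at each step, with the $f$-value read at the appropriate index.

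\textbf{Step 3: compose around the orbit.} Let $r = \#\mathcal{O}$. Then $F^r$ is a $\sigma^r$-semilinear endomorphism of the rank-one module $M_j$ equal to $p^{s}$ times a unit, where $s = \#(\mathcal{O}\cap S_1)$ counts the steps at which $f = 1$. Hence $M_\mathcal{O} = \bigoplus_{i\in\mathcal{O}} M_i$ is an $F$-crystal of rank $r$ with $F^r = p^s u$ on each line. By the Dieudonn\'e--Manin classification, it is isoclinic of slope $s/r$, which is the formula $\lambda = \#(\mathcal{O}\cap S_1)/\#\mathcal{O}$.

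The main obstacle is Step 2. One must get the conventions exactly right: contravariant versus covariant Dieudonn\'e module, whether $F$ sends $M_j$ to $M_{pj}$ or $M_{p^{-1}j}$, and whether the relevant Hodge index is $j$ or $-j$. These choices together determine whether the count uses $S_1$ or its complement, i.e.\ whether one gets $\lambda$ or $1-\lambda$. I would first fix the conventions on a supersingular elliptic-curve test case (for example $y^2 = x^3-1$ with $m = 3$, $p \equiv 2 \bmod 3$, where the orbit $\{1,2\}$ must give slope $1/2$). Then I would check against an ordinary case ($p\equiv 1$ modulo $3$, singleton orbits, slope $f(j)\in\{0,1\}$) to confirm that the slope is $\#(\mathcal{O}\cap S_1)/\#\mathcal{O}$ rather than its complement.
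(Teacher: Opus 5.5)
\paragraph{Comparison with the paper.} The paper does not prove this statement. It quotes the CM form of the Shimura--Taniyama formula from the cited Bourbaki expos\'e. In that form one descends $X$ to a finite field, identifies the CM type of $\textrm{Jac}(X)$ (CM by $\prod_d \mathbf{Q}(\zeta_d)$) with the signature $f$, identifies the primes above $p$ with Frobenius orbits, and reads the slope off the valuation of the Frobenius endomorphism.

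Your route is different and more self-contained. You work directly over $k$ with $H^1_{\mathrm{crys}}(X/W)$, its $G$-eigenspace decomposition and the Hodge filtration, so you need no descent and no CM dictionary. The reduction mod $p$ of your Step 2 is exactly the description of $F$ and $V$ in Theorem \ref{eo}, so one computation gives both the Newton polygon and the Ekedahl--Oort type. The cited formula, in exchange, covers arbitrary CM abelian varieties without needing rank-one eigenspaces. Steps 1 and 3 are fine.

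\paragraph{The error in Step 2.} As written, Step 2 is backwards, and your proposed calibration would not detect this.

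You yourself record $VM/pM = H^0(X,\Omega^1_X)$, i.e.\ $\ker(F \bmod p) = H^0(X,\Omega^1_X)$. Since $H^0(X,\Omega^1_X)\hookrightarrow H^1_{dR}(X)=M/pM$ is $G$-equivariant, label both by the same action. Then:
\begin{itemize}
\item If $f(j)=1$, the line $M_j/pM_j$ equals $H^0(\Omega^1)_j$ and is killed by $F$, so $F(M_j)=pM_{pj}$.
\item If $f(j)=0$, $F \bmod p$ is injective on $M_j/pM_j$, so $F(M_j)=M_{pj}$.
\end{itemize}
Hence $F(M_j)=p^{f(j)}M_{pj}$ and $V(M_{pj})=p^{1-f(j)}M_j$. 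So $V$ is onto $M_j$ exactly when the line is the $H^0(\Omega^1)$-piece, which is the opposite of what you wrote.

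Your rule, ``$F(M_j)=pM_{pj}$ iff $H^0(\Omega^1)_{pj}=0$'', has exponent $1-f(pj)$. Composed around the orbit it gives $p^{r-s}$, i.e.\ slope $1-\lambda$, contradicting your Step 3.

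The elliptic-curve checks cannot catch this. The orbit $\{1,2\}$ is stable under $j\mapsto -j$. In the ordinary case both rules give the Newton polygon $\{0,1\}$. Only the slope attached to a particular eigenline differs, and deciding that is precisely the Hodge-filtration argument above.

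Nor is there a real convention ambiguity. $F(M_j)\subseteq M_{pj}$ is forced by $\sigma([\zeta])=[\zeta]^p$ for any labeling, and using one $G$-action for both $H^0(\Omega^1)$ and $M$ fixes the index. With the corrected rule, $F^r(M_j)=p^{\sum_{i\in\mathcal{O}} f(i)}M_j=p^{s}M_j$, and your Step 3 goes through as planned.
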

\par To compute the slopes of the Newton polygon of Jac$(X)$, we compute the slope of the Newton polygon of Jac$(X)[p^\infty]_\mathcal{O}$ for each $\mathcal{O} \in \mathcal{O}_G$ using the Shimura-Taniyama formula. Each of the slopes appears as a slope of the Newton polygon of Jac$(X)$ with multiplicity $\#\mathcal{O}$. 

\par In \cite{kraft}, Kraft shows that $H^1_{dR}(X)$ can be decomposed uniquely into irreducible elements based on the actions of $F$ and $V$ on $H^1_{dR}(X)$. We describe the actions of $F$ and $V$ on $M$ below based on the Frobenius orbits:
\begin{theorem}[\protect{\cite[Definition 2.2]{lin_abelian_2024}}]
\label{eo}
    Let $\tau \in \mathcal{T}_G$ and identify it with an element of $G$. Let $\tau^\ast$ represent the conjugate transpose of $\tau$.  If $M_\tau \neq 0$, there exists an $\overline{\mathbf{F}_p}$-basis $\{e_{\tau} \}$ of $M_\tau$ such that the actions of $F$ and $V$ restricted to $M_\tau$ are given by
    \begin{align*}F(e_{\tau}) &= \begin{cases}
        e_{p\tau} & f(\tau^\ast) = 1 \\ 0 & f(\tau^\ast) = 0,
    \end{cases} & V(e_{p\tau}) &= \begin{cases}
        0 & f(\tau^\ast) = 1 \\
        e_{\tau} & f(\tau^\ast) = 0.
    \end{cases} \end{align*}
\end{theorem}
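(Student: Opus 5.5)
We would prove this by working one Frobenius orbit at a time in the cyclic case; by the decomposition of $H^1_{dR}(X)$ into the pieces $H^1_{dR}(X)_{\mathcal{O}}$ quoted above, this is enough. The plan has four steps.

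\textbf{Step 1: set-up.} $G\simeq\mathbf{Z}/m\mathbf{Z}$ acts on $H^1_{dR}(X)$ and on the Hodge filtration
$$0\to H^0(X,\Omega^1_X)\to H^1_{dR}(X)\to H^1(X,\mathcal{O}_X)\to 0,$$
compatibly with the eigenspace decompositions. We use three standard facts for the mod-$p$ Dieudonn\'e module:
\begin{itemize}
\item[(a)] $\ker F=\operatorname{Im} V=H^0(X,\Omega^1_X)$;
\item[(b)] $\ker V=\operatorname{Im} F$;
\item[(c)] $F$ is $p$-semilinear and $V$ is $p^{-1}$-semilinear.
\end{itemize}
By (c), $F(M_\tau)\subseteq M_{p\tau}$ and $V(M_{p\tau})\subseteq M_\tau$. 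We identify the character $\tau$ with an element of $G$ exactly as in the statement, keeping track of the conjugate $\tau^\ast$.

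\textbf{Step 2: each eigenspace is a line on one side of the filtration.} Take a character $\tau$ that is new, i.e.\ its order $d$ does not divide any $a_b$. The Chevalley--Weil formula gives $f(j)+f(-j)=1$, since $\langle x\rangle+\langle -x\rangle=1$ for each of the three branch points. By Serre duality, $H^1(X,\mathcal{O}_X)_\tau$ is dual to $H^0(X,\Omega^1_X)_{\tau^{-1}}$. Hence $M_\tau$ is one-dimensional, and it lies either inside $H^0(X,\Omega^1_X)$ or maps isomorphically onto $H^1(X,\mathcal{O}_X)_\tau$. Under the convention relating $\tau$ and $\tau^\ast$, the first case is $f(\tau^\ast)=0$ and the second is $f(\tau^\ast)=1$. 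Non-new characters come from quotient curves, and we handle them by induction on $m$, or simply discard them as the statement does.

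\textbf{Step 3: the shape of $F$ and $V$.}
\begin{itemize}
\item If $f(\tau^\ast)=0$, then $M_\tau\subseteq\ker F$, so $F(e_\tau)=0$. Also $M_\tau\subseteq\operatorname{Im}V$, and $V(M_{p\tau})\subseteq M_\tau$ since $V$ sends $M_{p\tau}$ into $M_\tau$. Together these force $V\colon M_{p\tau}\to M_\tau$ to be surjective, hence nonzero.
\item If $f(\tau^\ast)=1$, then $M_\tau\cap\ker F=0$, so $F(M_\tau)=M_{p\tau}$. This gives $M_{p\tau}\subseteq\operatorname{Im}F=\ker V$, so $V(M_{p\tau})=0$.
\end{itemize}
So on each step $\tau\to p\tau$ of the orbit, exactly one of $F$ or $V$ is an isomorphism of lines and the other is zero.

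\textbf{Step 4: normalising the basis (main obstacle).} We must choose the $e_\tau$ along the whole orbit $\tau,p\tau,\dots,p^{r-1}\tau$ so that every nonzero map sends basis vector to basis vector with coefficient exactly $1$.
\begin{itemize}
\item Fix $e_\tau$ arbitrarily and propagate along the orbit: set $e_{p\tau}:=F(e_\tau)$ on $F$-steps, and on $V$-steps let $e_{p\tau}$ be the unique preimage of $e_\tau$ under $V$.
\item Returning to $\tau$ produces a relation $\lambda e_\tau$ in place of $e_\tau$. Replacing $e_\tau$ by $c\,e_\tau$ changes $\lambda$ to $\lambda c^{p^{s}-1}$, up to the semilinear twists, for some integer $s$ determined by the numbers of $F$- and $V$-steps.
\item Since $\overline{\mathbf{F}_p}$ is algebraically closed, the equation $c^{p^s-1}=\lambda^{-1}$ (or the analogous Lang-type equation) has a nonzero solution.
\item If $s=0$, i.e.\ the $F$- and $V$-twists cancel, the equation becomes $c$-independent. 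In that case we would use Kraft's classification instead, which says indecomposable words are rigid up to isomorphism.
\end{itemize}
Checking this exponent bookkeeping carefully, together with the convention issue of $\tau$ versus $\tau^\ast$, is where we expect the real work to lie.
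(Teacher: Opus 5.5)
Your Steps 1--3 are correct and are the standard argument behind this statement. The paper itself gives no proof; it only cites Lin et al. Your dichotomy in Step 2 also matches the paper's convention. With $\tau^\ast=-\tau$ and $f(\tau)+f(-\tau)=1$, one gets $M_\tau\subseteq H^0(X,\Omega^1_X)$ exactly when $f(\tau^\ast)=0$. You can see this in the genus-$5$ example in the proof of Proposition~\ref{eke}, where $f(19)=0$ gives $F(e_1)=0$ and $V(e_{11})=e_1$.

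The step you flag as the main obstacle is mis-analysed, though in your favour. The twists from $F$-steps and $V$-steps never cancel. $F$ is $p$-semilinear, and on a $V$-step the inverse $V^{-1}\colon M_\tau\to M_{p\tau}$ is also $p$-semilinear, because $V(c^py)=cV(y)$. So replacing $e_\tau$ by $ce_\tau$ multiplies the propagated vector by a $p$-th power at every step. After one trip around an orbit of length $r=\#\mathcal{O}\geq 1$, the returned vector is $c^{p^r}\lambda e_\tau$.

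Hence your $s$ always equals $r$, however the steps split between $F$ and $V$. The equation $c^{p^r-1}=\lambda^{-1}$ always has a solution in $\overline{\mathbf{F}_p}^\times$, and the case $s=0$ never arises. This matters, because your Kraft fallback would not have worked as stated. Kraft's classification gives an abstract isomorphism with a standard module, not a basis of $G$-eigenvectors $e_\tau\in M_\tau$.

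There are two smaller points.

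\textbf{Non-new characters.} For a three-point cover, a nontrivial non-new $\tau$ is trivial on one $a_b$ and takes mutually inverse values on the other two. So $f(\tau)=f(\tau^\ast)=0$ and $M_\tau=0$. No induction on $m$ is needed: the hypothesis $M_\tau\neq 0$ already excludes these characters.

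\textbf{Non-cyclic $G$.} The orbit decomposition does not by itself reduce a non-cyclic $G$ to the cyclic case. What makes your argument apply is that $M_\tau$ is the $\tau$-part of $H^1_{dR}(X/\ker\tau)$, and $X/\ker\tau\to\mathbf{P}^1$ is a cyclic cover branched at most at three points. Equivalently, Chevalley--Weil computes $\dim H^0(X,\Omega^1_X)_\tau$ for each character. Your Steps 2--4 then go through verbatim.
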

\par The Ekedahl-Oort type is $\bigoplus_{\tau \in \mathcal{T}_G} M_\tau$.
\section{Computational Results}
\par \hspace{\parindent} In this section, we use SageMath and the results in \cite[Section 2.1]{booher_supersingular_2025} to generate all possible ramification and inertia types of abelian covers of $\mathbf{P}^1$ branched at three points of a given genus. We then compute the Newton polygons and Ekedahl-Oort types for these curves using the methods in sections 2.3 and 2.4. For a given genus, we then have all possible Newton polygons and Ekedahl-Oort types that can occur for covers of this form. This allows us to analyze exactly when supersingular curves, superspecial curves, unlikely Newton polygons, and unlikely Ekedahl-Oort types occur. We see that these curves occur far more often than initially thought. The code is available on Github \cite{ds}.

\subsection{Density Computations}

\begin{definition}
    \label{ssg}
    Let $SS_g$ be the set of primes $p$ such that there exists a supersingular curve $X$ of genus $g$ such that $X$ is an abelian cover of $\mathbf{P}^1$ branched at three points defined over $\overline{\mathbf{F}_p}$. Let $\delta_{ss}(g)$ denote the natural density of the primes in $SS_g$.
\end{definition}

\par Booher and Pries computed $\delta_{ss}(g)$ when $5 \leq g \leq 10$ in \cite[Corollary 1.1]{booher_supersingular_2025}. We extend the results here for $11 \leq g \leq 50$ and record them in Table 1. We note that if $g \geq 9$, a supersingular curve has an unlikely Newton polygon and is geometrically unexpected. However, in Table 1 we see that $\delta_{ss}(g) > 0.7$.

\begin{definition}
    \label{sspg}
    Let $SSP_g$ be the set of primes $p$ such that there exists a superspecial curve $X$ of genus $g$ such that $X$ is an abelian cover of $\mathbf{P}^1$ branched at three points defined over $\overline{\mathbf{F}_p}$. Let $\delta_{ssp}(g)$ denote the natural density of the primes in $SSP_g$.
\end{definition}

\par The superspecial primes for $g = 4,5$ were computed in \cite{ekedahl_supersingular_1987}. We give the value (or an underestimate of the value) of $\delta_{ssp}(g)$ for $5 \leq g \leq 50$ in Table 1.

\par The natural densities in Definitions \ref{ssg} and \ref{sspg} exist as the set of primes is given by congruence conditions.

\par For a given cover, if $e$ is the exponent of the Galois group, we get a list of congruence classes in $\mathbf{Z}/e\mathbf{Z}$ that categorize which primes give us the type of curve that we're interested in. To compute the density of those primes, we compute the least common multiple of all the exponents that occur $\ell$. We then use the Chinese Remainder Theorem to determine which congruence classes modulo $\ell$ that are coprime to $\ell$ satisfy the congruence conditions for any of the covers found. The bottleneck is that as the genus grows larger, there are generally more covers with larger Galois groups. This causes $\ell$ to grow very large very fast, so it becomes infeasible to keep track of all congruence classes modulo $\ell$. We have denoted which computations for which this happens in the table so that the reader is aware they are only estimates of the true value. 

\par From our computation, we noticed an omission in Ekedahl's list of primes for which there exists a superspecial curve of genus $5$ that is an abelian cover of $\mathbf{P}^1$ branched at three points. We correct it below.

\begin{proposition}\label{eke}
    There exists an abelian cover $\phi: X \to \mathbf{P}^1$ of genus $5$ branched at three points in characteristic $p$ such that $X$ is superspecial if and only if $p \equiv -1$ modulo $8,11,12,15,20$, $p \equiv 11$ modulo $15$, or $p \equiv 11$ modulo $20$.
\end{proposition}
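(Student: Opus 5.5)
The approach is a finite enumeration followed by an orbit computation for each candidate. First we list, via Riemann--Hurwitz and the classification of ramification and inertia types in \cite[Section 2.1]{booher_supersingular_2025}, every ramification type $[c_0,c_1,c_\infty,s]$ with $g=5$. Then we list the inertia types up to equivalence. The cyclic covers have degree $m$ with $2g+2 \le m \le 4g+2$ and $\mathrm{lcm}$ conditions forced by the genus formula. Since there are only finitely many of these, the possible Galois groups have exponents lying among a small explicit set; in particular the moduli $8,11,12,15,20$ together with the non-cyclic groups $\mathbf{Z}/c\times\mathbf{Z}/d$ of genus $5$ should account for all of them. For each cover we determine the signature type $f$ from the Chevalley--Weil formula and, in the non-cyclic case, the characters $\tau\in\mathcal{T}_G$ together with their restricted signatures.

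Next we express superspeciality as a condition on Frobenius orbits. By Theorem~\ref{eo} and the decomposition of $H^1_{dR}(X)$ over $\mathcal{O}_G$, $X$ is superspecial exactly when $F$ and $V$ have equal images on every orbit component, that is $\ker F=\ker V$, so that $H^1_{dR}$ is a sum of copies of the supersingular elliptic curve module. Following $e_\tau\mapsto e_{p\tau}$, this becomes a combinatorial condition. For every $\tau$ with $M_\tau\neq0$, we need $f(\tau^\ast)=1$ iff $f((p\tau)^\ast)=0$; equivalently, $f(j)+f(pj)=1$ along each orbit. For cyclic covers this says that multiplication by $p$ swaps $S_1$ with its complement among the relevant $j$. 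It depends only on $p$ modulo the exponent $e$ of $G$. We therefore tabulate, for each cover, the residue classes $p \bmod e$ (with $p\nmid e$) satisfying it. The class $p\equiv-1$ always works whenever $f(-j)=1-f(j)$, which holds for the holomorphic/antiholomorphic split, and this yields the $-1$ classes. Other classes arise when the orbit structure of $p$ permits the swap. The expected extra ones are $p\equiv 11 \pmod{15}$ and $p\equiv 11\pmod{20}$, the classes Ekedahl omitted; we check directly that $11$ has order $2$ modulo $15$ and $20$ and that it swaps $S_1$ appropriately for the corresponding covers, for instance $m=15$ with inertia type $(1,4,10)$ or an analogous choice.

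Finally we take the union over all covers and simplify. We verify that every other cover produces either no new classes or classes already contained in the listed ones. This shows the condition is also necessary, since any superspecial cover must appear in the enumeration.

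The main obstacle is completeness: we must be sure that every genus-$5$ ramification and inertia type, including the non-cyclic ones, has been enumerated and checked, so that no further congruence class arises. I would handle this by a careful Riemann--Hurwitz case bound and, in practice, by cross-checking against the SageMath enumeration described in Section~3.
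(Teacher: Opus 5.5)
Your proposal is correct in approach but broader in scope than the paper's proof. Your superspeciality criterion is right: by Theorem~\ref{eo}, both $\ker F$ and $\ker V$ are spanned by basis vectors $e_\tau$, and $\ker F=\ker V$ (i.e.\ $a$-number $g$) is exactly the condition $f(j)+f(pj)=1$ for all $j$ with $M_j\neq 0$.

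The paper does not redo the classification. It takes Ekedahl's list as correct apart from one omission and only exhibits a cover for the missing class. That cover is $[20,20,2,1]$ with inertia type $(1,9,10)$, which is the unique inertia type for that ramification type up to equivalence, so your ``analogous choice'' is forced. There $S_1=\{1,3,5,7,9\}$ and $11j\equiv j+10\pmod{20}$ for odd $j$, giving $F(e_{j+10})=V(e_{j+10})=e_j$. The ``only if'' direction is inherited from Ekedahl and the exhaustive search of Section~3.

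Your route rebuilds everything from the enumeration, essentially a hand version of that search. This makes necessity an actual argument rather than a citation, at the cost of a case analysis you have only sketched. A clean way to run it is to note that $f(-j)=1-f(j)$ on relevant $j$, so your condition says precisely that $-p$ stabilizes the signature $S_1$. Necessity then reduces to computing the stabilizer of $S_1$ in $(\mathbf{Z}/e\mathbf{Z})^\times$ for each cover. It is trivial for every genus-$5$ cover except $\{1,4\}$ for $[15,15,3,1]$, $(1,4,10)$ and $\{1,9\}$ for $[20,20,2,1]$, $(1,9,10)$, which yields exactly the listed classes.

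Some details in your sketch need fixing.
\begin{itemize}
\item Riemann--Hurwitz with $c_b\le m$ gives $2g-2\le m-3$, so the cyclic range is $2g+1\le m\le 4g+2$, not $2g+2\le m$. The degree-$11$ Fermat quotients $[11,11,11,1]$ have genus $5$, and omitting them would leave the necessity check incomplete.
\item The full list of Galois groups is $\mathbf{Z}/m\mathbf{Z}$ for $m\in\{11,12,15,20,22\}$, together with $\mathbf{Z}/8\mathbf{Z}\times\mathbf{Z}/2\mathbf{Z}$ (type $[8,8,4,2]$) and $\mathbf{Z}/12\mathbf{Z}\times\mathbf{Z}/2\mathbf{Z}$ (type $[12,12,2,2]$). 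So exponent $22$ also occurs. It adds nothing new, since for odd $p$ the condition $p\equiv-1\pmod{22}$ is the same as $p\equiv-1\pmod{11}$.
\item According to the paper, only $p\equiv 11\pmod{20}$ was missing from Ekedahl's list; $p\equiv 11\pmod{15}$ was already there. Your check of $(1,4,10)$ for that class is nonetheless correct.
\end{itemize}
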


\begin{proof}
    \par The only case not proven in \cite[Page 173]{ekedahl_supersingular_1987} is when $p \equiv 11$ modulo $20$. Consider the cover with ramification type $[20,20,2,1]$ and inertia type $(1,9,10)$. This cover has genus $5$ by \cite[Lemma 2.3]{booher_supersingular_2025}. The orbits of the action of multiplication by $p$ on $\{1,2, \ldots, 19\} \subseteq \mathbf{Z}/20\mathbf{Z}$ are as follows:
    \[\{\{1,11\}, \{2\}, \{3,13\}, \{4\}, \{5,15\}, \{6\}, \{7,17\}, \{8\}, \{9,19\}, \{10\}, \{12\}, \{14\}, \{16\}, \{18\} \}.\]
    \par The signature type $f(j) =1$ if and only if $j \in \{1,3,5,7,9\}$. Therefore, by Theorem \ref{eo} $F(e_{11}) = V(e_{11}) = e_1$, $F(e_{13}) = V(e_{13}) = e_3$, $F(e_{15}) =V(e_{15}) = e_5$, $F(e_{17}) = V(e_{17}) = e_7$, and $F(e_{19}) = V(e_{19}) = e_9$. We see that $H^1_{dR}(X)$ is the direct sum of 5 copies of $H^1_{dR}$ of a supersingular elliptic curve. Therefore, Jac$(X)$ is isomorphic to a product of $5$ supersingular elliptic curves, and $X$ is superspecial.
\end{proof}

\subsection{Unlikely Newton Polygons and Ekedahl-Oort Types}

\par \hspace{\parindent} We also investigates the prime density for covers that have unlikely Newton polygons and Ekedahl-Oort types. We give some theorems on the dimensions of some relevant moduli spaces first.
\begin{theorem}[\protect{\cite[Theorem 6.10]{pries_torelli}}]
    \label{npdim}

    Let $\xi$ be a symmetric Newton polygon of height $2g$. The dimension of $\mathcal{A}_g[\xi]$
is $\#\{(x, y) \in \mathbf{Z} \times \mathbf{Z} | y < x \leq g \textrm{ and } (x, y) \textrm{ lies on or above } \xi \}$.
\end{theorem}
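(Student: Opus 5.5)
The plan is to follow Oort's strategy: prove a lower bound and an upper bound for $\dim \mathcal{A}_g[\xi]$ separately, and show that both equal the lattice-point count
\[
\mathrm{sdim}(\xi) := \#\{(x,y) \in \mathbf{Z}\times\mathbf{Z} \mid y < x \leq g,\ (x,y) \textrm{ on or above } \xi\}.
\]
Here $\xi$ is the symmetric Newton polygon of the statement. For the ordinary polygon, $\mathrm{sdim}$ equals $g(g+1)/2 = \dim \mathcal{A}_g$. For the supersingular polygon $\sigma$ it equals $\lfloor g^2/4\rfloor$, which matches the Li--Oort dimension of the supersingular locus quoted in the remark above. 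These two extreme cases serve as anchors.

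\emph{Lower bound.} I would use de Jong--Oort purity together with Grothendieck--Katz specialization. Purity says the Newton polygon jumps in codimension at most one. So in any family over an irreducible base, the locus where the polygon lies on or above a given polygon has codimension at most one in the locus where it lies on or above the next coarser polygon. Next, take a maximal chain of symmetric Newton polygons $\sigma = \xi_0 \prec \xi_1 \prec \cdots \prec \xi_r = \xi$, each step raising $\mathrm{sdim}$ by exactly one; the combinatorial input is that $\mathrm{sdim}$ is a grading of the poset of symmetric polygons. Starting from a component of the supersingular locus, of dimension $\lfloor g^2/4\rfloor = \mathrm{sdim}(\sigma)$, and going up the chain, every component of $\mathcal{A}_g[\xi]$ containing a supersingular point then has dimension at least $\mathrm{sdim}(\xi)$. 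I would argue this dually, by cutting down from the ordinary stratum. To make the bound hold for every component, I need each component of each stratum to contain a supersingular point in its closure; I would take this from Oort's result that the supersingular locus lies in the closure of every Newton stratum.

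\emph{Upper bound.} I would use Oort's foliation. Up to a finite surjective morphism, each component of $\mathcal{A}_g[\xi]$ is a product of a central leaf (fixed $p$-divisible group up to isomorphism) and an isogeny leaf. So $\dim \mathcal{A}_g[\xi] \le \dim C(\xi) + \dim I(\xi)$. The central leaf dimension is computed by taking the minimal $p$-divisible group $H(\xi)$ and counting its polarized deformations with constant isomorphism class, via a slope-pair count $\sum_{\lambda<\lambda'} (\textrm{contributions})$. The isogeny leaf dimension is bounded by the dimension of the corresponding Rapoport--Zink type space, which is a Li--Oort style computation for each isoclinic block. Adding the two formulas, and checking lattice point by lattice point, should recover exactly $\mathrm{sdim}(\xi)$.

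\emph{Main obstacle.} I expect the hardest step to be the exact computation of the central leaf dimension and the matching combinatorial identity. The deformation-theoretic count is delicate for the polarized, self-dual slope pairs $\lambda, 1-\lambda$, and for the slope $1/2$ part. I would first verify the identity for polygons with two slopes $\lambda, 1-\lambda$, and then induct on the number of slopes, using that a minimal $p$-divisible group splits as a sum over slopes.
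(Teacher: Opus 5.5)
The paper does not prove this statement; it is Oort's theorem, quoted as a black box from Pries's survey. Judged on its own, your proposal has two genuine gaps.

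\textbf{The lower bound runs purity in the wrong direction.} Let $T$ be a component of the closed stratum and $x\in T$ supersingular. Iterating purity downward gives a chain $T=Y_0\supsetneq Y_1\supsetneq\cdots\supsetneq Y_k$ in which:
\begin{itemize}
\item each $Y_{i+1}$ is a codimension-one component of the jump locus of $Y_i$;
\item $Y_k\subseteq\mathcal{S}_g$ is generically supersingular;
\item $k\le \mathrm{sdim}(\xi)-\lfloor g^2/4\rfloor$, by strict monotonicity of $\mathrm{sdim}$.
\end{itemize}
This yields $\dim T=k+\dim Y_k\le \mathrm{sdim}(\xi)$, which is an \emph{upper} bound, valid for components meeting $\mathcal{S}_g$. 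Nothing forces $k$ to be maximal or $Y_k$ to be a full component of $\mathcal{S}_g$, so $\dim\mathcal{S}_g$ gives no lower bound.

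The correct lower bound is the argument you mention only in passing. Start from the irreducible, generically ordinary $\mathcal{A}_g$ and cut down. The chain must be dictated by the generic polygons of the successive jump components; purity does not say that the locus on or above a \emph{prescribed} next polygon has codimension at most one. This gives $\mathrm{codim}\,T\le$ maximal chain length $\le g(g+1)/2-\mathrm{sdim}(\xi)$ for every component, and needs no supersingular points. Separately, $\mathcal{S}_g\subseteq\overline{\mathcal{A}_g[\xi]}$ does not imply that every component of $\mathcal{A}_g[\xi]$ has a supersingular point in its closure.

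\textbf{The isogeny-leaf step fails, and this is the more serious gap.} The dimension of the isogeny leaf is not a blockwise Li--Oort count: the polarized Rapoport--Zink space of a non-basic $\xi$ has cross terms between slopes. Take $g=4$ and slopes $1/3,1/2,2/3$ with multiplicities $3,2,3$.
\begin{itemize}
\item The lattice count gives $\mathrm{sdim}(\xi)=5$, from the points $(2,1),(3,1),(3,2),(4,2),(4,3)$.
\item The central leaf has dimension $c(\xi)=\tfrac12\sum_{a<b}(\lambda_b-\lambda_a)+\sum_{\lambda_a>1/2}(\lambda_a-\tfrac12)=\tfrac52+\tfrac12=3$, summing over the $2g$ slopes listed with multiplicity.
\item Hence the isogeny leaves are $2$-dimensional.
\end{itemize}
Blockwise you get far less:
\begin{itemize}
\item the slope-$1/3$ and slope-$2/3$ blocks contribute $(m-1)(n-1)/2=0$;
\item the height-$2$ slope-$1/2$ block contributes $\lfloor 1/4\rfloor=0$;
\item even treating $\{1/3,2/3\}$ as one polarized block gives only $1$, since its $g=3$ stratum has $\mathrm{sdim}=3$ and $c=2$.
\end{itemize}
So your upper bound would read $\dim\le 4<5$, contradicting the lower bound.

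The missing input is the dimension formula $\dim X_\mu(b)=\langle\rho,\mu-\nu_b\rangle-\tfrac12\mathrm{def}(b)$ for affine Deligne--Lusztig varieties (G\"ortz--Haines--Kottwitz--Reuman, Viehmann). Combined with $c(\xi)=\langle 2\rho,\nu_b\rangle$ and the almost-product structure, it gives $\langle\rho,\mu+\nu_b\rangle-\tfrac12\mathrm{def}(b)=\mathrm{sdim}(\xi)$; this is Hamacher's route. As far as I recall, Oort's foliation paper obtains the isogeny-leaf dimension as $\mathrm{sdim}(\xi)-c(\xi)$. If so, quoting it here would be circular.
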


\begin{theorem}[\protect{\cite[Theorem 6.11]{pries_torelli}}]
    \label{eodim}
    Let $\xi$ be an Ekedahl-Oort Type represented as the final type $[\nu_1,\nu_2,\ldots,\nu_g]$ . The dimension of $\mathcal{A}_g[\xi]$ is $\sum_{i=1}^g \nu_i$.
\end{theorem}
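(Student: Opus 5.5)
The plan is to deduce the statement from the group-theoretic description of Ekedahl-Oort strata due to Moonen and Pink-Wedhorn-Ziegler, instead of a direct deformation computation. The argument has three steps: translate final types into Weyl group cosets, compute the dimension of each stratum as a length, and show that this length equals $\sum_i \nu_i$.

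\emph{Step 1: from final types to Weyl group elements.} Let $W$ be the Weyl group of $\mathrm{Sp}_{2g}$, that is, signed permutations of $\{1,\dots,2g\}$ commuting with $i\mapsto 2g+1-i$. Let $W_J\cong S_g$ be the Weyl group of the Siegel Levi. By Kraft's classification and Oort's canonical filtration, the isomorphism class of a $\mathrm{BT}_1$ with polarization, i.e.\ of the Dieudonn\'e module $H^1_{dR}$ with the $F$ and $V$ actions described in Theorem \ref{eo}, is determined by its final type $\nu=[\nu_1,\dots,\nu_g]$. Following Moonen, final types are in bijection with the set ${}^JW$ of minimal length representatives of $W_J\backslash W$. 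Explicitly, $\nu$ determines a permutation $w_\nu$: the basis vectors of the canonical filtration are sorted according to whether $F$ kills them, and $\nu_i$ records how many of the first $i$ graded pieces lie in $\mathrm{Im}(V)$.

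\emph{Step 2: dimension of each stratum.} We use the Pink-Wedhorn-Ziegler morphism $\zeta:\mathcal{A}_g\to \mathrm{Sp}_{2g}\text{-Zip}^{\mu}$, which sends an abelian variety to the $F$-zip given by its de Rham cohomology with the Hodge and conjugate filtrations. By construction, $\mathcal{A}_g[\xi]=\zeta^{-1}(\text{zip stratum of } w_\nu)$. The map $\zeta$ is smooth; this follows from Grothendieck-Messing and Serre-Tate, since deformations of $A$ correspond to deformations of the Hodge filtration, and the tangent map to the zip stack is surjective. Because $\zeta$ is smooth, codimensions are preserved. The zip stack is the quotient $[E_{\mathcal Z}\backslash G]$, and its stratum attached to $w\in{}^JW$ has dimension $\ell(w)-\dim P$. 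Hence the stratum has codimension $\dim(G/P)-\ell(w)$ in the zip stack. Since $\dim \mathcal{A}_g=g(g+1)/2=\dim G/P$, we get $\dim\mathcal{A}_g[\xi]=\ell(w_\nu)$. To know that the stratum is nonempty, equidimensional, and of the expected dimension, we also need each fiber-dimension count to be honest. For this we invoke that $\zeta$ is surjective on strata; alternatively, Oort's result that every EO type is realized on $\mathcal{A}_g$, together with the purity of the resulting flat family.

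\emph{Step 3: the length computation.} It remains to show $\ell(w_\nu)=\sum_{i=1}^g\nu_i$. Here $\ell$ is the type $C$ length, namely the number of inversions of the signed permutation counted appropriately. I would check this by explicitly writing $w_\nu$ as a shuffle. Since $w_\nu\in{}^JW$ is a minimal coset representative, its length equals the number of pairs $(i,j)$ with $i\le g<j$-type inversions, restricted by the symmetry. Each index $i$ contributes exactly $\nu_i$ such inversions, because $\nu_i$ counts how many of the first $i$ slots are ``moved past'' the middle. As sanity checks, the superspecial type $\nu=[0,\dots,0]$ gives dimension $0$, and the ordinary type $\nu=[1,2,\dots,g]$ gives $g(g+1)/2$. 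Both agree with the remarks earlier in the paper.

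The main obstacle is Step 1 together with its compatibility with Step 3: setting up the dictionary between final types and ${}^JW$ precisely, with the right conventions, so that the inversion count comes out to $\sum_i\nu_i$ and not to its complement $g(g+1)/2-\sum_i\nu_i$. I would fix the conventions first by matching the two extreme cases (superspecial and ordinary). I would then verify the $g=2$ case, with its four strata of dimensions $0,1,2,3$, before doing the general bookkeeping.
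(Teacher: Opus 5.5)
The paper gives no proof of this statement. It quotes it from Pries's survey, where it is Oort's theorem, so nothing in the paper depends on how it is proved.

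Your route is the standard group-theoretic re-proof: Moonen's coset classification, the Pink--Wedhorn--Ziegler zip stack, and smoothness of $\zeta$. Its outline is sound. It yields the conceptual statement $\dim\mathcal{A}_g[\xi]=\ell(w)$, which holds well beyond $\mathcal{A}_g$. The cost is heavy input and a translation between final types and Weyl group elements, which a direct argument on final types avoids.

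A few repairs are needed.
\begin{itemize}
\item The $E_{\mathcal{Z}}$-orbit of $w\in{}^JW$ has dimension $\dim P+\ell(w)$, and $\dim E_{\mathcal{Z}}=\dim G$. So the zip stratum has dimension $\ell(w)-\dim(G/P)$, not $\ell(w)-\dim P$. Your codimension $\dim(G/P)-\ell(w)$ is nevertheless correct.
\item In Step 1, $\nu_i=\dim V(N_i)$ for the final filtration $N_\bullet$. That is, $\nu_i$ is the number of the first $i$ graded pieces \emph{not killed} by $V$. Your gloss ``lying in $\mathrm{Im}(V)$'' is wrong, since $\mathrm{Im}(V)=N_g$ contains the first $g$ pieces.
\item For nonemptiness you need not import Oort. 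Smoothness already gives equidimensionality of the right dimension wherever a stratum is nonempty. Moreover $\zeta$ is open, its image contains the superspecial point $E^g$, and the $w=1$ stratum lies in the closure of every stratum. Hence the open image meets every stratum.
\end{itemize}

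Step 3 is easier than you fear, and your heuristic is exactly right for ${}^JW$. For $w\in{}^JW$ put $S=w^{-1}(\{1,\dots,g\})$. Then:
\begin{itemize}
\item $S\sqcup(2g+1-S)=\{1,\dots,2g\}$.
\item $w^{-1}\in W^J$.
\item $\ell(w)=\ell(w^{-1})=\sum_{b\in S,\,b>g}(b-g)$, which is the dimension of the Schubert cell of $\langle e_b : b\in S\rangle$ in the Lagrangian Grassmannian.
\end{itemize}
Since $w(a)>g$ if and only if $a\notin S$, if and only if $2g+1-a\in S$, we get
\[\sum_{i=1}^g\#\{a\le i: w(a)>g\}=\sum_{a\le g,\ 2g+1-a\in S}(g+1-a)=\sum_{b\in S,\,b>g}(b-g)=\ell(w).\]
So with $\nu_i=\#\{a\le i: w(a)>g\}$, ``each $i$ contributes $\nu_i$'' is literally true.

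What your plan does not yet supply is the proof that the polarized $\mathrm{BT}_1$ with final type $\nu$ lies in the zip stratum labelled by this $w$, in the normalization where orbits have dimension $\dim P+\ell(w)$. This same dictionary is hidden in your Step 2 claim that $\mathcal{A}_g[\xi]=\zeta^{-1}(\cdot)$ ``by construction.'' Matching the superspecial and ordinary cases only distinguishes this formula from its image under the length-reversing involution $w\mapsto w_{0,J}ww_0$ of ${}^JW$; it cannot rule out other bijections. You must either compute the canonical filtration of Moonen's standard Dieudonn\'e module attached to $w$ and read off $\nu$, or cite that computation. With that input the argument is complete.
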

We use these theorems to compute whether a Newton polygon or Ekedahl-Oort type resulting from an abelian cover is unlikely. We now define the following densities:

\begin{definition}
    Let $NU_g$ (respectively, $EU_g$) denote the set of primes $p$ such that there exists an abelian cover of $\mathbf{P}^1$ branched at three points of genus $g$ defined over $\overline{\mathbf{F}_p}$ that has an unlikely Newton polygon (respectively, unlikely Ekedahl-Oort type). Let $\delta_{nu}(g)$ (respectively, $\delta_{eu}(g)$) denote the natural density of $NU_g$ (respectively, $EU_g$).
\end{definition}

\begin{definition}
    Let $2NU_g$ (respectively, $2EU_g$) denote the set of primes $p$ such that there exists an abelian cover of $\mathbf{P}^1$ branched at three points of genus $g$ defined over $\overline{\mathbf{F}_p}$ with Newton polygon (respectively, Ekedahl-Oort type) $\xi$ such that the codimension of $\mathcal{A}_g[\xi]$ in $\mathcal{A}_g$ is greater than $2$ times the dimension of $\mathcal{M}_g$. Let $\delta_{2nu}(g)$ (respectively, $\delta_{2eu}(g)$) denote the natural density of $2NU_g$ (respectively, $2EU_g$).
\end{definition}

\par The choice of $2$ for $2NU_g$ and $2EU_g$ is somewhat arbitrary; we wanted to show that unlikely Newton polygons and Ekedahl-Oort types still occur even if the codimension of $\mathcal{A}_g[\xi]$ is much larger than the dimension of $\mathcal{M}_g$ to illustrate the robustness of the phenomena.

\par The density $\delta_{nu}(g)$ is in Table $1$ while $\delta_{2nu}(g)$, $\delta_{eu}(g)$, and $\delta_{2eu}(g)$ are in Table $2$. From the data in the table, if the genus is large enough the numerator of each of these densities is always one less than the denominator. Recall a curve is ordinary if its Newton polygon only has slopes $0$ or $1$ or if the only words appearing in its Ekedahl-Oort type are f and v. The strata of an ordinary abelian variety has codimension $0$ in $\mathcal{A}_g$, so the Newton polygon is not unlikely. If $\phi: X \to \mathbf{P}^1$ is an abelian cover of degree $m$ branched at three points, then if $p$ is a prime with $p \equiv 1$ modulo $m$, then $X$ will be an ordinary curve. This is because the multiplication by $p$ action on the elements of the Galois group of $\phi$ give orbits that are singletons, so by the Shimura-Taniyama formula the Newton polygon of $X$ can only have slopes $0$ and $1$. The data show that as long as the genus $g$ is large enough and $g \leq 50$, for every prime $p$ used in the computation that is not congruent to $1$ modulo the order of any Galois group of a cover of genus $g$, there exists a cover defined over $\overline{\mathbf{F}_p}$ that has an unlikely Newton polygon or unlikely Ekedahl-Oort type. This is led us to make Conjecture \ref{unlikely-density}.

\begin{table}
\centering

\begin{adjustbox}{width=\textwidth}
\begin{tabular}{|c|c|c|c|c|c|c|} \hline
$g$ & $\delta_{ss}(g)$ & $\approx \delta_{ss}(g)$ & $\delta_{ssp}(g)$ & $\approx \delta_{ssp}(g)$ & $\delta_{nu}(g)$ & $\approx \delta_{nu}(g)$ \\ \hline
$5$ & 25/32 & $0.7812$ & 97/160 & $0.6062$ & 0 & $0.0000$ \\ \hline
$6$ & 507/512 & $0.9902$ & 347/512 & $0.6777$ & 0 & $0.0000$ \\ \hline
$7$ & 3/4 & $0.7500$ & 29/48 & $0.6042$ & 0 & $0.0000$ \\ \hline
$8$ & 1023/1024 & $0.9990$ & 589/1024 & $0.5752$ & 0 & $0.0000$ \\ \hline
$9$ & 15/16 & $0.9375$ & 871/1296 & $0.6721$ & 15/16 & $0.9375$ \\ \hline
$10$ & 31/32 & $0.9688$ & 131/200 & $0.6550$ & 31/32 & $0.9688$ \\ \hline
$11$ & 7/8 & $0.8750$ & 19/40 & $0.4750$ & 7/8 & $0.8750$ \\ \hline
$12$ & 1019/1024 & $0.9951$ & 15713/27648 & $0.5683$ & 3067/3072 & $0.9984$ \\ \hline
$13$ & 183/256 & $0.7148$ & 3709/6912 & $0.5366$ & 631/768 & $0.8216$ \\ \hline
$14$ & 1001/1024 & $0.9775$ & 3415/7168 & $0.4764$ & 3573/3584 & $0.9969$ \\ \hline
$15$ & 121/128 & $0.9453$ & 1079/1920 & $0.5620$ & 4319/4320 & $0.9998$ \\ \hline
$16$ & 8189/8192 & $0.9996$ & 4547/8192 & $0.5551$ & 40957/40960 & $0.9999$ \\ \hline
$17$ & 65/128 & $0.5078$ & 283/768 & $0.3685$ & 371/384 & $0.9661$ \\ \hline
$18$ & 2045/2048 & $0.9985$ & 563419/1119744 & $0.5032$ & 6718463/6718464 & $>0.9999$ \\ \hline
$19$ & 3/4 & $0.7500$ & 697/1296 & $0.5378$ & 6911/6912 & $0.9999$ \\ \hline
$20$ & 1021/1024 & $0.9971$ & 3437/8000 & $0.4296$ & 127999/128000 & $>0.9999$ \\ \hline
$21$ & 353/384 & $0.9193$ & 121151/282240 & $0.4292$ & 423359/423360 & $>0.9999$ \\ \hline
$22$ & 61/64 & $0.9531$ & 7751/17600 & $0.4404$ & 3839/3840 & $0.9997$ \\ \hline
$23$ & 7/8 & $0.8750$ & 3907/16192 & $0.2413$ & 4047/4048 & $0.9998$ \\ \hline
$24$ & 32763/32768 & $0.9998$ & \textgreater102447/229376 & $0.4466$ & 7741439/7741440 & $>0.9999$ \\ \hline
$25$ & 17529/20480 & $0.8559$ & 10083/20480 & $0.4923$ & 921599/921600 & $>0.9999$ \\ \hline
$26$ & 497/512 & $0.9707$ & 7303/19968 & $0.3657$ & 99839/99840 & $>0.9999$ \\ \hline
$27$ & 121/128 & $0.9453$ & 87493/207360 & $0.4219$ & 933119/933120 & $>0.9999$ \\ \hline
$28$ & 4061/4096 & $0.9915$ & 1937/4096 & $0.4729$ & 290303/290304 & $>0.9999$ \\ \hline
$29$ & 405/512 & $0.7910$ & 28335/103936 & $0.2726$ & 38975/38976 & $>0.9999$ \\ \hline
$30$ & \textgreater2549/2560 & $0.9957$ & \textgreater1365281/3686400 & $0.3704$ & \textgreater15551999/15552000 & $>0.9999$ \\ \hline
$31$ & 25/32 & $0.7812$ & 4073/9600 & $0.4243$ & 345599/345600 & $>0.9999$ \\ \hline
$32$ & 4095/4096 & $0.9998$ & 2927/8192 & $0.3573$ & 8847359/8847360 & $>0.9999$ \\ \hline
$33$ & \textgreater1973/2048 & $0.9634$ & \textgreater45389/139392 & $0.3256$ & \textgreater8363519/8363520 & $>0.9999$ \\ \hline
$34$ & 1497/2048 & $0.7310$ & 25565/67584 & $0.3783$ & 168959/168960 & $>0.9999$ \\ \hline
$35$ & 439/512 & $0.8574$ & 80319/224000 & $0.3586$ & 1007999/1008000 & $>0.9999$ \\ \hline
$36$ & \textgreater1023/1024 & $0.9990$ & \textgreater47497/93312 & $0.5090$ & \textgreater13436927/13436928 & $>0.9999$ \\ \hline
$37$ & \textgreater183/256 & $0.7148$ & \textgreater102941/311040 & $0.3310$ & \textgreater5598719/5598720 & $>0.9999$ \\ \hline
$38$ & 239/256 & $0.9336$ & 26683/103680 & $0.2574$ & 207359/207360 & $>0.9999$ \\ \hline
$39$ & 2023/2304 & $0.8780$ & 236209/808704 & $0.2921$ & 4852223/4852224 & $>0.9999$ \\ \hline
$40$ & \textgreater81871/81920 & $0.9994$ & \textgreater4201/10240 & $0.4103$ & \textgreater6911999/6912000 & $>0.9999$ \\ \hline
$41$ & \textgreater197/256 & $0.7695$ & \textgreater42833/131200 & $0.3265$ & \textgreater1574399/1574400 & $>0.9999$ \\ \hline
$42$ & \textgreater287/288 & $0.9965$ & \textgreater67/256 & $0.2617$ & \textgreater5419007/5419008 & $>0.9999$ \\ \hline
$43$ & 3/4 & $0.7500$ & 117/448 & $0.2612$ & 564479/564480 & $>0.9999$ \\ \hline
$44$ & \textgreater1021/1024 & $0.9971$ & \textgreater4923/17600 & $0.2797$ & \textgreater4646399/4646400 & $>0.9999$ \\ \hline
$45$ & \textgreater1117/1152 & $0.9696$ & \textgreater788639/2073600 & $0.3803$ & \textgreater13996799/13996800 & $>0.9999$ \\ \hline
$46$ & \textgreater15/16 & $0.9375$ & \textgreater35659/105600 & $0.3377$ & \textgreater6071999/6072000 & $>0.9999$ \\ \hline
$47$ & \textgreater51/64 & $0.7969$ & \textgreater59071/264960 & $0.2229$ & \textgreater794879/794880 & $>0.9999$ \\ \hline
$48$ & \textgreater262141/262144 & $>0.9999$ & \textgreater565847/2064384 & $0.2741$ & \textgreater15482879/15482880 & $>0.9999$ \\ \hline
$49$ & \textgreater25/32 & $0.7812$ & \textgreater110717/286720 & $0.3862$ & \textgreater7741439/7741440 & $>0.9999$ \\ \hline
$50$ & \textgreater63/64 & $0.9844$ & \textgreater3911/14400 & $0.2716$ & \textgreater5759999/5760000 & $>0.9999$ \\ \hline

\end{tabular}

\end{adjustbox}

\caption{Supersingular Covers, Superspecial Covers, and Covers with Unlikely Newton Polygons}
\end{table}

\begin{table}
\centering

\begin{adjustbox}{width=\textwidth}
\begin{tabular}{|c|c|c|c|c|c|c|} \hline
$g$ & $\delta_{2nu}(g)$ & $\approx \delta_{2nu}(g)$ & $\delta_{eu}(g)$ & $\approx \delta_{eu}(g)$ & $\delta_{2eu}(g)$ & $\approx \delta_{2eu}(g)$ \\ \hline
$5$ & 0 & $0.0000$ & 139/160 & $0.8688$ & 0 & $0.0000$ \\ \hline
$6$ & 0 & $0.0000$ & 507/512 & $0.9902$ & 0 & $0.0000$ \\ \hline
$7$ & 0 & $0.0000$ & 7/8 & $0.8750$ & 0 & $0.0000$ \\ \hline
$8$ & 0 & $0.0000$ & 2045/2048 & $0.9985$ & 0 & $0.0000$ \\ \hline
$9$ & 0 & $0.0000$ & 3887/3888 & $0.9997$ & 0 & $0.0000$ \\ \hline
$10$ & 0 & $0.0000$ & 5759/5760 & $0.9998$ & 69/80 & $0.8625$ \\ \hline
$11$ & 0 & $0.0000$ & 879/880 & $0.9989$ & 41/55 & $0.7455$ \\ \hline
$12$ & 0 & $0.0000$ & 34559/34560 & $>0.9999$ & 2291/2304 & $0.9944$ \\ \hline
$13$ & 0 & $0.0000$ & 1727/1728 & $0.9994$ & 559/768 & $0.7279$ \\ \hline
$14$ & 0 & $0.0000$ & 2687/2688 & $0.9996$ & 2651/2688 & $0.9862$ \\ \hline
$15$ & 0 & $0.0000$ & 172799/172800 & $>0.9999$ & 85861/86400 & $0.9938$ \\ \hline
$16$ & 0 & $0.0000$ & 61439/61440 & $>0.9999$ & 20443/20480 & $0.9982$ \\ \hline
$17$ & 0 & $0.0000$ & 575/576 & $0.9983$ & 89/96 & $0.9271$ \\ \hline
$18$ & 0 & $0.0000$ & 13436927/13436928 & $>0.9999$ & 1679615/1679616 & $>0.9999$ \\ \hline
$19$ & 0 & $0.0000$ & 20735/20736 & $>0.9999$ & 431/432 & $0.9977$ \\ \hline
$20$ & 0 & $0.0000$ & 1151999/1152000 & $>0.9999$ & 63999/64000 & $>0.9999$ \\ \hline
$21$ & 353/384 & $0.9193$ & 1693439/1693440 & $>0.9999$ & 21167/21168 & $>0.9999$ \\ \hline
$22$ & 61/64 & $0.9531$ & 316799/316800 & $>0.9999$ & 6329/6336 & $0.9989$ \\ \hline
$23$ & 7/8 & $0.8750$ & 48575/48576 & $>0.9999$ & 97111/97152 & $0.9996$ \\ \hline
$24$ & \textgreater229357/229376 & $0.9999$ & \textgreater15482879/15482880 & $>0.9999$ & \textgreater4423679/4423680 & $>0.9999$ \\ \hline
$25$ & 97757/102400 & $0.9547$ & 16588799/16588800 & $>0.9999$ & 921599/921600 & $>0.9999$ \\ \hline
$26$ & 19931/19968 & $0.9981$ & 10782719/10782720 & $>0.9999$ & 59903/59904 & $>0.9999$ \\ \hline
$27$ & 1909/1920 & $0.9943$ & 1866239/1866240 & $>0.9999$ & 466559/466560 & $>0.9999$ \\ \hline
$28$ & 773989/774144 & $0.9998$ & 1161215/1161216 & $>0.9999$ & 290303/290304 & $>0.9999$ \\ \hline
$29$ & 51867/51968 & $0.9981$ & 467711/467712 & $>0.9999$ & 38975/38976 & $>0.9999$ \\ \hline
$30$ & \textgreater2879797/2880000 & $0.9999$ & \textgreater15551999/15552000 & $>0.9999$ & \textgreater6911999/6912000 & $>0.9999$ \\ \hline
$31$ & 28681/28800 & $0.9959$ & 2764799/2764800 & $>0.9999$ & 345599/345600 & $>0.9999$ \\ \hline
$32$ & 61439/61440 & $>0.9999$ & 8847359/8847360 & $>0.9999$ & 8847359/8847360 & $>0.9999$ \\ \hline
$33$ & \textgreater2230271/2230272 & $>0.9999$ & \textgreater16727039/16727040 & $>0.9999$ & \textgreater8363519/8363520 & $>0.9999$ \\ \hline
$34$ & 202619/202752 & $0.9993$ & 3041279/3041280 & $>0.9999$ & 168959/168960 & $>0.9999$ \\ \hline
$35$ & 671917/672000 & $0.9999$ & 2015999/2016000 & $>0.9999$ & 335999/336000 & $>0.9999$ \\ \hline
$36$ & \textgreater3359231/3359232 & $>0.9999$ & \textgreater20155391/20155392 & $>0.9999$ & \textgreater6718463/6718464 & $>0.9999$ \\ \hline
$37$ & \textgreater34549/34560 & $0.9997$ & \textgreater5598719/5598720 & $>0.9999$ & \textgreater2799359/2799360 & $>0.9999$ \\ \hline
$38$ & 69107/69120 & $0.9998$ & 1244159/1244160 & $>0.9999$ & 207359/207360 & $>0.9999$ \\ \hline
$39$ & 269563/269568 & $>0.9999$ & 9704447/9704448 & $>0.9999$ & 4852223/4852224 & $>0.9999$ \\ \hline
$40$ & \textgreater2303993/2304000 & $>0.9999$ & \textgreater16588799/16588800 & $>0.9999$ & \textgreater6911999/6912000 & $>0.9999$ \\ \hline
$41$ & \textgreater6296393/6297600 & $0.9998$ & \textgreater6297599/6297600 & $>0.9999$ & \textgreater1574399/1574400 & $>0.9999$ \\ \hline
$42$ & \textgreater5418991/5419008 & $>0.9999$ & \textgreater10838015/10838016 & $>0.9999$ & \textgreater5419007/5419008 & $>0.9999$ \\ \hline
$43$ & 3919/3920 & $0.9997$ & 1128959/1128960 & $>0.9999$ & 564479/564480 & $>0.9999$ \\ \hline
$44$ & 619517/619520 & $>0.9999$ & \textgreater18585599/18585600 & $>0.9999$ & \textgreater4646399/4646400 & $>0.9999$ \\ \hline
$45$ & \textgreater5598719/5598720 & $>0.9999$ & \textgreater27993599/27993600 & $>0.9999$ & \textgreater13996799/13996800 & $>0.9999$ \\ \hline
$46$ & \textgreater1011999/1012000 & $>0.9999$ & \textgreater12143999/12144000 & $>0.9999$ & \textgreater6071999/6072000 & $>0.9999$ \\ \hline
$47$ & \textgreater26495/26496 & $>0.9999$ & \textgreater1589759/1589760 & $>0.9999$ & \textgreater794879/794880 & $>0.9999$ \\ \hline
$48$ & \textgreater1935359/1935360 & $>0.9999$ & \textgreater46448639/46448640 & $>0.9999$ & \textgreater7741439/7741440 & $>0.9999$ \\ \hline
$49$ & \textgreater806399/806400 & $>0.9999$ & \textgreater7741439/7741440 & $>0.9999$ & \textgreater7741439/7741440 & $>0.9999$ \\ \hline
$50$ & \textgreater1919999/1920000 & $>0.9999$ & \textgreater5759999/5760000 & $>0.9999$ & \textgreater5759999/5760000 & $>0.9999$ \\ \hline
\end{tabular}

\end{adjustbox}
\caption{Covers with Unlikely Newton polygons and Unlikely Ekedahl-Oort Types}

\end{table}

\section{Unlikely Newton Polygons}

\par \hspace{\parindent}Using the algorithm to compute Newton polygons of covers, we found repeated occurences of certain kinds of Newton polygons, including supersingular Newton polygons and unlikely Newton polygons with large denominators. In this section, we prove Theorem \ref{slopes}, which formalizes the patterns we saw in the data. We then prove the corollaries that follow from the theorem.

\par Before proving Theorem \ref{slopes}, we state a specific counterexample to Expectation \ref{expectation1} that does not arise from Theorem \ref{slopes}.

\subsection{A New Newton Polygon}

\par \hspace{\parindent} Pries notes in \cite[Section 5.3]{pries_newtonpolygons} that a curve of genus $12$ with Newton polygon that only has slopes $5/12$ and $7/12$ is only known to exist in characteristic $2$. Expectation \ref{expectation1} says that no curve with this Newton polygon exists. Through the Shimura-Taniyama method we find that this Newton polygon also occurs for a curve in some odd characteristics.

\begin{proposition}
    There is a curve with Newton polygon in characteristic $p$ with slopes $5/12$ and $7/12$, contrary to Expectation \ref{expectation1}, when $p \equiv 3,12,17,33$ modulo $35$. 
\end{proposition}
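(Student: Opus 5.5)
The plan is to realize this Newton polygon with a single cyclic cover of degree $m = 35$ and read off its slopes from the Shimura-Taniyama formula. First I fix the ramification data. By the Riemann-Hurwitz formula (as in \cite[Lemma 2.3]{booher_supersingular_2025}), a cyclic cover of degree $35$ with inertia orders $(c_0,c_1,c_\infty) = (35,7,5)$ has
\[
2g-2 = -70 + 34 + 30 + 28 = 22,
\]
so $g = 12$. I will take an inertia type such as $(a_0,a_1,a_\infty) = (23,5,7)$: here $5$ has order $7$, $7$ has order $5$, $23$ has order $35$, and the three entries sum to $0$ modulo $35$.

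With this choice, only the $j$ of order $35$ contribute to $S_1$. These are the $24$ units modulo $35$, matching $2g = 24 = \varphi(35)$, so $\textrm{Jac}(X)$ has CM by $\mathbf{Q}(\zeta_{35})$ alone. Next I compute the signature type with the Chevalley-Weil formula
\[
f(j) = -1 + \langle -23j/35\rangle + \langle -5j/35\rangle + \langle -7j/35\rangle
\]
for each unit $j$. This yields a set $S_1$ of exactly $12$ units; it must have that size since $\dim H^0 = 12$ and the non-unit eigenspaces vanish.

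Then I analyze the Frobenius orbits. We have $(\mathbf{Z}/35\mathbf{Z})^\times \cong \mathbf{Z}/4 \times \mathbf{Z}/6$, which has exponent $12$. The residues $3,12,17,33$ should be exactly the classes of order $12$ for which the orbit split works. For example, $3$ has order $4$ modulo $5$ and order $6$ modulo $7$, hence order $12$ modulo $35$. For such $p$, the units split into two cosets of $\langle p\rangle$, each of size $12$. By the Shimura-Taniyama formula each coset contributes a single slope $\#(\mathcal{O}\cap S_1)/12$ with multiplicity $12$. Since $S_1$ and $-S_1$ partition the units, the two slopes are complementary. So it suffices to check that one coset meets $S_1$ in exactly $5$ elements, giving slopes $5/12$ and $7/12$. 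Because $3^k$ for $k \in \{1,5,7,11\}$ gives $3, 33, 17, 12$ modulo $35$, these four residues generate the same subgroup $\langle 3\rangle$. Hence one orbit computation, for $p \equiv 3$, covers all four cases.

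The main obstacle is the counting itself: verifying that $\langle 3\rangle$ meets $S_1$ in $5$ or $7$ elements rather than $6$. If the chosen inertia type gives $6$, the polygon is supersingular. In that case I would try the other inequivalent inertia types with the same ramification orders, for example $a_1 \in \{5,10,15\}$ and $a_\infty \in \{7,14\}$ with $a_0$ determined, and repeat the computation. This is a finite, routine check of $12$ fractional-part sums.

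Finally, I note that the denominator $12 > 11$ and that the polygon is unlikely. By Theorem \ref{npdim}, the stratum of slopes $5/12, 7/12$ has small dimension, so its codimension in $\mathcal{A}_{12}$ exceeds $\dim \mathcal{M}_{12} = 33$; this makes the curve contrary to Expectation \ref{expectation1}.
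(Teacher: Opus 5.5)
Your proposal is essentially the paper's proof: the paper uses the same ramification type $[35,7,5,1]$ with inertia type $(1,20,14)$, i.e.\ $y^{35}=x(1-x)^{20}$, which is equivalent to your $(23,5,7)=23\cdot(1,20,14)$, and it reads off the slopes from the two cosets of $\langle 3\rangle$ via Shimura--Taniyama. The count you left unverified does work out: for your inertia type $S_1=\{1,2,8,11,16,17,22,23,26,29,31,32\}$, and $\langle 3\rangle\cap S_1=\{1,11,16,17,29\}$ has exactly five elements; your observation that $3,12,17,33$ generate the same subgroup also covers all four residue classes at once, whereas the paper only carries out the case $p\equiv 3$.
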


\par We demonstrate this when $p \equiv 3$ modulo $35$.

\begin{proof}
    The genus 12 curve has the equation 
    \[y^{35} = x(1-x)^{20}.\]
    
    The ramification type and inertia type of this curve is $[35,7,5,1]$ and $(1,20,14)$, respectively. If $p \equiv 3$ modulo $35$, then the orbits of the action of multiplication by $p$ on $\{1,2,3,\ldots,34\}$ are
    \begin{align*}\{&\{1,3,9,27,11,33,29,17,16,13,4,12\}, \{2,6,18,19,22,31,23,34,32,26,8,24\} , \\ \{&5,15,10,30,20,25\}, \{7,21,28,14\}\}
    \end{align*}
    
    As $5$ and $7$ have orders $7$ and $5$, respectively, they aren't relevant in the Shimura-Taniyama method. We then see that the signature type $f(j)=1$ if and only if $j \in \{1,2,3,4,6,8,9,11,13,16,18,23\}$. Therefore, the first orbit has $7$ elements in it with signature type $1$ and the second orbit has $5$ elements in it with signature type 1, so the Shimura-Taniyama formula tells us that the Newton polygon only has slopes $5/12$ and $7/12$.
\end{proof}

\subsection{Quadratic Excesses and Newton Polygons of Covers} \label{qe}
\par \hspace{\parindent} In order to prove Theorem \ref{slopes}, we define the quadratic excess of an interval.

\begin{definition}
    Let $\ell$ be a prime and $a,b \in \mathbf{R}$ with $a < b$. The \textit{quadratic excess} of the interval $(a,b)$ is \[q(a,b) := \sum_{a < n < b} \left (\frac{n}{\ell} \right )\]
\end{definition}

\par The quadratic excess of an interval is the number of non-zero quadratic residues minus the number of quadratic nonresidues in the interval.

\par We prove a lemma that will be useful in the proof of the theorem.

\begin{lemma}
\label{lemma}
    Let $\ell > 3$ be a prime. If $n$ and $r$ are positive integers such that $n \geq r$, then \begin{align}\left (\frac{-1}{\ell} \right ) q\left(\frac{\ell r}{4n}, \frac{4\ell r}{4n} \right) = \sum_{i=1}^3 q\left (\frac{\ell(in-r)}{4n}, \frac{i\ell n}{4n} \right) \label{lemma-align}. \end{align}
\end{lemma}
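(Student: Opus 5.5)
The plan is to prove \eqref{lemma-align} by multiplying by $4$ modulo $\ell$. Since $4$ is a square, $\left(\frac{4m}{\ell}\right)=\left(\frac{m}{\ell}\right)$. Put $x := \frac{\ell r}{4n}$. Because $n\ge r$, we have $0<x\le \ell/4$, so $0<4x\le \ell$. In this notation the left-hand side is $\left(\frac{-1}{\ell}\right)q(x,4x)$, and the $i$-th term on the right is $q\left(\frac{i\ell}{4}-x,\frac{i\ell}{4}\right)$.

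\emph{Step 1 (unfolding by $4$).} Consider the integers $k$ in the open interval $(\ell-4x,\ell)$. For each $i\in\{1,2,3,4\}$, the map $m\mapsto k=4m-(i-1)\ell$ sends the integers $m\in\left(\frac{i\ell}{4}-x,\frac{i\ell}{4}\right)$ into $(\ell-4x,\ell)$, because strict inequalities are preserved. Conversely, since $\ell$ is odd, each such $k$ has exactly one $i\in\{1,2,3,4\}$ with $4\mid k+(i-1)\ell$. The map therefore gives a bijection from the disjoint union of the four intervals onto $(\ell-4x,\ell)\cap\mathbf{Z}$. Since $k\equiv 4m \pmod \ell$, the Legendre symbol is preserved, and hence
\[
\sum_{i=1}^{4} q\left(\frac{i\ell}{4}-x,\frac{i\ell}{4}\right)=q(\ell-4x,\ell).
\]

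\emph{Step 2 (reflection).} The substitution $j=\ell-k$, together with $\left(\frac{-j}{\ell}\right)=\left(\frac{-1}{\ell}\right)\left(\frac{j}{\ell}\right)$, gives $q(\ell-4x,\ell)=\left(\frac{-1}{\ell}\right)q(0,4x)$. In the same way, the $i=4$ term satisfies $q(\ell-x,\ell)=\left(\frac{-1}{\ell}\right)q(0,x)$. Subtracting this term, the sum over $i=1,2,3$ equals $\left(\frac{-1}{\ell}\right)\bigl(q(0,4x)-q(0,x)\bigr)$.

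\emph{Step 3 (the endpoint).} We have $q(0,4x)-q(0,x)=q(x,4x)+\left(\frac{x}{\ell}\right)\cdot\mathbbm{1}_{x\in\mathbf{Z}}$, so the one delicate point is whether $x$ is an integer. This is the step I expect to need care. If $\ell\nmid n$, which is the situation of Theorem \ref{slopes} where $\gcd(n,\ell)=1$, then $x\in\mathbf{Z}$ would force $\ell\mid x$. That is impossible because $0<x<\ell$, so the extra term vanishes and the identity follows.

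I would therefore state, or check, that the hypothesis $\gcd(n,\ell)=1$ from the ambient setting is in force. Without it the identity can fail: for $\ell=5$, $n=5$, $r=4$ the left side is $-2$ while the right side is $-1$. Some version of this assumption is really needed.
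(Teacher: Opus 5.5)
Your argument is correct, and it takes a different and shorter route than the paper's. The paper inducts on $r$. It invokes Johnson's Main Theorem, which is itself a multiplication-by-$4$ unfolding, on $(0,\ell/n)$ for $r=1$ and on each block $(\ell a/n,\ell(a+1)/n)$ for the step $a\to a+1$, and then glues adjacent open intervals together. You unfold the whole interval $(\ell-4x,\ell)$ at once and reflect. This avoids both the induction and the citation, and it produces the exact identity
\[
\sum_{i=1}^{3} q\left(\frac{i\ell}{4}-x,\frac{i\ell}{4}\right)=\left(\frac{-1}{\ell}\right)\left(q(x,4x)+\epsilon\right),\qquad \epsilon=\left(\frac{x}{\ell}\right)\text{ if } x\in\mathbf{Z},\quad \epsilon=0\text{ otherwise},
\]
valid for all positive $n,r$.

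This correction term is what the paper's proof loses. When it adds the $q$'s of adjacent open intervals, it drops the endpoint contributions at $\ell a/n$, $\ell(a+1)/(4n)$ and $\ell(in-a)/(4n)$. These vanish when $\gcd(n,\ell)=1$ and $1\le a<n$, but not in general.

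Your counterexample is right. For $\ell=n=5$, $r=4$, the left side is $q(1,4)=-2$ and the right side is $1-1-1=-1$. So the lemma as printed is false, and it needs a hypothesis such as $\gcd(n,\ell)=1$, or precisely $\ell r/(4n)\notin\mathbf{Z}$. Theorem \ref{slopes} assumes $n$ is coprime to $\ell$, so the application there is unaffected.

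Your formulation also shows that under $\gcd(n,\ell)=1$ the hypothesis $n\ge r$ is not needed. In that case an integral $x$ would be a multiple of $\ell$, so $\epsilon=0$ anyway.
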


\begin{proof}

    \par First, we note that
    
    \begin{align}q\left(\frac{\ell(n+r-1)}{4n}, \frac{\ell(n+r)}{4n} \right) = \left( \frac{-1}{\ell} \right) q \left(\frac{\ell(3n-r)}{4n}, \frac{\ell(3n-r+1)}{4n} \right). \label{legendre} \end{align}

    \par We proceed by induction. If $r = 1$, then by \cite[Main Theorem]{johnson_symmetries_1977} 
    
    \begin{align} \begin{split} \left(\frac{4}{\ell} \right)q \left(0, \frac{\ell}{n} \right) &=  q \left(0, \frac{\ell}{n} \right) =q \left(0, \frac{\ell}{4n} \right) + \left(\frac{-1}{\ell} \right)q\left(\frac{\ell(n-1)}{4n}, \frac{\ell n}{4n} \right) \\ &+ q \left(\frac{\ell n}{4n}, \frac{\ell(n+1)}{4n} \right) + \left(\frac{-1}{\ell} \right)q\left(\frac{\ell(2n-1)}{4n}, \frac{2\ell n}{4n} \right). \\
    \end{split} \label{jmthm} \end{align}
    
    \par Substitute equation (\ref{legendre}) with $r=1$ into equation (\ref{jmthm}), and after rearranging we get \begin{align*}
        \left(\frac{-1}{\ell} \right)q\left(\frac{\ell}{4n}, \frac{\ell}{n} \right) &= q\left(\frac{\ell(n-1)}{4n}, \frac{\ell n}{4n} \right) + q\left(\frac{\ell(2n-1)}{4n}, \frac{2\ell n}{4n} \right) + q\left(\frac{\ell(3n-1)}{4n}, \frac{3\ell n}{4n} \right),
    \end{align*}
    which is equation (\ref{lemma-align}) when $r = 1$.
    \par Now, assume that for $r = a < n$, equation (\ref{lemma-align}) holds. Then by \cite[Main Theorem]{johnson_symmetries_1977} we have that \begin{align} \begin{split} q \left (\frac{\ell a}{n}, \frac{\ell(a+1)}{n} \right ) &= q\left(\frac{\ell a}{4n} , \frac{\ell(a+1)}{4n} \right)  + \left(\frac{-1}{\ell} \right)q \left(\frac{\ell(n-a-1)}{4n}, \frac{\ell(n-a)}{4n} \right) \\ &+ q\left(\frac{\ell(n+a)}{4n}, \frac{\ell(n+a+1)}{4n} \right) + \left (\frac{-1}{\ell} \right) q \left(\frac{\ell(2n-a-1)}{4n}, \frac{\ell(2n-a)}{4n} \right) \end{split} \label{inductive} \end{align}

    \par Now, substitute equation (\ref{legendre}) with $r = a+1$ into equation (\ref{inductive}) and multiply both sides by $\left ( \frac{-1}{\ell}\right)$ and we get \begin{align} \label{lemma-final}
        \left (\frac{-1}{\ell} \right)\left( q\left(\frac{4\ell a}{4n}, \frac{4\ell(a+1)}{4n} \right) - q\left(\frac{\ell a}{4n}, \frac{\ell(a+1)}{4n} \right) \right) = \sum_{i=1}^3 q\left(\frac{\ell(in-a-1)}{4n}, \frac{\ell(in-a)}{4n} \right)
    \end{align}
    
    \par Add equation (\ref{lemma-align}) with $r = a$ and equation (\ref{lemma-final}) together and the result holds for $a+1$.
\end{proof}

\par The proposition below describes the orbits of the Galois group by the action of multiplication by a prime $\ell$. This will be essential in utilizing the Shimura-Taniyama formula to compute the slopes of the Newton polygon.

\begin{proposition}
\label{orbits}
    Let $\ell$ be a prime and suppose $n \geq 1$ is coprime to $\ell$. Let $p$ be a prime such that $p$ has order $g := (\ell-1)/2$ in $(\mathbf{Z}/\ell\mathbf{Z})^\times$, gcd($n\ell$,$p) = 1$, and the order of $p$ in $(\mathbf{Z}/n\mathbf{Z})^\times$ is coprime to $g$. Define $A := \{j \in \mathbf{Z}/(n\ell)\mathbf{Z} \mid j \neq 0\}$, and consider the action of multiplying elements of $A$ by $\ell$. If $a,b \in A$ with $a \equiv b$ modulo $n$ and $\left (\frac{a}{\ell} \right) = \left (\frac{b}{\ell} \right)$, then $a$ and $b$ are in the same orbit under this action. Furthermore, if $a,b \in A$ are in the same orbit, then $\left (\frac{a}{\ell} \right) = \left (\frac{b}{\ell} \right)$.
\end{proposition}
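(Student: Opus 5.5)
Throughout I read ``the action of multiplying elements of $A$ by $\ell$'' as the action of multiplication by $p$, as in the definition of $\mathcal{O}_G$. Since $\gcd(p,n\ell)=1$, multiplication by $p$ is a bijection of $\mathbf{Z}/(n\ell)\mathbf{Z}$ fixing $0$, so it does act on $A$. The plan is to work componentwise through the Chinese Remainder Theorem $\mathbf{Z}/(n\ell)\mathbf{Z}\cong \mathbf{Z}/n\mathbf{Z}\times\mathbf{Z}/\ell\mathbf{Z}$, which applies because $\gcd(n,\ell)=1$. Under this isomorphism, multiplication by $p^k$ acts diagonally.

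\emph{The converse statement.} Since $p$ has order $g=(\ell-1)/2$ in the cyclic group $(\mathbf{Z}/\ell\mathbf{Z})^\times$, it lies in the unique subgroup of index $2$, namely the quadratic residues. Hence $\left(\frac{p}{\ell}\right)=1$. Multiplicativity of the Legendre symbol then gives $\left(\frac{p^k a}{\ell}\right)=\left(\frac{a}{\ell}\right)$ for every $k$, including when $\ell\mid a$, where both sides are $0$. So the Legendre symbol is constant on orbits.

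\emph{The main statement.} Let $a,b\in A$ with $a\equiv b \pmod n$ and $\left(\frac{a}{\ell}\right)=\left(\frac{b}{\ell}\right)$.
\begin{itemize}
\item If this common symbol is $0$, then $a\equiv b\equiv 0 \pmod \ell$. Together with $a\equiv b\pmod n$, this gives $a=b$, and there is nothing to prove.
\item Otherwise $b$ is invertible mod $\ell$ and $ab^{-1}$ is a nonzero quadratic residue mod $\ell$. Because $p$ generates the subgroup of quadratic residues, which has order $g$, there is some $j\in\{0,\dots,g-1\}$ with $p^j b\equiv a \pmod \ell$.
\end{itemize}
Let $h$ be the order of $p$ in $(\mathbf{Z}/n\mathbf{Z})^\times$. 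By hypothesis $\gcd(h,g)=1$, so the Chinese Remainder Theorem gives an integer $k\ge 0$ with $k\equiv 0\pmod h$ and $k\equiv j\pmod g$. For this $k$:
\begin{itemize}
\item mod $n$, we have $p^k b\equiv b\equiv a$;
\item mod $\ell$, we have $p^k b\equiv p^j b\equiv a$, since $p^g\equiv 1$.
\end{itemize}
Hence $p^k b=a$ in $\mathbf{Z}/(n\ell)\mathbf{Z}$, so $a$ and $b$ lie in the same orbit.

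The only step needing care is choosing a single exponent $k$ that works in both components at once. This is exactly where the hypothesis that $h$ is coprime to $g$ is used. The rest is the elementary fact that an element of order $(\ell-1)/2$ generates the quadratic residues mod $\ell$.
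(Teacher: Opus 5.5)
Your proof is correct and follows the same route as the paper: invariance of the Legendre symbol because $p$ is a quadratic residue, the trivial case $\ell \mid a$, and then the coprimality of the order $h$ of $p$ modulo $n$ with $g$ to decouple the two Chinese Remainder components. You are also right that the action is multiplication by $p$, which is what the paper's proof uses. The only difference is the last step: the paper counts, observing that the orbit of $a$ contains $g$ elements congruent to $a$ modulo $n$ with the same Legendre symbol, which is the full size of that fiber, whereas you construct the exponent directly via $k\equiv 0 \pmod h$ and $k\equiv j \pmod g$. Your version is slightly more direct and avoids the distinctness check that the paper's count leaves implicit.
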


\begin{proof}
\par Since $p$ has order $g = \frac{\ell-1}{2}$ in $(\mathbf{Z}/\ell\mathbf{Z})^\times$, it is a quadratic residue modulo $\ell$. By properties of the Legendre symbol, we have that if $j \in A$, then $\left (\frac{jp}{\ell} \right)  = \left (\frac{j}{\ell} \right)$. Therefore, if $a,b \in A$ are in the same orbit, then $\left(\frac{a}{\ell} \right) = \left(\frac{b}{\ell} \right)$.

\par Now, assume $a,b \in A$ such that $a \equiv b$ modulo $n$ and $\left(\frac{a}{\ell} \right) = \left (\frac{b}{\ell} \right)$. If $\left ( \frac{a}{\ell} \right) = 0$, then $a$ and $b$ are both multiples of $\ell$, so $a-b \equiv 0 $ modulo $n$ and $a-b \equiv 0$ modulo $\ell$, so $a = b$ by the Chinese Remainder Theorem.

\par We note that by the Chinese Remainder Theorem, if $c$ is the order of $p$ in $\left(\mathbf{Z}/n\mathbf{Z} \right)^\times$, then since $c$ is coprime to $g$, the order of $p$ in $\left(\mathbf{Z}/(n\ell)\mathbf{Z} \right)^\times$ is $gc$. As $a p^k \equiv a$ modulo $n$ if and only if $k$ is a multiple of $c$, and since $\left (\frac{p}{\ell} \right) = 1$, $\left (\frac{ap^k}{\ell} \right) = \left(\frac{a}{\ell}\right )$, there are $g$ elements in the orbit containing $a$ that are congruent to $a$ modulo $n$. As there are exactly $g$ elements in $\{b \in A \mid a \equiv b \textrm{ modulo } n \textrm{ and } \left (\frac{a}{\ell} \right) = \left(\frac{b}{\ell} \right)\}$, if $a \equiv b$ modulo $n$ and they have the same evaluation of the Legendre symbol modulo $\ell$, they are in the same orbit.
\end{proof}

\begin{remark}
    If $a \in A$, then $\mathcal{O}_a$, the orbit containing $a$, consists of every element in $A$ that is in the same congruence class as $a$ modulo $n$ and has Legendre symbol equal to $\left( \frac{a}{\ell} \right)$. Observe that if $\left( \frac{a}{\ell} \right) \neq 0$, then $\#\mathcal{O}_a$ is a multiple of $g$, which will be important when we apply the Shimura-Taniyama formula.
\end{remark}

\begin{construction}
    \label{construction}
    Let $\ell > 3$ be a prime and $n \geq 1$ such that gcd$(n,\ell) = 1$. Pick $0 \leq k \leq n-1$ such that $k \equiv -3\ell^{-1}$ modulo $n$.  Consider the cover $\phi: X \to \mathbf{P}^1$ branched at three points with ramification type $[n\ell,n\ell,\ell,1]$ and inertia type $(1,(n-k)\ell-4,k\ell+3)$. The equation for $X$ is $y^{n\ell} = x(1-x)^{(n-k)\ell-4}$.
\end{construction}

\begin{definition}If $\ell$ is a prime and $n \geq 1$, we define a function $I: \{0,1,2,\ldots,6n-1\} \to \mathcal{P}(\mathbf{\mathbf{Z}})$ by 
\[I(i) = \begin{cases}
    \left[\left \lfloor \frac{\ell\left(\frac{i}{2} \right )}{3} \right \rfloor + 1, \left \lfloor \frac{\ell \left( \frac{i}{2} +  \left \lceil \frac{i+1}{6}\right \rceil \right )}{4} \right \rfloor \right] & i \textrm{ even} \\\\

    \left[ \left \lfloor \frac{\ell \left ( \frac{i-1}{2} + \left \lceil \frac{i}{6}\right \rceil \right)}{4} \right \rfloor + 1,  \left \lfloor \frac{\ell\left( \frac{i+1}{2} \right)}{3}\right \rfloor \right] & i \textrm{ odd}
\end{cases}\]

\end{definition}

Note that $\{I(i) \mid 0 \leq i \leq 6n-1\}$ is a partition of the set of integers in the interval $[1,\ell n-1]$.

\begin{lemma}
\label{sigtype}
    Let $\phi$ be the cover in Construction \ref{construction}, and let $f$ be the signature type of $\phi$. If $j \in \{1,\ldots,\ell n-1\}$ and $j \not \equiv 0$ modulo $\ell$, pick $i$ such that $0 \leq i \leq 6n-1$ and $j \in I(i)$. Then,
    \begin{align*}
        f(j) = \left \lceil \frac{jk+\left \lfloor \frac{i+2}{2}\right \rfloor }{n}\right \rceil - \left \lfloor \frac{jk + \left \lfloor \frac{i+1}{2}\right \rfloor + \left \lfloor \frac{i}{6} \right \rfloor }{n} \right \rfloor
    \end{align*}
\end{lemma}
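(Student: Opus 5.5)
The plan is to compute $f(j)$ directly from the Chevalley--Weil formula $f(j) = -1 + \sum_{b \in B}\langle -ja_b/m\rangle$ with $m = n\ell$ and inertia type $(a_0,a_1,a_\infty) = (1,(n-k)\ell-4,k\ell+3)$ from Construction \ref{construction}. I will simplify this to a closed form involving $\lceil 3j/\ell\rceil$ and $\lfloor 4j/\ell\rfloor$. Then I will check that the partition $\{I(i)\}$ is exactly the partition of $[1,\ell n-1]$ cut at the multiples of $\ell/3$ and $\ell/4$, which identifies those two quantities in terms of $i$.

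\textbf{Step 1: reduce the three fractional parts.} Since $j \not\equiv 0$ modulo $\ell$ and $1 \le j < m$, we have $\langle -j/m\rangle = 1 - j/m$. Using $jn\ell/m = j \in \mathbf{Z}$, we get
\[
\left\langle \frac{-j((n-k)\ell-4)}{m}\right\rangle = \left\langle \frac{jk}{n} + \frac{4j}{m}\right\rangle, \qquad \left\langle \frac{-j(k\ell+3)}{m}\right\rangle = \left\langle -\frac{jk}{n} - \frac{3j}{m}\right\rangle .
\]
Write $x = (jk + 4j/\ell)/n$ and $y = (jk + 3j/\ell)/n$. Then $\langle x\rangle = x - \lfloor x\rfloor$ and $\langle -y\rangle = \lceil y\rceil - y$. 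The second identity is valid when $y \notin \mathbf{Z}$, and if $y$ happens to be an integer both sides are $0$ anyway, which is consistent with the formula below. Summing, the terms $-j/m$, $+4j/m$, $-3j/m$ cancel, and the $jk/n$ terms cancel. This leaves
\[
f(j) = \left\lceil \frac{jk + 3j/\ell}{n}\right\rceil - \left\lfloor \frac{jk + 4j/\ell}{n}\right\rfloor .
\]
Since $jk$ is an integer and $n$ is a positive integer, $\lfloor (N+t)/n\rfloor = \lfloor (N+\lfloor t\rfloor)/n\rfloor$ and $\lceil (N+t)/n\rceil = \lceil (N+\lceil t\rceil)/n\rceil$ for $N \in \mathbf{Z}$. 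Moreover $3j/\ell \notin \mathbf{Z}$, because $\ell>3$ is prime and $\ell\nmid j$, so $\lceil 3j/\ell\rceil = \lfloor 3j/\ell\rfloor + 1$. It therefore suffices to show, for $j \in I(i)$,
\[
\left\lfloor \frac{3j}{\ell}\right\rfloor = \left\lfloor \frac{i}{2}\right\rfloor, \qquad \left\lfloor \frac{4j}{\ell}\right\rfloor = \left\lfloor \frac{i+1}{2}\right\rfloor + \left\lfloor \frac{i}{6}\right\rfloor .
\]
Note that $\lfloor i/2\rfloor + 1 = \lfloor (i+2)/2\rfloor$.

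\textbf{Step 2: interpret the intervals $I(i)$.} In each block $[b\ell,(b+1)\ell)$, the cut points $\ell/4 < \ell/3 < \ell/2 < 2\ell/3 < 3\ell/4$ (shifted by $b\ell$) give six subintervals, alternately beginning at a multiple of $\ell/3$ and at a multiple of $\ell/4$. This gives $6n$ intervals in total, indexed so that $b = \lfloor i/6\rfloor$.

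For $i$ even, $I(i)$ runs from just above $\ell(i/2)/3$ to $\ell s/4$ with $s = i/2 + \lceil (i+1)/6\rceil$. Hence $\lfloor 3j/\ell\rfloor = i/2$. Also $\lfloor 4j/\ell\rfloor = s-1$, because $\ell \nmid 4j$ means $4j/\ell$ is never exactly $s$. Finally $s - 1 = i/2 + \lfloor i/6\rfloor$, since $\lceil (i+1)/6\rceil - 1 = \lfloor i/6\rfloor$ for even $i$.

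For $i$ odd, $I(i)$ runs from just above $\ell s'/4$, with $s' = (i-1)/2 + \lceil i/6\rceil$, up to $\ell (i+1)/6$. This gives $\lfloor 4j/\ell\rfloor = s'$ and $\lfloor 3j/\ell\rfloor = (i+1)/2 - 1 = \lfloor i/2\rfloor$. It remains to check $s' = (i+1)/2 + \lfloor i/6\rfloor$, which amounts to $\lceil i/6\rceil = 1 + \lfloor i/6\rfloor$. This holds because $i$ odd is not divisible by $6$.

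\textbf{Main obstacle.} The only delicate point is this bookkeeping: confirming that the ordering of the cut points in each block really matches the parity pattern built into $I(i)$, and getting the ceiling/floor identities of Step 2 right for each residue of $i$ modulo $6$. I would verify it case by case for $i \bmod 6 \in \{0,\dots,5\}$. The boundary issues are harmless because $\ell \nmid 3j$ and $\ell \nmid 4j$, so no $j$ ever lies exactly on a cut point.
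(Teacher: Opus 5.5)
Your proposal is correct and follows essentially the same route as the paper: both reduce the Chevalley--Weil formula to $\lceil j(k\ell+3)/(n\ell)\rceil - \lfloor j(k\ell+4)/(n\ell)\rfloor$, then pin down $\lceil 3j/\ell\rceil = \lfloor (i+2)/2\rfloor$ and $\lfloor 4j/\ell\rfloor = \lfloor (i+1)/2\rfloor + \lfloor i/6\rfloor$ on each $I(i)$. Your observation that the $I(i)$ are exactly the gaps between consecutive multiples of $\ell/3$ and $\ell/4$ is a cleaner packaging of the paper's explicit inequality chains, and you also make explicit the identity $\lceil (N+t)/n\rceil = \lceil (N+\lceil t\rceil)/n\rceil$, which the paper leaves implicit.
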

\begin{proof}

Section 2.2 tells us that 

\begin{align}
f(j) = -1 + \left \langle \frac{j}{n\ell}\right \rangle &+ \left \langle \frac{j(k\ell+3)}{n\ell}\right \rangle + \left \langle \frac{j((n-k)\ell-4)}{n\ell}\right \rangle \\ &= \left \lceil \frac{j(k\ell+3)}{n\ell}\right \rceil - \left \lfloor \frac{j(k\ell+4)}{n\ell}\right \rfloor \label{floor}
\end{align}

with equation (\ref{floor}) following from the definition of the fractional part of a real number.
\\\\
\par Given $j \in \{1,2,\ldots,\ell n-1\}$ with $j \not \equiv 0 $ modulo $\ell$ and the $i$ that determines the interval $j$ lies in, we will prove the following: 
\begin{align}
\left \lceil \frac{j(k\ell+3)}{n\ell}\right \rceil &= \left \lceil \frac{jk+ \lfloor \frac{i+2}{2} \rfloor}{n}\right \rceil  \label{first-floor} \\ \left \lfloor \frac{j(k\ell+4)}{n\ell}\right \rfloor &= \left \lfloor \frac{jk + \left \lfloor \frac{i+1}{2}\right \rfloor + \left \lfloor \frac{i}{6} \right \rfloor }{n} \right \rfloor  \label{second-floor}
\end{align}

\par To prove equation (\ref{first-floor}), we note that if $i$ is even, then 

\begin{align*}
    \frac{3j}{n\ell} \leq \frac{3\ell\left (\frac{i}{2} + \left \lceil \frac{i+1}{6} \right \rceil\right)}{4n\ell} \leq \frac{3\ell\left(\frac{i}{2} + \frac{i+5}{6} \right)}{4n\ell} = \frac{\frac{i+2}{2}}{n} - \frac{\ell}{8n\ell} < \frac{\left \lfloor \frac{i+2}{2}\right \rfloor}{n}
\end{align*}

\par and

\begin{align*}
    \frac{3j}{n\ell} \geq \frac{\ell \left ( \frac{i}{2} \right) - 2}{n\ell} + \frac{3}{n\ell} = \frac{\frac{i+2}{2} }{n} + \frac{1-\ell}{n\ell} > \frac{\left \lfloor \frac{i+2}{2} \right \rfloor}{n} - \frac{1}{n}.
\end{align*}

\par If $i$ is odd, then

\begin{align*}
    \frac{3j}{n\ell} \leq \frac{\ell \left( \frac{i+1}{2} \right)}{n\ell} = \frac{ \frac{i+1}{2}}{n}  = \frac{\left \lfloor \frac{i+2}{2} \right \rfloor}{n}
\end{align*}

\par and

\begin{align*}
    \frac{3j}{n\ell} \geq \frac{3\ell \left(\frac{i-1}{2} + \frac{i}{6} \right) - 9}{4n\ell} + \frac{3}{n\ell} = \frac{ \frac{i+1}{2}}{n} + \frac{\frac{-21\ell}{6}+3}{4n\ell} > \frac{\left \lfloor \frac{i+2}{2} \right \rfloor}{n} -\frac{1}{n}
\end{align*}

\par For equation (\ref{second-floor}), if $i$ is even, note that $\left \lfloor \frac{i}{6} \right \rfloor + 1 = \left \lceil \frac{i+1}{6} \right \rceil$ and $\frac{i}{2} + \left \lceil \frac{i+1}{6} \right \rceil$ is never $0$ modulo $4$. Then,

\begin{align*}
    \frac{4j}{n\ell} \leq \frac{\ell\left( \frac{i}{2} + \left \lceil \frac{i+1}{6} \right \rceil \right)-1}{n\ell} = \frac{\left \lfloor \frac{i+1}{2}\right \rfloor + \left \lfloor \frac{i}{6} \right \rfloor}{n}  +\frac{\ell-1}{n\ell} < \frac{\left \lfloor \frac{i+1}{2}\right \rfloor + \left \lfloor \frac{i}{6} \right \rfloor}{n} + \frac{1}{n}
\end{align*}

\par and

\begin{align*}
    \frac{4j}{n\ell} \geq \frac{4\ell\left(\frac{i}{2} \right)-8}{3n\ell}+\frac{4}{n\ell} = \frac{\frac{i}{2} + \frac{i}{6}}{n}+\frac{4}{3n\ell} > \frac{\left \lfloor \frac{i+1}{2}\right \rfloor + \left \lfloor \frac{i}{6} \right \rfloor}{n}
\end{align*}

\par Observe that $j$ is a multiple of $\ell$ if and only if $j$ is the rightmost endpoint of $I(i)$ for some $i \equiv 5$ modulo $6$. Since we are excluding the case where $j \equiv 0$ modulo $\ell$, this gives us that if $i$ is odd, $\frac{i+1}{2}$ is never a multiple of $3$. Also, when $i$ is odd and not $5$ modulo $6$, we have that $\frac{i+1}{6} \leq \left \lfloor \frac{i}{6} \right \rfloor + 1$. We now show that if $i$ is odd,

\begin{align*}
    \frac{4j}{n\ell} \leq \frac{4\ell\left( \frac{i+1}{2} \right)-1}{3n\ell} = \frac{\frac{i+1}{2} + \frac{i+1}{6}}{n}-\frac{1}{3n\ell} \leq \frac{\left \lfloor \frac{i+1}{2}\right \rfloor + \left \lfloor \frac{i}{6} \right \rfloor+1}{n} - \frac{1}{3n\ell} < \frac{\left \lfloor \frac{i+1}{2}\right \rfloor + \left \lfloor \frac{i}{6} \right \rfloor}{n} + \frac{1}{n}
\end{align*}

\par and as $i \not \equiv 0 $ modulo $6$ in this case, $\left \lceil \frac{i}{6} \right \rceil -1 = \left \lfloor \frac{i}{6} \right \rfloor$, so

\begin{align*}
    \frac{4j}{n\ell} \geq \frac{\ell \left(\frac{i-1}{2} + \left \lceil \frac{i}{6} \right \rceil \right) - 3}{n\ell} + \frac{4}{n\ell} = \frac{\left \lfloor \frac{i+1}{2}\right \rfloor + \left \lfloor \frac{i}{6} \right \rfloor}{n} + \frac{\ell+1}{n\ell} > \frac{\left \lfloor \frac{i+1}{2}\right \rfloor + \left \lfloor \frac{i}{6} \right \rfloor}{n}.
\end{align*}
\end{proof}

\begin{lemma}
    \label{intervals}
    Let $\phi$ be the cover in Construction \ref{sigtype}, and let $f$ be the signature type of $\phi$. Let $j \in \{1,2,\ldots,\ell n-1\}$ such that $j \equiv a\ell$ modulo $n$ for some $0 \leq a \leq n-1$ and $j \not \equiv 0$ modulo $\ell$. We know that $f(j) = 1$ if and only if
    \begin{itemize}

    \item $0 \leq 3a \leq n-1$ and
    
    \[
        j \in \left(0, \frac{3a\ell}{4}\right) \cup \left(a\ell, \frac{\ell(n+3a)}{4} \right) \cup \left(\frac{\ell(n+3a)}{3}, \frac{\ell(2n+3a)}{4} \right) \cup \left(\frac{\ell(2n+3a)}{3}, \frac{\ell(3n+3a)}{4} \right),
    \]
    
    \item or $n \leq 3a \leq 2n-1$ and

    \[
        j \in \left(0, \frac{\ell(3a-n)}{4}\right) \cup \left(\frac{\ell(3a-n)}{3}, \frac{3\ell a}{4} \right) \cup  \left(a\ell, \frac{\ell(n+3a)}{4} \right) \cup  \left(\frac{\ell(n+3a)}{3}, \frac{\ell(2n+3a)}{4} \right),
    \]
    
    \item or $2n \leq 3a \leq 3n-1$ and
   
    \[
       j \in \left(0, \frac{\ell(3a-2n)}{4}\right) \cup \left(\frac{\ell(3a-2n)}{3}, \frac{\ell(3a-n)}{4} \right) \cup  \left(\frac{\ell(3a-n)}{3}, \frac{3\ell a}{4} \right) \cup  \left(a\ell, \frac{\ell(n+3a)}{4} \right).
    \]
     \end{itemize}
\end{lemma}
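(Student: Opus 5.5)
I plan to bypass the interval bookkeeping of Lemma \ref{sigtype} and work directly from equation (\ref{floor}) in its proof:
\[
f(j)=\left\lceil x\right\rceil-\left\lfloor x+\tfrac{j}{n\ell}\right\rfloor, \qquad x:=\frac{j(k\ell+3)}{n\ell}.
\]
Since $0<j/(n\ell)<1$ and $x$ is not an integer, $f(j)\in\{0,1\}$. Moreover, $f(j)=1$ exactly when the closed interval $[x,\,x+j/(n\ell)]$ contains no integer. Multiplying by $n$, this says that the interval $[\,jk+3j/\ell,\; jk+4j/\ell\,]$ contains no multiple of $n$.

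The next step removes $k$. From $j\equiv a\ell \pmod n$ and $k\ell\equiv -3\pmod n$ we get $jk\equiv -3a\pmod n$. Shifting by a multiple of $n$ does not change the condition, so I may replace $jk$ by the representative $r\in\{0,\dots,n-1\}$ of $-3a$ modulo $n$. Put $t:=j/\ell\in(0,n)$. The condition becomes:
\[
\text{no } c\in\mathbf{Z} \text{ satisfies } 3t+r\le cn\le 4t+r.
\]
The three bullets correspond to the three possible values of $r$:
\begin{itemize}
\item $r=n-3a$ when $1\le 3a\le n-1$ (and $r=0$ when $a=0$);
\item $r=2n-3a$ when $n\le 3a\le 2n-1$;
\item $r=3n-3a$ when $2n\le 3a\le 3n-1$.
\end{itemize}
The case $a=0$ fits the formulas of the first bullet if we interpret $(0,0)$ as empty.

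Now I solve the condition. As $t$ increases, the interval $[3t+r,4t+r]$ slides to the right and lengthens. A multiple $cn$ lies strictly between two consecutive "bad" ranges precisely when $4t+r<cn$ and $3t+r>(c-1)n$. So the good set is
\[
\bigcup_{c\ge 1}\left(\frac{(c-1)n-r}{3},\;\frac{cn-r}{4}\right)\cap(0,n),
\]
with the first piece starting at $0$ (because $r<n$). Substituting each value of $r$ and multiplying by $\ell$ should give exactly the four intervals listed in each bullet. For example, with $r=n-3a$ the pieces are $(0,\tfrac{3a}{4}),\ (a,\tfrac{n+3a}{4}),\ (\tfrac{n+3a}{3},\tfrac{2n+3a}{4}),\ (\tfrac{2n+3a}{3},\tfrac{3n+3a}{4})$, and the next piece begins at $\tfrac{3n+3a}{3}\ge n$, so it is discarded.

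Two points need care at the boundaries.
\begin{itemize}
\item Strict versus non-strict inequalities: equality $3t+r=cn$ or $4t+r=cn$ would force $3j/\ell$ or $4j/\ell$ to be an integer. Since $\ell>3$ is prime and $\ell\nmid j$, this is impossible, so open intervals are correct.
\item Truncation at $t<n$: in each case I must check that exactly the displayed pieces are nonempty and lie below $n$, using the case bounds on $3a$ (e.g. in the second case $(2n+3a)/3\ge n$).
\end{itemize}
The main obstacle is only this careful case-by-case bookkeeping of which multiples $c$ contribute a piece in each range of $a$. I would handle it by listing $c=1,2,3,4$ explicitly for each $r$.
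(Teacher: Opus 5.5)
Your argument is correct, and it takes a more direct route than the paper. The paper does not solve the condition coming from (\ref{floor}) in a real variable. It goes through Lemma \ref{sigtype}, whose blocks $I(i)$ in effect record $\lceil 3j/\ell\rceil=\lfloor (i+2)/2\rfloor$ and $\lfloor 4j/\ell\rfloor=\lfloor (i+1)/2\rfloor+\lfloor i/6\rfloor$ as functions of a block index $i$. It then rewrites $f(j)=1$ as ``no integer $m$ in a range determined by $i$ is congruent to $3a$ modulo $n$'', locates the extreme indices $i$ at which this switches, and converts those indices back into the real endpoints $\ell m/3$, $\ell m/4$. It writes this out only for $0\le 3a\le n-1$ and handles the other two ranges by ``continuing this process.''

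You share the two essential reductions: a window that avoids all multiples of $n$, and $jk\equiv -3a \pmod n$. But by keeping $t=j/\ell$ real you solve $(c-1)n<3t+r$ and $4t+r<cn$ in one step. As a result, all three bullets come out uniformly from the three values of $r$. Openness of the intervals follows from the single fact that $3j/\ell,4j/\ell\notin\mathbf{Z}$, and Lemma \ref{sigtype} is not needed at all. The paper's route only buys reuse of a discretization it had already set up; yours is shorter and covers every case explicitly.

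One small slip: when $3a=n$ or $3a=2n$ your formulas give $r=n$, which is not in $\{0,\dots,n-1\}$. So the remark that the first piece starts at $0$ ``because $r<n$'' does not apply there. This is harmless, since the condition is invariant under $r\mapsto r+n$. For $r=n$ the $c=1$ piece $(-n/3,0)$ misses $(0,n)$, which matches the empty first interval $(0,0)$ in the second and third bullets. You should still state this explicitly.
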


\begin{proof}[]
    If $j \equiv 0$ modulo $\ell$, then by equation (\ref{floor}), $f(j) = 0$. The signature type $f(j) = 0$ if and only if there is a multiple of $n$ between $jk + \left \lfloor \frac{i+2}{2} \right \rfloor + 1$ and $jk + \left \lfloor \frac{i+1}{2} \right \rfloor + \left \lfloor \frac{i}{6} \right \rfloor$. This means that if $j \equiv a\ell$ modulo $n$ for $0 \leq a \leq n-1$, then $f(j) = 1$ if and only if for all $m$ such that $\left \lfloor \frac{i+2}{2} \right \rfloor + 1 \leq m \leq \left \lfloor \frac{i+1}{2} \right \rfloor + \left \lfloor \frac{i}{6} \right \rfloor$, $m \not \equiv 3a$ modulo $n$. Note that $jk \equiv -3a$ modulo $n$, so if $m \equiv 3a$ modulo $n$, then $jk + m \equiv 0$ modulo $n$. We see that if $i = 0$, $\left \lfloor \frac{i+2}{2} \right \rfloor > \left \lfloor \frac{i+1}{2} \right \rfloor + \left \lfloor \frac{i}{6} \right \rfloor$. Also, since $0 \leq i \leq 6n-1$, $ \left \lfloor \frac{i+1}{2} \right \rfloor + \left \lfloor \frac{i}{6} \right \rfloor - \left \lfloor \frac{i+2}{2} \right \rfloor \leq n-1$.\\
    
    \par We will prove the first case, where $0 \leq 3a \leq n-1$. 
    
    \par We want to find the largest $i$ such that $\left \lfloor \frac{i+1}{2} \right \rfloor + \left \lfloor \frac{i}{6} \right \rfloor = 3a-1$, which occurs when $i$ is even. Note that when $i$ is even, $\left \lfloor \frac{i+1}{2} \right \rfloor + \left \lfloor \frac{i}{6} \right \rfloor = \frac{i}{2} + \left \lceil \frac{i+1}{6} \right \rceil-1$, so we have $i$ even and $\frac{i}{2} + \left \lceil \frac{i+1}{6} \right \rceil = 3a$, which appears in the rightmost bound of $I(i)$ when $i$ is even. Therefore, if $j \equiv a\ell$ modulo $n$ and 
    \[0 < j \leq \left \lfloor \frac{3a\ell}{4}\right \rfloor,\]
    then $f(j) = 1$. Now we solve for the smallest $i$ such that $\left \lfloor \frac{i+2}{2}\right \rfloor = 3a+1$, which is $i = 6a$. Then, find the largest $i$ such that $\left \lfloor \frac{i+1}{2} \right \rfloor + \left \lfloor \frac{i}{6} \right \rfloor = n+3a-1$. By continuing this process, we find that if $j \equiv a\ell$ modulo $n$, $f(j)=1$ if and only if 

    \[j \in \left(0, \frac{3a\ell}{4}\right) \cup \left(a\ell, \frac{\ell(n+3a)}{4} \right) \cup \left(\frac{\ell(n+3a)}{3}, \frac{\ell(2n+3a)}{4} \right) \cup \left(\frac{\ell(2n+3a)}{3}, \frac{\ell(3n+3a)}{4} \right).\]

\end{proof}

Now we can prove Theorem \ref{slopes}. By using Proposition \ref{orbits}, the lemmas in Section \ref{qe}, and the Shimura-Taniyama formula, we can determine the slopes of the Newton polygon of the cover described in Construction \ref{construction}.

\begin{proof}[Proof of Theorem \ref{slopes}]
Let $f$ be the signature type of the cover in Construction \ref{construction}. Let $j \in \{1,2,\ldots,\ell n-1\}$ such that $j \equiv a\ell$ modulo $n$ for some $0 \leq a \leq n-1$ and $j \not \equiv 0$ modulo $\ell$.

First, assume $0 \leq 3a \leq n-1$, then Lemma \ref{intervals} says that $f(j) =1$ if and only if 

\begin{align}
0 < j& < \frac{3a\ell}{4} \label{first_i} \\
a\ell < j& < \frac{\ell(n+3a)}{4} \label{second_i} \\
 \frac{(n+3a)\ell}{3}  < j& < \frac{(2n+3a)\ell}{4} \label{third_i} \\
 \frac{(2n+3a)\ell}{3}  <  j& <  \frac{(3n+3a)\ell}{4} \label{fourth_i}
\end{align}

We wish to compute the number of quadratic residues and nonresidues inside of 
\begin{align} \left(0, \frac{3a\ell}{4}\right) \cup \left(a\ell, \frac{\ell(n+3a)}{4} \right) \cup \left(\frac{\ell(n+3a)}{3}, \frac{\ell(2n+3a)}{4} \right) \cup \left(\frac{\ell(2n+3a)}{3}, \frac{\ell(3n+3a)}{4} \right)\label{first-a-interval} \end{align}

and show that it has the same number of quadratic residues and nonresidues as 
\begin{align}\left(0, \frac{\ell}{4} \right) \cup \left(\frac{\ell}{3}, \frac{\ell}{2} \right) \cup \left(\frac{2\ell}{3}, \frac{3\ell}{4} \right). \label{needed_interval} \end{align}

We note that intervals $(\ref{first-a-interval})$ and $(\ref{needed_interval})$ contain $g$ integers, which is the sum of the number of non-zero quadratic residues and nonresidues in these intervals. The quadratic excess of any of these intervals is the number of quadratic residues minus the number of quadratic nonresidues in the interval. Therefore, if the quadratic excess of these intervals is equal, then they have the same number of quadratic residues and the same number of quadratic nonresidues. The number of non-zero quadratic residues in this interval gives one slope of the Newton polygon, and the number of quadratic nonresidues gives the other slope.

As $j \equiv a\ell$ modulo $n$, we can add $(n-1)a\ell$ to interval (\ref{first_i}) to get elements congruent to $0$ modulo $n$. As we are adding a multiple of $\ell$, this doesn't change the number of quadratic residues or nonresidues in the interval. For intervals (\ref{second_i}), (\ref{third_i}), and (\ref{fourth_i}), subtract $a\ell$. Then, divide each inequality by $n$. Finally, we shift the resulting interval from interval 
(\ref{first_i}) by subtracting $(a-1)\ell$.  We now need to compute

\begin{align}
    q\left(\frac{3\ell(n-a)}{3n}, \frac{\ell(4n-a)}{4n}\right) + q\left(0, \frac{\ell(n-a)}{4n} \right) + q\left(\frac{n\ell}{3n}, \frac{\ell(2n-a)}{4n} \right) + q\left(\frac{2n\ell}{3n}, \frac{\ell(3n-a)}{4n} \right). \label{first-qe}
\end{align}

We see that

\begin{align}
    q\left(\frac{3\ell(n-a)}{3n}, \frac{\ell(4n-a)}{4n}\right) = \left (\frac{-1}{\ell} \right) q \left(\frac{a\ell}{4n}, \frac{3a\ell}{3n} \right) = \left (\frac{-1}{\ell} \right) q \left(\frac{a\ell}{4n}, \frac{4a\ell}{4n} \right) \label{interval-manip}
\end{align}

Finally, by substituting equation (\ref{interval-manip}) into Lemma \ref{lemma} with $r = a$, we get that equation (\ref{first-qe}) equals

\begin{align*}
    q\left(0, \frac{\ell}{4} \right) + q\left(\frac{\ell}{3}, \frac{\ell}{2} \right) + q\left(\frac{2\ell}{3}, \frac{3\ell}{4} \right).
\end{align*}

This concludes the case when $0 \leq 3a \leq n-1$.

Assume $n \leq 3a \leq 2n-1$, then Lemma \ref{intervals} states that if $j \equiv a\ell$ modulo $n$ and $j \not \equiv 0 $ modulo $\ell$, then $f(j) = 1$ if and only if $j$ lies in one of the following intervals:

\begin{align}
    0 < j& < \frac{\ell(3a-n)}{4} \label{2a-1}\\
    \frac{\ell(3a-n)}{3} < j& < \frac{3\ell a}{4} \label{2a-2}\\
    a\ell < j& < \frac{\ell(n+3a)}{4} \label{2a-3}\\
    \frac{\ell(n+3a)}{3} < j& < \frac{\ell(2n+3a)}{4} \label{2a-4}
\end{align}

Note that intervals (\ref{2a-3}) and (\ref{2a-4}) are the same as intervals (\ref{second_i}) and (\ref{third_i}), respectively, so we only need to prove that the quadratic excess of the intervals (\ref{2a-1}) and (\ref{2a-2}) equals the quadratic excess of intervals (\ref{first_i}) and (\ref{fourth_i}). We add $a\ell(n-1)$ to both (\ref{2a-1}) and (\ref{2a-2}), divide by $n$, and then subtract $\ell(a-1)$. We now count the quadratic excess

\begin{align*}
   q&\left(\frac{3\ell(n-a)}{3n}, \frac{\ell(3n-a)}{4n}\right) + q\left(\frac{2n\ell}{3n}, \frac{\ell(4n-a)}{4n}\right)  \\ =  q&\left(\frac{3\ell(n-a)}{3n}, \frac{\ell(4n-a)}{4n} \right) - q\left(\frac{\ell(3n-a)}{4n} \frac{\ell(4n-a)}{4n} \right) \\ + \ q&\left(\frac{2n\ell}{3n}, \frac{\ell(3n-a)}{4n} \right) + q\left(\frac{\ell(3n-a)}{4n},\frac{\ell(4n-a)}{4n} \right)
\end{align*}

Therefore, we get the same quadratic excess as in equation (\ref{first-qe}).

For the final case, assume that $2n \leq 3a \leq 3n-1$. Lemma \ref{intervals} state that when $j \equiv a\ell$ modulo $n$, $j \not \equiv 0$ modulo $\ell$, and $f(j) = 1$:

\begin{align}
    0 < j& < \frac{\ell(3a-2n)}{4} \label{3a-1} \\
    \frac{\ell(3a-2n)}{3} < j& < \frac{\ell(3a-n)}{4} \label{3a-2} \\
    \frac{\ell(3a-n)}{3} < j & < \frac{3\ell a}{4} \label{3a-3} \\
    a\ell < j& < \frac{\ell(n+3a)}{4} \label{3a-4}
\end{align}

As intervals (\ref{3a-3}) and (\ref{3a-4}) match intervals (\ref{2a-2}) and (\ref{2a-3}), respectively, we only have to focus on intervals (\ref{3a-1}) and (\ref{3a-2}). We get the quadratic excess

\begin{align*}
    q\left(\frac{3\ell(n-a)}{3n}, \frac{\ell(2n-a)}{4n} \right) + q\left(\frac{n\ell}{3n}, \frac{\ell(3n-a)}{4n} \right)
\end{align*}

and this equals the quadratic excess coming from intervals (\ref{2a-1}) and (\ref{2a-4}).
\end{proof}

\par Corollary \ref{conjecture-corollary} follows immediately from the Theorem. We now prove the remaining corollaries: 

\begin{proof}[Proof of Corollary \ref{germain1}]
    Since $\ell$ is $1$ modulo $4$, $\left( \frac{-1}{\ell} \right) = 1$. This gives us that $q\left( \frac{2\ell}{3}, \frac{3\ell}{4} \right) = q\left(\frac{\ell}{4}, \frac{\ell}{3} \right)$. We then see that 
    
    \[q\left(0, \frac{\ell}{4} \right) + q\left(\frac{\ell}{3}, \frac{\ell}{2} \right) + q\left(\frac{2\ell}{3}, \frac{3\ell}{4} \right) = q\left(0,\frac{\ell}{2}\right) = 0.\]
    
    This means the number of quadratic residues equals the number of quadratic nonresidues in $(0,\ell/4) \cup (\ell/3,\ell/2) \cup (2\ell/3,3\ell/4)$, so the only slope of the Newton polygon is $1/2$.
\end{proof}

\begin{proof}[Proof of Corollary \ref{germain2}]
    Follow the proof of Proposition \ref{orbits} with $p$ having order $2g$ in the group of units modulo $\ell$. Note that this means that $p$ is a quadratic nonresidue modulo $\ell$ and $p \not \equiv -1$ modulo $\ell$. In proposition \ref{orbits}, each orbit relevant to the Shimura-Taniyama method is either composed entirely of quadratic residues or nonresidues, and they contain all quadratic residues or nonresidues in a congruence class modulo $n$. As $p$ is a nonresidue, if we multiply $p$ by a quadratic residue, we get a quadratic nonresidue. Similarly, if we multiply $p$ by a quadratic nonresidue, we get a quadratic residue. Therefore, each orbit is the union of the orbits corresponding to the congruence classes modulo $n$.
    
    For the orbits in Proposition \ref{orbits} that contain a congruence class modulo $n$, there are $mg$ elements for some multiple $m$. Also, there are $m\alpha$ elements with signature type $1$ in one of the orbits, and $m(g-\alpha)$ elements with signature type $1$ in the other orbit. Therefore, when $p$ has multiplicative order $2g$ modulo $\ell$, the orbit corresponding to a congruence class modulo $n$ has $2gm$ elements in it and $mg$ elements with signature type one, so by the Shimura-Taniyama formula we get the slope $1/2$.
\end{proof}

\section{Large Denominators and Density}

\subsection{Large Denominators}
\par \hspace{\parindent} If $g$ is an odd Sophie germain prime and $\ell = 2g+1$, then Theorem \ref{slopes} shows there are infinitely many covers defined over $\overline{\mathbf{F}_p}$ with a Newton polygon whose slopes all have denominator $g$. For $g \geq 11$, this gives us examples of curves whose Newton polygons have large denominators as described in \cite[Expectation 8.5.4]{oort_denominators}. Proposition \ref{denominators} states when these Newton polygons are unlikely.

\begin{proof}[Proof of Proposition \ref{denominators}]
    If $p$ is a prime that satisfies the conditions in Theorem \ref{slopes} and $\ell = 2g+1$, then there is a curve of genus $ng$ defined over $\overline{\mathbf{F}_p}$ such that if $\alpha$ is the number of quadratic residues and in the intervals $(0,\ell/4)$, $(\ell/3,\ell/2)$, and $(2\ell/3,3\ell/4)$, the Newton polygon of the curve has slopes $\alpha/g$ and $(g-\alpha)/g$. Let $a = \min{\{\alpha,g-\alpha\}}$ and $b = g-a$, so $a/g < 1/2$. We see that if we multiply every element in the interval $(2\ell/3,3\ell/4)$ by $-1$ modulo $\ell$, all of those elements are in the interval $(\ell/4,\ell/3)$. Therefore there are $g$ elements total contained in these intervals.
    
    Now consider the quadratic excess in each of the three intervals above. We know that $\ell$ is either $11$ or $23$ modulo $24$, and by the tables in \cite[Section 3]{johnson_symmetries_1977}, in each of those cases the quadratic excess in at least one of those intervals is $0$. By the Pólya-Vinogradov inequality \cite[Chapter 23]{multnumtheory}, we have that \[-2\sqrt{2g+1}\log{(2g+1)} < a-b < 2\sqrt{2g+1}\log{(2g+1)}.\] Since $a+b = g$, we find that \[\frac{g-2\sqrt{2g+1}\log{(2g+1)}}{2g} < \frac{a}{g} < \frac{1}{2}.\] Since $f(g) := (g - 2\sqrt{2g+1}\log{(2g+1)})/(2g)$ has positive second derivative for $g > 0$ and converges to $1/2$ as $g$ approaches infinity, by doing a quick computation we have that if $g > 341$, then $f(g) > 0$ and $f(g)$ increases as $g$ increases.
    
    Let $\xi_1$ be the symmetric Newton polygon of height $2g$ with two slopes, where the smaller slope is $\lfloor gf(g) \rfloor / g < a/g$. Let $\xi_2$ be the symmetric Newton polygon of height $2g$ with slopes $a/g$ and $b/g$. Then, dim$(\mathcal{A}_g[\xi_1]) > $ dim$(\mathcal{A}_g[\xi_2])$ and the codimension of $\mathcal{A}_g[\xi_2]$ is bigger than the codimension of $\mathcal{A}_g[\xi_2]$ in $\mathcal{A}_g$. We use Theorem \ref{npdim} to compute the dimension of $\mathcal{A}_g[\xi_1]$, and we show $\xi_1$ is unlikely and therefore $\xi_2$ is unlikely. By applying Pick's Theorem we can show that 
    
    \[\textrm{dim}(\mathcal{A}_g[\xi_1]) = \frac{g^2-g\lfloor gf(g) \rfloor -\lfloor gf(g) \rfloor+1}{2}\] and the codimension of $\mathcal{A}_g[\xi_1]$ in $\mathcal{A}_g$ is \[\frac{g+g\lfloor gf(g) \rfloor+\lfloor gf(g) \rfloor-1}{2}.\]
    
    The codimension is increasing as $g$ increases if $g > 341$, and a computation shows that the codimension exceeds $3g-3$ if $g > 363$, and therefore if $g > 363$ is a Sophie Germain prime, $\xi_1$ is unlikely and has large denominators.
    
\end{proof}

\begin{example}
    Let $g = 419$, which is a Sophie Germain prime. If $n$ is a power of $2$, then Proposition \ref{denominators} tells us that there exists a curve of genus $ng$ defined over $\overline{\mathbf{F}_3}$ whose Newton polygon has slopes $193/419$ and $226/419$ and is unlikely.
\end{example}

\begin{remark}
If Dickson's conjecture is true, then if we consider the density of primes such that there exists a curve of genus $g$ that has an unlikely Newton polygon with slopes that have large denominators, then Proposition \ref{denominators} shows that the limit superior as $g$ goes to infinity is $1$. Even without Dickson's conjecture, there are very large Sophie Germain primes known. For example, for $g = 183027 \times 2^{265440} - 1$ and any prime $p \not \equiv -1$ modulo $(2g+1)$, there exists a curve of arbitrarily large genus defined over $\overline{\mathbf{F}_p}$ with an unlikely Newton polygon which has slopes that all have denominator $g$.
\end{remark}

\subsection{Results on Density}
\par \hspace{\parindent} Let $g_1, g_2, \ldots, g_k$ be distinct odd Sophie Germain primes such that for $1 \leq i,j \leq k$, $g_i \neq 2g_j+1$ and $g_i \nmid (g_j-1)$. Let $a \geq 0$ and for $1 \leq i \leq k$ define $n_i := 2^a \prod_{b \neq i} g_b$. Then, $n_i$ is coprime to $2g_i+1$, $g_in_i = g_jn_j$, and for $i \neq j$, gcd$(g_i, \varphi(n_j)) = 1$ where $\varphi$ is Euler's totient function. Therefore, if $p$ is any prime with order $2g_i$ in $\mathbf{Z}/(2g_i+1)\mathbf{Z}$ for any $i$, then there is a supersingular curve of genus $n_ig_i$ defined over $\overline{\mathbf{F}_p}$ by Corollary \ref{germain2}. We can use this construction to give a lower bound for the limit superior of $\delta_{ss}(g)$.
\par We know that $p$ having order $2g_i$ is equivalent to $\ell$ being a quadratic nonresidue modulo $2g_i+1$ such that $p \not \equiv -1$ modulo $2g_i+1$. There are exactly $g_i -1$ congruence classes that satisfy this, so the prime density in the case of one Sophie Germain prime is $(g_i-1)/(2g_i)$. We use the Inclusion-Exclusion Principle to compute the density of such primes given $k$ Sophie Germain primes as above:
\begin{align}\sum_{i=1}^k \frac{g_i-1}{2g_i} - \sum_{1 \leq i < j \leq k}\frac{(g_i-1)(g_j-1)}{4g_ig_j} + \ldots + (-1)^{k+1} \frac{(g_1-1)(g_2-1)\ldots(g_k-1)}{2^kg_1g_2\ldots g_k} \label{ie} \end{align}

\begin{proof}[Proof of Proposition \ref{limsup}]
    Consider the following set of Sophie Germain primes: 
    
    \begin{align*} S = \{&3, 5, 29, 41, 53, 83, 89, 113, 131, 173, 179, 191, 233, 239, 251, 281, 293, 359, 419, 431, 443, 491,\\ &509, 593, 641, 653, 659, 683, 719, 743, 761, 809, 911, 953, 1013, 1019, 1031, 1049, 1103, 1223,\\ &1229, 1289, 1409, 1439, 1451, 1481, 1499, 1511, 1559\}. \end{align*}
    
    If $g_1,g_2$ are in this set then $g_1 \neq 2g_2+1$ , $g_2 \neq 2g_1+1$, $g_1 \nmid (g_2-1)$, and $g_2 \nmid (g_1-1)$. If $g = \prod_{s \in S} s$, then by (\ref{ie}) the set of primes such that for all $a \geq 0$ there is a supersingular curve of genus $2^ag$ is greater than $0.9999$.
\end{proof}

We note that if we let $g_1,g_2,\ldots,g_k$ in (\ref{ie}) go to infinity, then (\ref{ie}) converges to $(2^k-1)/2^k$. Therefore, if we have arbitrarily many Sophie Germain primes that satisfy the conditions above, we can show that $\limsup_{g \to \infty} \delta_{ss}(g) = 1$.

\begin{definition}
    Let $k$ be a positive integer. A \textit{Cunningham chain of the first kind} of length $m$ is a sequence of prime numbers $\ell_1,\ell_2,\ldots,\ell_m$ such that if $2 \leq i \leq m$, $\ell_i = 2\ell_{i-1}+1$ \cite[Section 3.6]{caldwell}. In particular, $\ell_1,\ell_2,\ldots,p_{m-1}$ are Sophie Germain primes. 
\end{definition}
Let $m \geq 1$ and $\ell > 2^{2m-2}-1$ be a Sophie Germain prime. Assume there exists a Cunningham chain of the first kind of length $2m$ with $\ell_1 = \ell$. Then, if we choose the Sophie Germain primes $\ell_1, \ell_3, \ell_5, \ldots, \ell_{2m-1}$ and relabel them as $g_1,g_2, \ldots, g_m$, then for $1 \leq i,j \leq m$, $g_i \neq 2g_j+1$. Also, as $g_m = 2^{2m-2}g_1+2^{2m-2}-1$ and $\ell > 2^{2m-2}-1$ we have that $g_i \nmid (g_j-1)$.

\begin{conjecture}[Dickson] \label{conjecture:dickson}
    \cite[Conjecture 1.3]{caldwell} If $m \geq 1$, $a_1,a_2,\ldots,a_m$ are integers, $b_1,b_2,\ldots,b_m$ are all integers greater than or equal to $1$, and there is no prime that divides all of $a_1+b_1n, a_2+b_2n,\ldots, a_m+b_mn$ for every $n$, then there are infinitely many $n$ such that all of $a_1+b_1n, a_2+b_2n,\ldots, a_m+b_mn$ are prime.
\end{conjecture}

\begin{proof}[Proof of Theorem \ref{dickson}]
    Let $a_i = 2^{i-1}-1$ and $b_i = 2^{i-1}$ for $1 \leq i \leq 2m$, then $a_1+b_1n = n$ and $a_2+b_2n = 2n+1$. If $p$ divides $n$, then $p$ does not divide $2n+1$. Therefore, Dickson's conjecture implies there are infinitely many Cunningham chains of the first kind of length $2m$.

    \par Since there are infinitely many chains of length $2k$, we can always choose one where the first prime is bigger than $2^{2k-2}-1$. Therefore, if Dickson's conjecture is true, $\limsup_{g \to \infty} \delta_{ss}(g) = 1$. 
\end{proof}

\begin{remark}
    As supersingular curves have an unlikely Newton polygon when $g \geq 9$, Proposition 4.6 and Theorem 4.8 also show that $\limsup_{g \to \infty} \delta_{nu}(g) > 0.999$, and if Dickson's Conjecture is true, $\limsup_{g \to \infty} \delta_{nu}(g) = 1$.
\end{remark}

\subsection{Proof of a Known Result}
As a final remark, we show that Theorem \ref{slopes} gives another proof of a known but significant result.

\begin{theorem}[\protect{\cite[Theorem 13.7]{vdg}}, \protect{\cite[Corollary 3.7(ii)]{blache}}, \protect{\cite[Proposition 1.8.5]{bhm}}]
    Let $p$ be an odd prime, then for arbitrarily large $g$ there exists a supersingular curve of genus $g$ defined over $\overline{\mathbf{F}_p}$.
\end{theorem}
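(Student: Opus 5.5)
The plan is to deduce the statement from Corollary \ref{germain1} or Corollary \ref{germain2}. These produce supersingular curves of genus $ng$ for every admissible $n$. So it suffices, for a fixed odd prime $p$, to find a single prime $\ell>3$ with $\ell\neq p$ satisfying the order hypothesis. After that we only need infinitely many $n$ coprime to $\ell p$ such that the order of $p$ in $(\mathbf{Z}/n\mathbf{Z})^\times$ is coprime to $g=(\ell-1)/2$; the genera $ng$ then grow without bound.

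\textbf{Choice of $\ell$.} First choose $\ell$ so that $p$ has order exactly $2g=\ell-1$ modulo $\ell$, i.e.\ $p$ is a primitive root modulo $\ell$; then Corollary \ref{germain2} applies. Since Artin's conjecture is not available unconditionally, I would avoid primitive roots and instead choose $\ell$ so that $g=(\ell-1)/2$ is small or has controlled prime factors. The simplest attempt is a fixed small $\ell$ such as $\ell=5$ ($g=2$, $\ell\equiv 1 \bmod 4$) or $\ell=7$ ($g=3$). Modulo $5$, the residue class of $p$ has order $1,2$ or $4$. Order $2=g$ ($p\equiv 4 \bmod 5$) puts us in Corollary \ref{germain1}, and order $4=2g$ ($p\equiv 2,3 \bmod 5$) puts us in Corollary \ref{germain2}. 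Modulo $7$, order $3=g$ occurs for $p\equiv 2,4$ and order $6=2g$ for $p\equiv 3,5$. The bad residues are $p\equiv 1 \bmod 5$ together with $p\equiv 1,6 \bmod 7$. For these, I would try further primes: $\ell=11$ (order $5$ or $10$), $\ell=13$ (order $6$ or $12$), and so on. In general I would look for any prime $\ell$ for which $p$ has order $(\ell-1)/2$ or $\ell-1$. Failing a uniform elementary choice, this step may need an existence argument, for instance taking $\ell$ to be a prime divisor of a suitable cyclotomic value $\Phi_q(p)$, where $q$ is a prime. Then $p$ has order $q$ modulo $\ell$, and we would want $\ell=2q+1$. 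This is delicate, so I expect the choice of $\ell$, uniformly in $p$, to be the main obstacle.

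\textbf{Choice of $n$.} Once $\ell$ and $g$ are fixed, take $n=2^a$ for $a\geq 1$. This is coprime to $\ell$ and to $p$ because $p$ is odd. The order of $p$ in $(\mathbf{Z}/2^a\mathbf{Z})^\times$ is a power of $2$. It is therefore coprime to $g$ whenever $g$ is odd, e.g.\ $\ell=7$ or $\ell=11$. If $g$ is even, as for $\ell=5$ or $\ell=13$, I would instead use $n=q^a$ for an odd prime $q\neq p,\ell$ with $\gcd(q-1,g)=1$ and $q\nmid g$. The order of $p$ modulo $q^a$ divides $q^{a-1}(q-1)$, which is then coprime to $g$; for example $q=3$ works when $3\nmid g$.

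\textbf{Conclusion.} With $\ell$ and $n$ chosen this way, the hypotheses of Corollary \ref{germain1} (respectively Corollary \ref{germain2}) hold for every $a$. This yields a supersingular curve of genus $ng$ over $\overline{\mathbf{F}_p}$, and $ng\to\infty$ as $a\to\infty$.
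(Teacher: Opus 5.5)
\textbf{The gap.} The step you flag yourself is never carried out. For an arbitrary odd prime $p$, nothing in your proposal produces a prime $\ell>3$ such that $p$ has order $(\ell-1)/2$ or $\ell-1$ modulo $\ell$. Without such an $\ell$, neither Corollary~\ref{germain1} nor Corollary~\ref{germain2} can be invoked.

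No finite list of candidates can work. By Dirichlet there are primes $p\equiv 1$ modulo the product of the listed $\ell$, and such a $p$ has order $1$ modulo each of them. Your cyclotomic idea amounts to finding a Sophie Germain prime $q$ with $2q+1\nmid p^2-1$; for such $q$ the order of $p$ modulo $2q+1$ is automatically $q$ or $2q$. Finding such a $q$ for every $p$ is equivalent to the (open) infinitude of Sophie Germain primes. In general you are asking for an Artin-type statement: that the subgroup generated by $p$ has index at most $2$ in $\mathbf{F}_\ell^\times$ for some $\ell>3$. That is not a routine step.

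There is also a smaller error in your choice of $n$. Your fallback for even $g$ cannot work, since $\gcd(q-1,g)\geq 2$ for every odd prime $q$ when $g$ is even. Taking $n=p^r-1$ with $\gcd(r,g)=1$ does work: the order of $p$ modulo $n$ is $r$, and $\ell\nmid n$ because the order of $p$ modulo $\ell$ is a multiple of $g\geq 2$ and so does not divide $r$.

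\textbf{Comparison with the paper.} Fixing one $\ell$ and letting $n\to\infty$ is a genuinely different route, and it needs less: one good $\ell$ per $p$ rather than infinitely many. The paper instead takes $n=1$ and lets $\ell\to\infty$ through primes $\ell\equiv 1 \pmod{4p}$. Then $p$ is a nontrivial quadratic residue modulo $\ell$ by reciprocity, and the paper applies Corollary~\ref{germain1}.

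This does not remove your obstacle. It only gives that the order of $p$ divides $g$ and exceeds $1$, whereas Proposition~\ref{orbits} needs the order to equal $g$. For example, $p=3$ and $\ell=13$ satisfy the paper's conditions, but $3$ has order $3$ modulo $13$. For $y^{13}=x(1-x)^9$, the set $\{1,2,3,5,6,9\}$ of $j$ with $f(j)=1$ is the union of the Frobenius orbits $\{1,3,9\}$ and $\{2,5,6\}$, so that curve is ordinary.

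\textbf{A way around it.} An unconditional proof in the same framework avoids order conditions altogether. Let $m=p^k+1$. Since $p^k\equiv -1 \pmod{m}$, every orbit of multiplication by $p$ on $\mathbf{Z}/m\mathbf{Z}$ is stable under $j\mapsto -j$. Moreover $f(j)+f(-j)=1$ whenever $ja_b\not\equiv 0 \pmod{m}$ for all $b$. No such $j$ equals $-j$, since that would force $2f(j)=1$. Hence the Shimura--Taniyama formula gives slope $1/2$ on every relevant orbit. For instance, $y^{m}=x(1-x)$ has ramification type $[m,m,m/2,1]$ and genus $(p^k-1)/2$, which is unbounded in $k$.
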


\begin{proof}
    We need to choose a prime $\ell$ such that $p \not \equiv 1$ modulo $\ell$ and $p$ is a quadratic residue modulo $\ell$. Suppose $\ell \equiv 1$ modulo $4$, then by quadratic reciprocity 
    
    \[\Biggl (\frac{\ell}{p} \Biggr) \Biggl(\frac{p}{\ell} \Biggr) = 1.\]
    
    This shows that $\ell$ must be a quadratic residue modulo $p$, so choose $\ell$ such that $\ell \equiv 1$ modulo $4$ and $\ell \equiv 1$ modulo $p$. As long as we select $\ell$ so that $\ell$ does not divide $p - 1$, $p \not \equiv 1$ modulo $\ell$. Therefore, by Corollary \ref{germain1}, there exists a supersingular curve of genus $\frac{\ell-1}{2}$ defined over $\overline{\mathbf{F}_p}$.
\end{proof}

\bibliography{citations}
\bibliographystyle{alpha}
\end{document}